\numberwithin{equation}{section}
\newcommand\blfootnote[1]{%
 \begingroup
 \renewcommand\thefootnote{}\footnote{#1}%
 \addtocounter{footnote}{-1}%
 \endgroup
}
\newtheorem{theorem}{Theorem}[section]
\newtheorem{lemma}[theorem]{Lemma}
\newtheorem{proposition}[theorem]{Proposition}
\newtheorem{corollary}[theorem]{Corollary}
\theoremstyle{definition}
\newtheorem{example}[theorem]{Example}
\newtheorem{definition}[theorem]{Definition}
\newtheorem{assumption}[theorem]{Assumption}
\newtheorem{remark}[theorem]{Remark}
\newcommand{\Emb}{{\mathbb{E}}}
\newcommand{\Fmb}{{\mathbb{F}}}
\newcommand{\Gmb}{{\mathbb{G}}}
\newcommand{\Hmb}{{\mathbb{H}}}
\newcommand{\Imb}{{\mathbb{I}}}
\newcommand{\Pmb}{{\mathbb{P}}}
\newcommand{\Rmb}{{\mathbb{R}}}
\newcommand{\Zmb}{{\mathbb{Z}}}
\newcommand{\Bmc}{{\mathcal{B}}}
\newcommand{\Dmc}{{\mathcal{D}}}
\newcommand{\Emc}{{\mathcal{E}}}
\newcommand{\Gmc}{{\mathcal{G}}}
\newcommand{\Hmc}{{\mathcal{H}}}
\newcommand{\Jmc}{{\mathcal{J}}}
\newcommand{\Kmc}{{\mathcal{K}}}
\newcommand{\Nmc}{{\mathcal{N}}}
\newcommand{\Pmc}{{\mathcal{P}}}
\newcommand{\Smc}{{\mathcal{S}}}
\newcommand{\Zmc}{{\mathcal{Z}}}
\newcommand{\Nms}{{\mathscr{N}}}
\newcommand{\Pms}{{\mathscr{P}}}
\def\R{{\mathbb R}}
\def\N{{\mathbb N}}
\def\PP{{\mathbb P}}
\def\FF{{\mathbb F}}
\def\P{{\mathcal P}}
\def\X{{\mathcal X}}
\def\Z{{\mathbb Z}}
\def\F{{\mathcal F}}
\def\pspace{(\Omega, {\mathcal F}, \Pmb)} 
\def\fpspace{(\Omega, {\mathcal F}, \FF, \Pmb)}
\newcommand{\filt}{\Fmb}
\newcommand{\filtm}{\F}
\newcommand{\nfilt}{\Gmb}
\newcommand{\nfiltm}{\Gmc}
\newcommand{\hfilt}{\Hmb}
\newcommand{\hfiltm}{\Hmc}
\newcommand{\Xref}{\wh{X}}
\newcommand{\mref}{\wh{\mu}}
\newcommand{\rateref}{\wh{\rate}}
\newcommand{\locrate}{\alt{\rate}}
\newcommand{\wlocrate}{\ov{\rate}}
\newcommand{\irate}{\rate}
\newcommand{\irateref}{\rateref}
\newcommand{\Lambdaref}{\wh{\Lambda}}
\newcommand{\Psiref}{\wh{\Psi}}
\newcommand{\lambdaref}{\wh{\lambda}}
\newcommand{\jmp}[2]{{\rm Disc}_{#2}\left(#1\right)}
\newcommand{\thx}[2]{t_{#1}(#2)}
\newcommand{\vx}[2]{v_{#1}(#2)}
\newcommand{\jx}[2]{j_{#1}(#2)}
\newcommand{\thi}[1]{t_{#1}}
\newcommand{\vi}[1]{v_{#1}}
\newcommand{\ji}[1]{j_{#1}}
\newcommand{\leb}{{\rm Leb}}
\newcommand{\Sm}[1]{\#_{#1}}	
\newcommand{\jmps}{\Jmc}
\newcommand{\cad}{\Dmc}
\newcommand{\Pol}{\Zmc}
\newcommand{\neigh}[1]{\Nmc_{#1}}	
\newcommand{\kneigh}[2]{\Nmc^{#1}_{#2}}	
\newcommand{\gneigh}[2]{\Nmc_{#1}(#2)}	
\newcommand{\kgneigh}[3]{\Nmc^{#1}_{#2}(#3)}	
\newcommand{\cl}[1]{\te{cl}_{#1}}			
\newcommand{\gcl}[2]{\te{cl}_{#1}(#2)}			
\renewcommand{\root}{\o}
\newcommand{\subg}[1]{[#1]}	
\renewcommand{\sp}[1]{[#1]}		
\newcommand{\poiss}{\mathbf{N}}
\newcommand{\pp}{p}				
\newcommand{\rpp}{P}				
\newcommand{\rate}{r}
\newcommand{\rateset}{\mathbf{r}}
\newcommand{\x}{\xi}					
\newcommand{\vtx}{\x}
\newcommand{\vms}{\mathbf{\kappa}}		
\newcommand{\dvms}{\vartheta}		
\newcommand{\mksp}{\Kmc}
\newcommand{\mspacenew}{\widetilde{\Omega}, \widetilde{{\mathcal F}}} 
\newcommand{\filtnew}{\widetilde{\mathbb{F}}}
\newcommand{\pfiltnew}{\widetilde{{\mathcal F}}}
\newcommand{\Pnew}{\widetilde{\mathbb{P}}}
\newcommand{\RNnew}{\rho}
\newcommand{\skipLine}{\vspace{12pt}}
\newcommand{\ov}{\overline}
\newcommand{\te}{\text}
\newcommand{\indp}{\perp\!\!\!\perp}
\newcommand{\indic}[1]{\Imb_{\left\{#1\right\}}}
\newcommand{\ex}[1]{\Emb\left[#1\right]}
\newcommand{\exnd}[1]{\Emb[#1]}		
\newcommand{\exmu}[2]{\Emb^{#1}\left[#2\right]}	
\newcommand{\deq}{\overset{\text{(d)}}{=}}		
\newcommand{\defeq}{:=}
\newcommand{\mdeg}{\alpha}
\newcommand{\borel}{\Bmc}
\newcommand{\wh}[1]{\widehat{#1}}
\newcommand{\ind}{\hspace{24pt}}
\newcommand{\alt}[1]{\widetilde{#1}}
\newcommand{\law}{\te{Law}}
\title{Interacting Jump Processes Preserve Semi-Global Markov Random Fields on Path Space}
\author{Ankan Ganguly and Kavita Ramanan, \\
 Division of Applied Mathematics, Brown University}
\date{}
\begin{document}

\maketitle
\begin{abstract}
Consider a system of interacting particles indexed by the nodes of a graph whose vertices are equipped with marks representing parameters of the model such as the environment or initial data. Each particle takes values in a countable state space and evolves according to a (possibly non-Markovian) continuous-time pure jump process whose jump
intensities depend only on its own state (or history) and marks as well as the
states (or histories) and marks of
particles and edges in its neighborhood in the graph. Under mild conditions on the jump intensities, it is
shown that the trajectories of the interacting particle system exhibit a certain local or semi-global Markov random field property
whenever the initial condition satisfies the same
property.
Our results complement recent works that establish the preservation of a
local second-order Markov random field property for interacting diffusions.
Our proof methodology in the context of jump processes is different, 
and works directly on infinite graphs, thereby bypassing any limiting arguments. Our results apply to models arising in diverse fields including statistical physics, neuroscience, epidemiology and opinion dynamics, and have 
direct applications to the study of 
marginal distributions of interacting particle systems on Cayley trees. 
\end{abstract}
\blfootnote{\emph{2000 Mathematics Subject Classification.} Primary: 60K35, 60J74, 60J80; Secondary: 60K25;}
\blfootnote{\emph{Key words and phrases.} Interacting particle systems; jump processes; Poisson random measures; Markov Random Field; Point processes}
\blfootnote{This material is based upon work supported in part by the
Vannevar Bush Faculty Fellowship ONR-N0014-21-1-2887 and 
the 
United States Army Research Office under grant number W911NF2010133. }

\section{Introduction}
\label{intro}

A pure jump interacting particle system (IPS) describes a collection of randomly evolving particles indexed
by the nodes of an underlying graph, where the dynamics of each particle in the collection is described by a pure jump process on a discrete state space, with jump rates depending not only on its own state but
also on the states of neighboring particles in the graph. 
Commonly studied IPS include the voter model, contact process and Glauber dynamics for various
statistical physics spin models like the Ising and Potts models \cite{Lig85}, as well as many other models
arising in engineering and operations research (see \cite{GanRam-Hydro22} for a list of references).
Several works over the last two decades have shown that such IPS do not preserve Gibbsianness \cite{Vanetal02,KisKul20,Kul19,Vanetal12, Vanetal10, JahKul17,FerHolMar13}. In fact, as demonstrated in \cite{Vanetal02,Vanetal12,KisKul20,Vanetal10,FerHolMar13}, even if the initial states of particles form a Markov random field (MRF) with respect to the underlying interaction graph,
the collection of states of all particles at some future time may fail to form a Markov random field (MRF)
of any order (with respect to the same graph). In fact, this can occur even if the IPS is ergodic with a stationary distribution that is an MRF (see, e.g. \cite{Vanetal02}). As is well known, given a locally finite graph $G = (V,E)$ and a Polish space $\Pol$, the $\Pol^V$-valued random element $Z$ is said to form a (local) MRF if for any disjoint partition $A,B,S$ of $V$ such that $S$ is equal to $\gneigh{A}{G}$, the neighborhood of the finite set $A$ in $G$,
\begin{equation}
Z_A\indp Z_B|Z_S.
\label{intro:MRFprop}
\end{equation}
Furthermore, for any $\mdeg \in \N$, $Z$ is said to form an $\mdeg$-MRF if instead $S$ is equal to $\kgneigh{\mdeg}{A}{G}$, the $\mdeg$-neighborhood of the finite set $A$ in $G$ (which is the set of vertices in $A^c$ that lie at a distance of at most $\mdeg$ from $A$) above. On infinite graphs, one can also consider global MRFs: an MRF or $\mdeg$-MRF is said to be \emph{global} if \eqref{intro:MRFprop} holds even when $A$ is infinite. Instead, we introduce the intermediate notion of a semi-global MRF (SGMRF), which will turn out to be more relevant for our purposes. An SGMRF or $\mdeg$-SGMRF is defined analogously to the MRF (respy. $\mdeg$-MRF) property except that \eqref{intro:MRFprop} must hold even for infinite $A$ whose $\mdeg$-neighborhood $S$ is finite. In this article we show (under general conditions on the jump intensities and the interaction graph) that
if the initial states
form an $\mdeg$-MRF (respy. $\mdeg$-SGMRF) with $\mdeg \geq 2$, then the {\em trajectories} also form an $\mdeg$-MRF (respy. $\mdeg$-SGMRF).
In particular, this shows that
there is preservation of the $\mdeg$-MRF and $\mdeg$-SGMRF properties at the level of trajectories, even if
not at the level of states. 
In addition, we also show this in general fails to hold if $\alpha = 1$ (see Example \ref{res:1-MRF}).

\ind The definition of an $\mdeg$-SGMRF is a natural extension of the definition of a ``Markov chain on a tree,'' as stated in \cite[Section 2]{Zac83} and \cite[Chapter 12]{Geo11}, to higher-order Markov chains and general graphs (see Appendix \ref{MRFalt} for further discussion). An important motivation for establishing the second-order
SGMRF property is that it can be used to obtain autonomous descriptions of
marginal dynamics for IPS on trees as unique solutions to certain associated local equations \cite[Chapter 6]{Gan22}.
As shown in \cite[Theorem 4.3 and Corollary 4.7]{GanRam-Hydro22},
the local equations describe the limit of both the neighborhood empirical measure
as well as the marginal dynamics
at the root of IPS on sequences of uniformly rooted random regular graphs whose sizes
grow to infinity, much in the spirit of mean-field limits for IPS on complete graphs \cite{Oel84}.
The MRF property by itself is insufficient for such a characterization (as also observed in \cite{LacRamWuLE21} in the context of diffusions).

\ind
Our results in fact apply to a far more general class of IPS characterized as solutions to Poisson-driven stochastic differential equations (SDEs) that may be non-Markovian or heterogeneous. Non-Markovian dynamics are crucial to model a variety of applications in
neuroscience, epidemiology and engineering,
and heterogeneities arise naturally in many settings, including load balancing models \cite{AghRam19,Fonetal21,TodTru19}. We capture heterogeneities in the dynamics by equipping the interaction graph with
(possibly random) marks on the vertices, which
specify the initial states and/or
initial histories of the particles, as well as heterogeneities in the dynamics,
random environments and asymmetries in the local interactions with respect to the neighboring particles.
The jump rates of
each particle are allowed to depend on the histories of neighboring particles as well as the marks
of vertices in the neighborhood (a precise model description is given in Section \ref{mod}).
When the rates satisfy some mild regularity conditions (stated in Assumptions \ref{mod:regular} and
\ref{mod:WPbd}), our main result (Theorem \ref{res:MRF}) shows that if the random marks form an MRF or SGMRF of order
$\alpha \geq 2$, then the trajectories of the IPS also exhibit the same MRF property.

\ind To the best of our knowledge, this article is the first exploration of MRF properties of trajectories of IPS described by jump processes. However, there exist results of a similar flavor in the context of diffusions (see \cite{Deu87,DerRoe17,CatRoeZes96,LacRamWuMRF21} and references therein). Specifically, Theorem 2.7 of \cite{LacRamWuMRF21} establishes conditions under which trajectories of interacting diffusions preserve the second-order {\em local}
MRF property. Our results generalize those of \cite{LacRamWuMRF21} in the jump process context by considering higher-order MRFs, as well as SGMRFs
rather than just MRFs, and 
weakening assumptions on the initial data, allowing for more general initial data than the initial state of the process.
Specifically, unlike in \cite{LacRamWuMRF21}, we do not require that the initial conditions be absolutely continuous with respect to any product measure. This is of particular interest in the study of marginals of stationary Markov processes, as well as non-Markov processes, for which the initial data includes the history of the process before time zero. 
These ``infinite histories'' are typically highly singular so that even on finite graphs, the initial data will typically fail to be absolutely continuous with respect to any product measure.

\ind
Despite some similarity in the results, it is worth emphasizing that our proof technique differs from that used for diffusions in \cite{LacRamWuMRF21}. In the latter work, the trajectories of interacting diffusions are first shown to preserve the 2-MRF property on finite graphs, and then a limiting argument
is used to extend to infinite graphs. This argument exploits the fact that interacting diffusions on infinite graphs arise as local weak limits of interacting diffusions on finite graphs \cite[Theorem 3.7]{LacRamWuLWC22}, and shows that the
$2$-MRF property is preserved along suitably constructed convergent sequences. This approach requires one to impose certain assumptions about the continuity of the dynamics with respect to the initial condition (to ensure the aforementioned local weak convergence).
In contrast, we directly prove our main result for IPS on infinite graphs without invoking of limiting arguments. A brief outline of our approach, which allows us to handle both the MRF and SGMRF properties in a unified manner, is as follows. First, given an IPS
we construct an associated sequence of reference processes on infinite graphs, whose (initial data and) trajectories satisfy a certain conditional independence property that is akin to an MRF property. We then show that the law of the IPS is absolutely continuous with respect to that of each of the reference processes and use the form of the Radon-Nikodym derivative to deduce the MRF or SGMRF property
of the IPS from the conditional independence properties of the reference processes. An intermediate step in this process that may be of independent interest is an infinite-dimensional Girsanov theorem for IPS on possibly non-locally finite graphs even when the defining SDE may have multiple weak solutions (see Proposition \ref{rn:girs}). The proof of this proposition proceeds by first establishing a duality between the IPS and a point process and then applying extensions of standard results for non-explosive point processes to the explosive marked point process setting to deduce the resulting Radon-Nikodym derivative.

\ind In Section \ref{nota} we establish basic definitions and notations that will be in use throughout the article. In Section \ref{mod-main}, we introduce the model, state our main results and provide certain counterexamples that suggest our results cannot, in general, be reasonably strengthened. In Section \ref{MRF}, we prove the main result taking for granted the absolute continuity of the IPS with respect to the reference processes and the form of the associated Radon-Nikodym derivatives. The latter are derived in Section \ref{sec:rn:rnd}. In Appendix \ref{ap-condind}, we prove a technical lemma that is used to describe the conditional structure of the reference processes. In Appendix \ref{MRFalt} we provide an alternate characterization of the SGMRF property and derive associated properties. In Appendix \ref{Ap:dual} we show that the point process dual is a well-defined property. Lastly, in Appendix \ref{WP}, we apply the results of \cite{GanRam-Hydro22} to prove that the the sequence of reference IPS is well defined under the conditions we impose upon it.

\section{Preliminaries and Notation}
\label{nota}

For any real numbers $a,b\in \R$, we write $a\wedge b \defeq \min\{a,b\}$ and $a \vee b \defeq \max\{a,b\}$.

\skipLine

\textbf{Graph Notation: } Given a set $A$, let $|A|$ denote its cardinality. Let $G \defeq (V,E)$ represent a graph, with countable vertex set $V$ and edge set $E$. Graphs are always assumed to be simple (i.e., they do not have self-loops or multi-edges) and undirected. For $u, v \in V,$ a path between $u$ and $v$ in $G$ is defined to be a sequence of vertices $u = v_0, v_1,\dots,v_{n-1},v_n = v$ for some $n \in \N_0$ such that for all $i \in\{1,\dots,n\}$, $\{v_{i-1},v_i\} \in E$ and $v_i \neq v_j$ whenever $i \neq j$ except possibly when $(i,j) = (0,n)$, in which case the path is said to be a cycle. The length of the path is the number of edges in the path. Let $d_G(u,v)$ denote the usual graph distance, which is the length of the shortest path between $u$ and $v$ in $G$. If there are no paths between $u$ and $v$, then $d_G(u,v) = \infty$. Note that for $v \in G$, the sequence $\{v_0 = v\}$ is a path of length 0 so that $d_G(v,v) = 0$.

\ind For any subset $U \subseteq V$, let $\neigh{U} \defeq \gneigh{U}{G}\defeq\{v \in V\setminus U: \{u,v\} \in E \te{ for some }u \in U\}$ denote the neighborhood of $U$ in $G$ and let $\cl{U}\defeq \gcl{U}{G} \defeq U\cup\neigh{U}$
denote its closure. Also, for $\mdeg \in \N$, the set $\kneigh{\mdeg}{U} \defeq \kgneigh{\mdeg}{U}{G} \defeq \{v \in V\setminus U: \min_{u \in U}d_G(u,v) \leq \mdeg\}$ denotes the $\mdeg$-neighborhood of $U$.
If $U = \{v\}$ is a singleton and the graph is clear from the context, then we write $\neigh{v}\defeq \gneigh{v}{G} \defeq \gneigh{\{v\}}{G}$, $\kneigh{\mdeg}{v}\defeq \kgneigh{\mdeg}{v}{G} \defeq \kgneigh{\mdeg}{\{v\}}{G}$ and $\cl{v}\defeq \gcl{v}{G} \defeq \gcl{\{v\}}{G}$. The degree of a vertex $v$ is equal to $|\neigh{v}|$, the graph $G$ is said to be locally finite if each of its vertices has finite degree and
the graph $G$ is said to be of bounded degree if $\sup_{v \in V} |\neigh{v}| < \infty$. Unless otherwise specified, all graphs are assumed to be locally finite. On occasion, we may slightly abuse notation by writing $v \in G$ to mean $v \in V$. For $U \subseteq V$, let
$G\subg{U}$ denote the induced subgraph of $G$ on $U$, that is, $G\subg{U} \defeq (U,E\subg{U})$ with $E \subg{U} \defeq \{ \{u,v\} \in E: u,v \in U\}.$

\skipLine

\textbf{Path Space Notation:} Given any countable index set $U$ and Polish space $\Pol$, let $\Pol^U = \{(z_v)_{v \in U}: z_v \in \Pol\te{ for all } v \in U\}$
denote the corresponding configuration space, equipped with the product topology. 
For any $z \in \Pol^V$, $z_U \in \Pol^U$ denotes the restriction of $z$ to $\Pol^U$, that is, $z_U = (z_v)_{v\in U}$. We consider IPS with a countable state space $\X$, which we identify with a subset of the integers $\Z$ equipped with the discrete topology.
Given $U\subseteq V$ and a (closed, half-open or open) interval $I \subseteq [0,\infty)$, let $\cad(I,\X^U)$ denote the space of c\`adl\`ag functions from $I$ to $\X^U$. Given 
$0 < t < \infty$, and $I = [0,t]$ or $I = [0,t)$, 
for conciseness denote $\cad(I,\X^U)$ by $\cad^U_t$ or $\cad^U_{t-}$, respectively. 
Also, set $\cad^U \defeq \cad([0,\infty),\X^U)$
and omit the superscript $U$ from the notation when $|U| = 1$. If $x \in \cad^U$ and $v\in U$, then $x_{v}(t)$ denotes the value of the $v$th component of $x$ at time $t \geq 0$. For any $t \geq 0$, the restrictions of $x$ to $[0,t]$ and $[0,t)$ are respectively denoted by $x[t]\in \cad^U_t$ and $x[t)\in \cad^U_{t-}$. Also, set $\Delta x(t) \defeq x(t) - x(t-)$. For $0\leq s < \infty$, an interval $I \subseteq [0,\infty)$, finite $U\subseteq V$ and $x \in \cad(I,\X^U)$, define the set of jump times as follows:
\begin{equation}
\label{nota:disc}
\jmp{x}{s} \defeq \{s' \in I\cap (0,s]: x(s') \neq x(s'-)\}, 
\end{equation}
and for $x \in \cad^U$, set $\jmp{x}{} := \cup_{s \in (0,\infty)} \jmp{x}{s}$. 
For $t \in [0,\infty]$, $\cad^U_t$ is equipped with the product J1 topology, under which $\cad^U_t$ is a Polish space \cite[Theorems 12.2 and 16.3]{Bil99}. Note that for $\{x_n\} \subseteq \cad^U$, $x_n \to x$ if $x_n[t] \to x[t]$ for every $t \notin \jmp{x}{}$. Next, for any fixed $t \in \R_+$ and any strictly increasing locally Lipschitz function $\psi: [0,t)\to [0,\infty)$ with a locally Lipschitz inverse (e.g., $\psi(s) \defeq \frac{1}{t-s}-\frac{1}{t}$), the bijection $\cad^U_{t-}\ni x \to x\circ\psi^{-1}\in \cad^U$ induces the J1 topology on $\cad^U_{t-}$.
This can be used to show that $\cad^U_{t-}$ is also Polish under the J1 topology.

\skipLine

\textbf{Measure Notation:} For any Polish space $\Pol$, let $\borel(\Pol)$ be the Borel $\sigma$-algebra on $\Pol$, and let $\Pmc(\Pol)$ be the space of probability measures on $(\Pol,\borel(\Pol))$ equipped with the topology of weak convergence. Given $U \subseteq V$ and $\eta \in \Pmc(\Pol^V)$, let $\eta[U]$ be the marginal distribution of $\eta$ restricted to $\Pol^U$. Given any $\eta\in \Pmc(\Pol)$ and a $\Pol$-valued random element $Z$, we say $Z \sim \eta$ if the distribution of $Z$ is given by $\eta$. We write $Y\deq Z$ if $Y$ and $Z$ have the same distribution, and write $Y_1\indp Y_2$ (respy. $Y_1\indp Y_2|Y_3$) if $Y_1$ and $Y_2$ are independent (respy. conditionally independent given $Y_3$). 
If $\eta \in \Pmc\left(\cad(\Rmb_+,\Pol)\right)$ for some Polish space $\Pol$, then $\eta_t\in \Pmc\left(\cad([0,t],\Pol)\right)$
and $\eta_{t-}\in \Pmc\left(\cad([0,t),\Pol)\right)$ denote the restrictions of $\eta$ to the respective Borel $\sigma$-algebras $\borel(\cad([0,t],\Pol))$ and $\borel(\cad([0,t),\Pol))$.

\ind A filtration is said to satisfy the usual conditions if it is complete and right-continuous. Unless otherwise stated, all filtrations
are assumed to be augmented so as to satisfy the usual conditions. Filtrations will typically be represented by the letters $\filt,\nfilt$ and $\hfilt$, indexed by $\R_+$ or $[0,T]$ and for each $t \in \R_+$, the corresponding $\sigma$-algebras will be denoted by $\filtm_t,\nfiltm_t,\hfiltm_t$ respectively. Given a filtration $\nfilt \defeq\{\nfiltm_t\}_{t\in\R_{+}}$, recall that 
a simple sufficient condition for a process $Z$ to be $\nfilt$-\emph{predictable} is that $t\mapsto Z_t$ is almost surely left-continuous and $\nfilt$-adapted. Given two filtrations $\filt$ and $\nfilt$, as usual $\filt\vee\nfilt \defeq (\filtm_t\vee \nfiltm_t)_{t \in \R_+}$ denotes the smallest filtration containing both $\filt$ and $\nfilt$. Given a random element $\zeta$, we use $\hfiltm^\zeta$ to denote the completion of the $\sigma$-algebra generated by $\zeta$ (with respect to a probability measure that will be expressed explicitly if not clear from the context), and for any c\`adl\`ag stochastic process $Z$, we define $\hfilt^Z \defeq \{\hfiltm^Z_t\}_{t \in \R_+}$ to be the smallest filtration satisfying the usual conditions such that $Z$ is adapted to $\hfilt^Z$. For all processes $Z$ considered in this paper, $\hfilt^Z$ will be equal to the completion of the natural filtration of $Z$ so that for all $t \in \R_+$, $\hfiltm^Z_t = \hfiltm^{Z[t]}$; see the discussion in \cite[page 357]{DalVer08} for more details. 

\skipLine

\textbf{Poisson Point Processes:} Let $\Pol$ be a Polish space equipped with its Borel $\sigma$-algebra
and a metric $d_{\Pol}$ that induces the Polish topology. On intervals $I\subseteq \R_+$ and on countable spaces (which will be assumed to have an implicit embedding in $\N$), this will be the standard absolute difference metric, and on c\`adl\`ag spaces it will be the J1 metric. Finally, if $\Pol \defeq \prod_{i \in I}\Pol_i$ for some finite index set $I\subseteq\N$ with metrics $d_{\Pol_i}$, $i \in I$, then we set $d_{\Pol}(z,z') = \sum_{i \in I}d_{\Pol_i}(z_i,z'_i)$. Let $\Nms(\Pol)$ be the space of locally finite, nonnegative integer-valued measures, that is, for any $\pp \in \Nms(\Pol)$ and $A \in \borel(\Pol)$, $\pp(A) \in \N_0\cup\{\infty\}$ and $\pp(A) < \infty$ for every $A$ that is bounded with respect to $d_{\Pol}$.
We equip $\Nms(\Pol)$ with the weak-hash topology, which then makes it a Polish space 
\cite[page 2 and Proposition 9.1.IV(iii)]{DalVer08}, \cite{Mor18}.
Also, note that the map $\Nms(\Pol) \ni p\mapsto p(A)$ is Borel-measurable for any $A \in \borel(\Pol)$. 

\ind Let $\eta$ be any nonnegative, locally finite Borel measure on $\Pol$, that is, $\eta(B)<\infty$ for every bounded set $B \in\borel(\Pol)$. A \emph{Poisson point process} $\poiss$ on $\Pol$ with \emph{intensity measure} $\eta$ is a point process on
$\Pol$ such that for any disjoint $A,B \in \borel(\Pol)$, $\poiss(A)$ is Poisson distributed with expectation $\eta(A)$ and $\poiss(A)\indp \poiss(B)$.
A Poisson process $\poiss$ is said to be a $\nfilt$-Poisson point process on $\wh{\Pol}\defeq \R_+\times\Pol$ if for every $t \geq 0$, $A \in \borel([0,t]\times \Pol)$ and $B \in \borel((t,\infty)\times \Pol)$, $\poiss(A)$ is $\nfiltm_t$-measurable and $\poiss(B)\indp \nfiltm_t$. We let $\hfilt^{\poiss}$ denote the minimal filtration satisfying the usual conditions such that $\poiss$ is an $\hfilt^{\poiss}$-Poisson process.

\section{Model Description and Main Results}
\label{mod-main}

\subsection{Model Description and Assumptions}
\label{mod}

We consider IPS in which each particle takes
values in a countable state space ${\mathcal X}$ (viewed without loss of generality as a subset of $\Zmb$) and has state 
transitions that lie in some finite jump set 
$\jmps \subseteq \{i - j: i,j \in \X, i \neq j\}$. We restrict consideration to the case $|\jmps|<\infty$ because this setting leads to simpler and more transparent expressions and 
seems to cover most examples of interest, although it is straightforward to generalize our results to the case of a countable jump set $\jmps$. The data specifying the model consists of a deterministic (simple, locally finite, undirected) graph $G = (V,E)$ that encodes the interaction structure, the initial data $\vms \in \mksp^V$, where $\mksp$ is a Polish space, 
and a family of jump rate functions 
$\rate^v_j:\R_+\times (\mksp\times\cad)^{V} \to\R_+, j \in \jmps, v \in V$, that specifies the dynamics, where $\cad$ is the space of c\`{a}dl\`{a}g functions taking values in $\X$ (using the notation from Section \ref{nota}). The initial data $\vms$ can not only capture the initial state at time zero (for a Markov process) or history before time zero (for a non-Markovian process), it can also be used to encode other state parameters of the model such as random environments and heterogeneities in particle dynamics (see \cite[Section 4.4]{GanRam-Hydro22} for concrete examples). 
We assume interactions between particles are local (with respect to the graph $G$), predictable and with regular paths,
as encapsulated in the following assumption.
\begin{assumption}
\label{mod:regular}
The family of rate functions 
$\rateset \defeq \{\rate^{v}_j\}_{v \in V,j \in \jmps}$ consists of Borel measurable functions from $\R_+ \times (\mksp\times\cad)^{V}$ to $\R_+$ that satisfy the following three conditions:
\begin{enumerate}
\item (locality) 
for every $v \in V$ and $j \in \jmps$, there exists a function $\locrate^v_j: \R_+\times (\mksp\times\cad)^{\cl{v}}$ such that for every $(t,\dvms,x) \in \R_+\times(\mksp\times\cad)^V$,
\[ \rate^v_{j}(t,\dvms,x) = \locrate^v_j(t,\dvms_{\cl{v}},x_{\cl{v}})
\qquad \mbox{ for all } (t,\dvms,x) \in \R_+\times(\mksp\times\cad)^V; \] 
\item (predictability) 
for every 
$\dvms \in \mksp^{V}$, $t > 0$, $v \in V$ and $x, y \in \cad^{V}$, 
\[ x(s) = y(s):\quad s \in [0,t) \quad \Rightarrow \quad \rate^v_j (t, \dvms,x) = \rate^v_j (t, \dvms,y); 
\]
\item (regularity) for every $v \in V, j\in \jmps$ and $(\dvms,x) \in (\mksp\times\cad)^V$, the map $(0,\infty)\ni t \mapsto \rate^v_j(t,\dvms,x) \in \R_+$ is c\`agl\`ad.
\end{enumerate}
\end{assumption} 

\ind
In what follows, $\leb$ is Lebesgue measure on $\R_+$ and $\Sm{\jmps}$ is the counting measure on $\jmps$. 
\begin{definition}
\label{mod:drive}
The \emph{solution space} associated with the $\mksp^V$-valued (random) initial data $\vms$ 
is a complete filtered probability space $\fpspace$ with $\filt$ satisfying the usual conditions, which supports the initial data $\vms$ with $\filtm_0 \supseteq \sigma (\vms)$, and a collection of i.i.d. $\filt$-Poisson processes $\poiss \defeq \{\poiss_v\}_{v \in V}$ on the space $\R^2_+\times\jmps$ with intensity measure $\leb^2\otimes \Sm{\jmps}$, referred to as \emph{driving Poisson processes}. 
\end{definition}

\ind 
Given the solution space $\fpspace$, initial data $\vms$ and jump rate function family $\rateset \defeq \{\rate^v_j\}_{v \in V,j\in\jmps}$ that satisfy Assumption \ref{mod:regular}, we now describe the associated IPS $X$ as a solution to the following Poisson-driven SDE: 
\begin{equation}
\label{mod:infpart}
X_v(t) = X_v(0) + \int_{(0,t]\times \R_+\times \jmps} j\indic{r\leq \rate^{v}_{j}(s,\vms,X)}\,\poiss_{v}(ds,dr,dj),\quad v \in V,t \in [0,\infty). 
\end{equation}
Note that Assumption \ref{mod:regular} implies that $(s,\vms,X) \mapsto \rate^{v}_{j}(s,\vms,X)$ only 
depends on $X$ and $\vms$ only via $X_{\cl{v}}[s)$ and $\vms_{\cl{v}}$.
Since, as mentioned above, the initial data $\vms$ may contain more than the initial state, we will find it convenient to express the latter as a Borel measurable function $\vtx: \mksp \to \X$ of the initial data:
\begin{equation}
\label{ic}
X_v(0) := \vtx (\vms_v), \quad v \in V.
\end{equation}
We call $\vtx$ the \emph{initial condition map}, and we will refer to $(\vms,\vtx)$ as the initial data pair.

\begin{definition}
\label{mod:WP} 
A \emph{weak solution} to the SDE \eqref{mod:infpart}-\eqref{ic} for the initial data $\vms$ is a $\filt$-adapted c\`adl\`ag stochastic process $X$ defined on an associated solution space $\fpspace$ 
that satisfies \eqref{mod:infpart}-\eqref{ic} almost surely. The SDE \eqref{mod:infpart}-\eqref{ic} is said to be \emph{strongly well-posed} for the initial data $\vms$ if on any solution space $\fpspace$ associated with $\vms$, 
there exists a weak solution $X$ to \eqref{mod:infpart}-\eqref{ic} for the initial data $\vms$ and the SDE \eqref{mod:infpart}-\eqref{ic} is pathwise unique in the sense that 
given any other weak solution $Y$ to \eqref{mod:infpart}-\eqref{ic} on the same solution space (and hence, with the same driving Poisson processes and initial data) it follows 
that $X = Y$ almost surely. 
\end{definition}

\ind 
Our main result holds under the following mild condition that the jump rate functions at a vertex
satisfy a certain degree-dependent bound. 
\begin{assumption}
\label{mod:WPbd}
There exists a function $C: \N\times \R_+\to \R_+$ that is non-decreasing in each of its arguments and such that for any $v \in V,j \in \jmps,k\in \N$ and $t \in \R_+$, $\rate^v_j(t,\cdot,\cdot) \leq C(|\cl{v}|,t)$. 
\end{assumption}

\begin{remark} 
\label{rem:WP}
Most IPS of interest satisfy
Assumptions \ref{mod:regular} and \ref{mod:WPbd} (e.g., see \cite[Section 4.4]{GanRam-Hydro22}). 
As shown in Lemma \ref{lem-ass}, under these assumptions it follows from 
\cite[Theorem 4.2, Propositions 5.15 and 5.17]{GanRam-Hydro22} that 
strong well-posedness of the IPS holds on a very large class of graphs that includes bounded degree graphs
and almost sure realizations of Galton-Watson trees
whose offspring distributions have finite first moments. Thus, the main result, Theorem \ref{res:MRF}, in this article pertains to the case in which the SDE \eqref{mod:infpart}-\eqref{ic} is strongly well-posed.
However, the change of measure result in Proposition \ref{rn:girs} and the duality result in
Proposition \ref{duo:duality} are established under more general assumptions that do not require \eqref{mod:infpart}-\eqref{ic} to be well-posed.
\end{remark}

\begin{remark}
\label{mod:whystrong}
The description of a weak solution in Definition \ref{mod:WP} corresponds to a $(\filt,\poiss)$-weak solution in
\cite[Remark 3.5]{GanRam-Hydro22}. However, when we consider settings in which the SDE \eqref{mod:infpart}-\eqref{ic} is strongly well-posed, it follows from 
\cite[Lemma 3.10]{GanRam-Hydro22} that every such weak solution is necessarily also \emph{strong} in the sense that it is adapted to the filtration $\hfiltm^{\vms}\vee\hfilt^\poiss$. Given this equivalence, we omit the qualifier ``weak'' or ``strong'' for solutions to \eqref{mod:infpart}-\eqref{ic} when the SDE is strongly well-posed. 
\end{remark}

\ind
Unless otherwise specified, we use $\fpspace$ and $\poiss$ to denote the solution space and driving Poisson processes of \eqref{mod:infpart}-\eqref{ic}. 

\subsection{Main Result and Counterexamples}
\label{res:res}

Our main result shows that the trajectories of the IPS propagate certain MRF properties. To state these precisely, recall the definition of MRF, $\mdeg$-MRF, SGMRF and $\mdeg$-SGMRF from Section \ref{intro}. 
\begin{theorem}
\label{res:MRF}
Suppose the jump rate function family $\rateset \defeq \{\rate^v_j\}_{v \in V, j\in\jmps}$ satisfies Assumptions \ref{mod:regular} and \ref{mod:WPbd}, and $G$ is either a graph of bounded degree or
an a.s. realization
of a Galton-Watson tree whose offspring distribution has a finite first moment. Given an initial data pair $(\vms,\vtx)$, let $X = X[\infty)$ be a solution to \eqref{mod:infpart}-\eqref{ic}.
If $\vms$ forms an $\mdeg$-MRF (respy. $\mdeg$-SGMRF) with respect to $G$ for some integer $\mdeg \geq 2$, then for each $t \in (0,\infty]$, $(\vms,X[t))$ forms an $\mdeg$-MRF (respy. $\mdeg$-SGMRF) with respect to $G$.
\end{theorem}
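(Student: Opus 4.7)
The plan is to fix $t \in (0,\infty]$ and a disjoint partition $V = A \sqcup S \sqcup B$ with $S = \kgneigh{\mdeg}{A}{G}$, where $A$ is finite in the MRF case and only $S$ need be finite in the SGMRF case. The objective is to show, under either hypothesis on $\vms$, the conditional independence
\[
(\vms_A, X_A[t)) \indp (\vms_B, X_B[t)) \mid (\vms_S, X_S[t)).
\]
The decisive geometric observation, available precisely because $\mdeg \geq 2$, is that $S$ splits as $S = S_A \sqcup S_B$ with $S_A \defeq \{v \in S : d_G(v, A) \leq \mdeg - 1\}$ and $S_B \defeq \{v \in S : d_G(v, A) = \mdeg\}$, in such a way that $\gcl{v}{G} \subseteq A \cup S$ for every $v \in A \cup S_A$ while $\gcl{v}{G} \subseteq S \cup B$ for every $v \in B \cup S_B$. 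Via the locality property of Assumption \ref{mod:regular}, this implies that the rates driving the ``$A$-side'' particles depend only on $(\vms_{A \cup S}, X_{A \cup S})$ while those driving the ``$B$-side'' particles depend only on $(\vms_{S \cup B}, X_{S \cup B})$; this buffering collapses when $\mdeg = 1$, consistent with Example \ref{res:1-MRF}.

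Next, I would introduce a reference IPS $\wh{X}$ on the same solution space $\fpspace$ driven by the same Poisson processes $\poiss$, but with jump rates $\wh{r}^v_j$ chosen so that each $\wh{X}_v$ is determined by $\vms_v$ and $\poiss_v$ alone (the simplest choice being strictly positive constants independent of everything). Such a system is trivially well-posed, its components $\{\wh{X}_v\}_{v \in V}$ are mutually independent conditional on $\vms$, and combining this with the $\mdeg$-MRF (resp.\ $\mdeg$-SGMRF) property of $\vms$ yields, by a direct tower-property calculation, the analogous conditional independence under the reference law $\wh{P}$:
\[
(\vms_A, \wh{X}_A[t)) \indp (\vms_B, \wh{X}_B[t)) \mid (\vms_S, \wh{X}_S[t)).
\]

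To transfer this conditional independence to the IPS law $P$, I would apply Proposition \ref{rn:girs} to express $dP/d\wh{P}$ (restricted to the $\sigma$-algebra generated by $\vms$ and the trajectories on a finite window of vertices containing $A \cup S$) as a Girsanov-type density $\rho$ that is a product of vertex-local factors $\rho_v$, with each $\rho_v$ depending only on $r^v_j$, $\wh{r}^v_j$ and on $(\vms_{\gcl{v}{G}}, X_{\gcl{v}{G}}[t))$. The $(S_A, S_B)$ decomposition then produces the key factorization $\rho = \rho_{A^+} \cdot \rho_{B^+}$, where $\rho_{A^+} \defeq \prod_{v \in A \cup S_A} \rho_v$ is $\sigma(\vms_{A \cup S}, X_{A \cup S}[t))$-measurable and $\rho_{B^+} \defeq \prod_{v \in B \cup S_B} \rho_v$ is $\sigma(\vms_{S \cup B}, X_{S \cup B}[t))$-measurable. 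The elementary observation that a density of the form $f(U, Z)\, g(W, Z)$ preserves conditional independence of $(U,Z)$ and $(W,Z)$ given $Z$ then yields the corresponding property under $P$, completing the proof.

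The principal obstacle I anticipate is making the change-of-measure argument genuinely rigorous on the infinite graph, since a global density $dP/d\wh{P}$ on the full path space $\cad^V$ need not exist. My plan is to first restrict to marginal laws on a growing finite exhaustion $V_n \uparrow V$ containing $A \cup S$, verify the factorization of $\rho$ at each finite level using Assumption \ref{mod:WPbd} and the integrability afforded by Proposition \ref{rn:girs}, and then promote the resulting conditional independence to $V$ by a monotone-class or martingale argument. The SGMRF case is reduced to the MRF case by the same scheme because $S$ is finite by hypothesis, but will presumably require an extra layer of approximation (exhausting $A$ by finite subsets whose $\mdeg$-neighborhoods stay inside $S$); this is consistent with the introduction's reference to a \emph{sequence} of reference processes rather than a single one. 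Care will also be needed to ensure that the chosen reference rates $\wh{r}^v_j$ stay away from zero wherever $r^v_j$ can be positive, so that $P \ll \wh{P}$ with a well-behaved density.
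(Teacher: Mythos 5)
The overall architecture of your proposal is aligned with the paper's (reference process, locality-driven product factorization of the Girsanov density, transfer of conditional independence via a Put--Schwartz-type lemma), and the $(S_A, S_B)$ decomposition is correct and essentially interchangeable with the paper's split along $\cl{A}$ versus $V\setminus \cl{A}$. However, there is a genuine internal inconsistency in your choice of reference process that creates a gap the rest of your plan does not close.

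You propose a reference process $\wh{X}$ with \emph{constant} rates at \emph{every} vertex, so that the trajectories are conditionally independent given $\vms$ and the analogue of the desired MRF property is immediate under $\wh{P}$. The problem is that this reference process does not admit a Radon--Nikodym derivative with respect to the law of $X$ on the full path space $(\mksp\times\cad)^V$: the Girsanov density would be an infinite product of vertex-local factors over all of $V$, which generically diverges or vanishes. Proposition \ref{rn:girs}, which you invoke, is stated precisely for a reference process whose rates are modified \emph{only on a finite set $W$} and kept equal to the original rates elsewhere, so that the density is a finite product over $W$. These two things cannot be had simultaneously: the reference process you describe and the proposition you cite are mutually exclusive. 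Your proposed remedy -- restricting to the $\sigma$-algebra of trajectories on a finite window -- also does not repair the argument. First, the marginal density on a window is obtained by conditioning the global density, and this conditional expectation averages over boundary trajectories, destroying the product factorization you need for Lemma \ref{lem:PutSch}. Second, in the SGMRF case $A$ is allowed to be infinite, so there is no finite window containing $A\cup S$, and the corresponding conditional-independence statement involves the infinite set $B$ in any case.

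The paper resolves this by using reference processes $\Xref^n$ that have constant rates only on a finite ball $V_n$ and retain the original rates outside, so that $d\mu/d\mref^n$ exists globally and is a finite product over $V_n$. The price is that the conditional independence of $(\vms,\Xref^n[t))$ under the reference law is no longer trivial: outside $V_n$ the components interact, and one must establish via an autonomy argument (Proposition \ref{ref:CI} together with Lemma \ref{ref:CIfact}) that $\Xref^n_A$ is a measurable function of $(\vms_A, \poiss_A)$ alone (and similarly for $B$). This is a substantive step that your proposal simply takes for granted by choosing a reference process for which it is obvious; once you switch to the finite-$W$ reference process needed for the change of measure, you must supply this argument. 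Finally, a minor but real omission: the statement covers $t=\infty$, which the paper handles by a separate Doob-martingale passage to the limit; your proposal addresses only finite $t$, and the spatial exhaustion you mention does not substitute for this temporal limit.
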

Theorem \ref{res:MRF} is a direct consequence of a more general result, Proposition \ref{gen:MRF}, which
establishes 
this ``preservation of MRF'' property for IPS on a broader class of graphs (that satisfy the condition stated in 
Assumption \ref{mod:WPassu}), and Lemma \ref{lem-ass}, which shows that this class 
includes the graphs mentioned in Theorem \ref{res:MRF}. 
The proof of Proposition \ref{gen:MRF} is presented in Section \ref{MRF:rnd}. 

\ind
Theorem \ref{res:MRF}
is used in forthcoming work \cite{GanRam-LETDet22} to obtain an autonomous characterization of the marginal distribution on the root neighborhood of an IPS with homogeneous jump rate functions on
the $d$-regular tree. As elaborated in the next section,
the theorem is also relevant to the study of Gibbs-non Gibbs transitions of spin models. 

\ind We now describe two counterexamples that demonstrate that the results in
Theorem \ref{res:MRF} cannot in general be improved. 
Specifically, the first example shows that the 
analog of Theorem \ref{res:MRF} does not in general hold when $\mdeg = 1$. 

\begin{example}
\label{res:1-MRF}
\emph{There exists an IPS $X$ on a finite graph $G = (V,E)$ with jump rate functions $\rateset$ and initial data pair $(\vms,\vtx)$ such that the components of $\vms$ are mutually independent and for which $(\vms,X[t))$ does not form an MRF for any $t > 0$.}

\skipLine

Let $G = (V,E)$ be the path on three vertices with $V = \{1,2,3\}$ and $E = \{\{1,2\},\{2,3\}\}$. Let $\mksp = \X = \{0,1\}$, $\jmps = \{1\}$. Also, let $\vtx$ be the identity map on $\mksp$. Set $X_2(0) = \vms_2 := 0$ and let $(X_1(0),X_3(0)) = (\vms_1,\vms_3)$ be i.i.d. Bernoulli($1/2$) random variables. Note that then $\{\vms_v\}_{v=1,2,3}$ are mutually independent.

\ind Consider the following (Markovian) jump rate functions, $\rate^v_1: \R_+\times \cad^{V} \to \R_+$:
\[\rate^1_1 \equiv \rate^3_1 \equiv 0 \quad \te{ and } \quad \rate^2_1(t,x) = \indic{x_1(0) \neq x_3(0),x_2(t-) = 0}.\]
This model satisfies Assumptions \ref{mod:regular} 
and \ref{mod:WPbd}, and the maximum degree of any vertex in $G$ is $2$. Now, $X_1(0) \neq X_3(0)$ with probability $1/2$ and $X_2 \equiv 0$ on the event $\{X_1(0) = X_3(0)\}$. Note that the function $f: \cad_{t-}\to \R$ defined by $f(y) = y(0)$ is bounded and
measurable, and on the event $\{X_2(t-) = 1\}$, which implies $X_1(0) \neq X_3(0)$, it is easily verified that 
\[
\begin{array}{rll}
\ex{f(X_1[t))|X_2[t)} &= \PP\left(X_1(0) = 1|X_2[t)\right) &= \frac{1}{2}, \\
\ex{f(X_1[t))|X_{\{2,3\}}[t)} &= \PP\left(X_1(0) = 1|X_{\{2,3\}}[t)\right) &= 1-X_3(0). 
\end{array}
\]
Since $X_3(0)$ is random, $\law(X_1[t)|X_{\{2,3\}}[t)) \neq \law(X_1[t)|X_2[t))$. Hence, the trajectories $X[t)$ do not form an MRF for any $t > 0$.
\end{example}

\ind 
Next we observe that for some $t > 0$, the states $X(t) = \{X_v(t)\}_{v \in V}$ (as opposed to the trajectories) 
may fail to form an $\mdeg$-MRF for any $\mdeg \in \N$ even if the initial data $\vms$ forms an SGMRF.
Indeed, this follows from the substantial literature on the topic of dynamic transitions of IPS from Gibbs to non-Gibbs states as demonstrated in the example below.
\begin{example}
\label{res:state}
\emph{ Fix $d > 2$. Let $G$ be the infinite $d$-regular tree and let $\nu$ be the positive-boundary ferromagnetic Ising model for an inverse temperature-magnetic field pair $(\beta,h)$ at which the Ising model experiences a phase transition (as described in \cite[Section 12.2]{Geo11}).
If $X$ is the IPS corresponding to infinite-temperature Glauber dynamics (as described in \cite{Vanetal02}) with initial condition 
$X(0) \sim \nu$, then $X(0)$ forms a 1-SGMRF, but there exists an interval $[t_1,t_2)\subseteq \R_+$ such that for all $t \in [t_1,t_2)$, $\{X_v(t)\}_{v \in V}$ does not form an $\mdeg$-order MRF for any $\mdeg \geq 1$.}\\

The fact that $X(0)$ is an MRF follows from the fact that it is a Gibbs measure associated with a Markovian specification (see \cite[Section 12.2]{Geo11}). Furthermore, as an extremal countable state MRF \cite[Theorem 12.31]{Geo11}, it is also a Markov chain on the tree \cite[Corollary 2]{Zac83}. By Lemma \ref{res:Markov}, this implies that it is also an SGMRF and therefore an $\mdeg$-SGMRF for every $\mdeg \in \N$.
The assertion of the example then follows from \cite[Theorem 3.9 and Remark 3.10]{Vanetal12}. Since $\law(X(t))$ is non-Gibbs (i.e., non-quasilocal) in the stated interval, $X(t)$ fails to form an $\mdeg$-MRF for any $\mdeg \in \N$.
\end{example} 

\section{A Generalization of the Main Result}
\label{MRF}

\subsection{Statement of the More General Result} 
\label{MRF:gen}

For simplicity of formulation, in Theorem \ref{res:MRF} we only addressed certain classes of graphs $G$.
However, as shown in Proposition \ref{gen:MRF} below, 
the conclusion of Theorem \ref{res:MRF} in fact holds
for any graph-jump rate function pair that satisfies the following more general 
(but less transparent) assumption.
The assumption is expressed in terms of a certain family of reference processes which are modified versions of the original 
family of jump rate functions $\rateset\defeq \{\rate^v_j\}_{v \in V,j\in\jmps}$
defined as follows: 
for any $W \subset V$, let $\wh{\rateset}^W \defeq \{\rateref^{W,v}_j\}_{v \in V,j\in\jmps}$ be the family of jump rate functions defined by
\begin{equation}
\label{mod:refrate}
\rateref^{W,v}_j(\cdot,\cdot,\cdot) \defeq \begin{cases}
\rate^{v}_j(\cdot,\cdot,\cdot) &\te{ if } v \notin W,\\
1 &\te{ otherwise.}
\end{cases}
\end{equation}
The property that each reference IPS is a {\em strong} solution to the
associated SDE plays a crucial role in the proof of 
Proposition \ref{gen:MRF} (see Remark \ref{mod:whystrong} and
Proposition \ref{ref:CI}). This motivates the following assumption.
\begin{assumption}
\label{mod:WPassu}
The graph $G$, the initial data pair $(\vms,\vtx)$ and the family of jump rate functions $\rateset\defeq \{\rate^{v}_j\}_{v \in V,j \in\jmps}$ are
such that for every finite (and possibly empty) $W\subseteq V$,
the SDE \eqref{mod:infpart}-\eqref{ic} is strongly well-posed for the initial data
$\vms$ when $\rateset$ is replaced with the modified 
jump rate function family $\wh{\rateset}^W\defeq \{\rateref^{W,v}_j\}_{v \in V,j \in\jmps}$ defined in \eqref{mod:refrate}. 
\end{assumption}

\ind 
The next lemma shows that this assumption holds under the conditions of
Theorem \ref{res:MRF}. 
\begin{lemma}
\label{lem-ass}
Suppose $\rateset \defeq\{\rate^{v}_j\}_{v \in V,j \in\jmps}$ satisfies 
Assumptions \ref{mod:regular} and \ref{mod:WPbd} and 
$G$ is either a graph with finite maximal degree or an
a.s. realization of a Galton-Watson tree whose offspring distribution
has a finite first moment. 
Then $G$, $(\vms,\vtx)$ and $\rateset$ satisfy
Assumption \ref{mod:WPassu}.
\end{lemma}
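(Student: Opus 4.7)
The plan is essentially a verification argument: for each finite $W\subseteq V$, show that the modified family $\wh{\rateset}^W$ defined in \eqref{mod:refrate} still satisfies Assumptions \ref{mod:regular} and \ref{mod:WPbd}, and then quote the well-posedness results of \cite{GanRam-Hydro22} (cited in Remark \ref{rem:WP}) on the graph classes specified in the statement. The graph hypothesis is unchanged under the modification, so the only content is checking that replacing finitely many rate functions by the constant $1$ preserves locality, predictability, the c\`agl\`ad regularity in time, and the degree-dependent bound.

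First, I would fix an arbitrary finite $W\subseteq V$ and go through the three conditions in Assumption \ref{mod:regular} for the family $\wh{\rateset}^W=\{\rateref^{W,v}_j\}_{v\in V,j\in\jmps}$. For $v\notin W$ the function $\rateref^{W,v}_j=\rate^v_j$ is unchanged, so locality, predictability and regularity are inherited from $\rateset$. For $v\in W$ the function $\rateref^{W,v}_j\equiv 1$ is constant: it satisfies locality trivially by choosing the local version $\locrate^v_j\equiv 1$ on $\R_+\times(\mksp\times\cad)^{\cl{v}}$, it satisfies predictability because its value depends on no coordinate of $x$, and the constant map $t\mapsto 1$ is c\`agl\`ad. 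Hence $\wh{\rateset}^W$ satisfies Assumption \ref{mod:regular}.

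Next I would verify Assumption \ref{mod:WPbd}. Let $C:\N\times\R_+\to\R_+$ be the non-decreasing function provided by Assumption \ref{mod:WPbd} for $\rateset$, and define
\[
C'(k,t)\defeq \max\{C(k,t),1\},\quad (k,t)\in\N\times\R_+.
\]
Since the maximum of two functions that are non-decreasing in each argument is again non-decreasing in each argument, $C'$ has the required monotonicity. For $v\notin W$ we have $\rateref^{W,v}_j(t,\cdot,\cdot)=\rate^v_j(t,\cdot,\cdot)\leq C(|\cl{v}|,t)\leq C'(|\cl{v}|,t)$, and for $v\in W$ we have $\rateref^{W,v}_j(t,\cdot,\cdot)=1\leq C'(|\cl{v}|,t)$. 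Thus $\wh{\rateset}^W$ satisfies Assumption \ref{mod:WPbd} with the constant function $C'$, and the bound is uniform in the finite set $W$ (since $C'$ does not depend on $W$).

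Finally, I would invoke the cited well-posedness results. By \cite[Theorem 4.2]{GanRam-Hydro22}, strong well-posedness of the Poisson-driven SDE \eqref{mod:infpart}-\eqref{ic} holds on any graph in the class treated there under Assumptions \ref{mod:regular} and \ref{mod:WPbd}; and by \cite[Propositions 5.15 and 5.17]{GanRam-Hydro22}, this class includes every graph of bounded degree as well as almost every realization of a Galton--Watson tree whose offspring distribution has finite first moment. Since these results only depend on the graph and on the verified Assumptions \ref{mod:regular}-\ref{mod:WPbd} (which we just showed hold for $\wh{\rateset}^W$ with the common bound $C'$), strong well-posedness of \eqref{mod:infpart}-\eqref{ic} with rates $\wh{\rateset}^W$ for the initial data $\vms$ follows for each finite $W\subseteq V$. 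This is exactly Assumption \ref{mod:WPassu}. I do not anticipate a serious obstacle here; the only point that requires mild care is replacing $C$ by $C'=\max(C,1)$ so that the trivial rate $1$ at the vertices of $W$ is also dominated, and observing that this replacement preserves the monotonicity required by Assumption \ref{mod:WPbd}.
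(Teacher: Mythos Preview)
Your high-level strategy is correct and coincides with the paper's Step 4: verify that the modified family $\wh{\rateset}^W$ inherits Assumptions \ref{mod:regular} and \ref{mod:WPbd} (your replacement of $C$ by $C'=\max(C,1)$ is exactly the right fix), and then invoke the well-posedness theory of \cite{GanRam-Hydro22}. The verification portion is fine.

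The gap is in the final invocation. The well-posedness results \cite[Theorem 4.2, Propositions 5.15, 5.17]{GanRam-Hydro22} are formulated for rate families that satisfy an additional \emph{automorphism-invariance} (symmetry) condition, namely condition 1 of \cite[Definition 3.1]{GanRam-Hydro22}, which is not part of this paper's Assumption \ref{mod:regular}. Heterogeneous rate families like $\rateset$ or $\wh{\rateset}^W$ need not satisfy that condition as stated, so \cite[Theorem 4.2]{GanRam-Hydro22} cannot be cited as a black box under Assumptions \ref{mod:regular} and \ref{mod:WPbd} alone. Note also that Remark \ref{rem:WP}, which you lean on, explicitly defers to Lemma \ref{lem-ass} itself for this bridge, so invoking it here is circular.

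The paper closes this gap (its Steps 1--3) with a simple trick: relabel the vertices of $G$ by distinct integers and treat these labels as part of the mark space, so that the marked graph has trivial automorphism group; then define an auxiliary local rate family $\ov{\rateset}$ on $\wh{\Gmc}_{1,*}$ via \eqref{WP:locdef} that is automorphism-invariant by construction and agrees with $\rateset$ on the relevant inputs. This recasts the SDE \eqref{mod:infpart}--\eqref{ic} as an instance of \cite[(3.3)]{GanRam-Hydro22}, after which \cite[Theorem 4.2]{GanRam-Hydro22} applies directly. Once this bridge is in place for $\rateset$, your argument for general finite $W$ goes through verbatim, and indeed the paper's Step 4 is essentially your proposal.
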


The proof of Lemma \ref{lem-ass} is a technical extension of results in \cite{GanRam-Hydro22}, and thus deferred to Appendix \ref{WP}. As shown therein, Assumption \ref{mod:WPassu} in fact holds for the large class of finitely dissociable graphs introduced in \cite[Definition 5.11]{GanRam-Hydro22} whenever the jump rate function family $\rateset$ satisfies Assumptions \ref{mod:regular} and \ref{mod:WPbd}.

\ind 
We now state the generalization of Theorem \ref{res:MRF}. 
\begin{proposition}
\label{gen:MRF}
Suppose the graph $G$, initial data pair $(\vms,\vtx)$ and jump rate function family $\rateset$ satisfies Assumptions \ref{mod:regular}, \ref{mod:WPbd} and \ref{mod:WPassu}. Let $X$ be the solution to the SDE \eqref{mod:infpart}-\eqref{ic} and let $\mdeg\geq 2$ be an integer.
If $\vms$ forms an $\mdeg$-MRF (respy. $\mdeg$-SGMRF) with respect to $G$ then for each $t\in (0,\infty]$, $(\vms,X[t))$ forms an $\mdeg$-MRF (respy. $\mdeg$-SGMRF) with respect to $G$.
\end{proposition}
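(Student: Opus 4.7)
The plan is to prove the $\mdeg$-MRF (resp.\ $\mdeg$-SGMRF) property of $(\vms, X[t))$ by following the reference-process strategy outlined in the introduction: couple the IPS to a modified system whose conditional independence structure is transparent, and then transfer that structure back via an explicit Radon--Nikodym derivative. Fix an integer $\mdeg \geq 2$, some $t \in (0, \infty]$, and a subset $A \subseteq V$ (finite in the MRF case, possibly infinite but with finite $\mdeg$-neighborhood in the SGMRF case). Put $S := \kgneigh{\mdeg}{A}{G}$ and $B := V \setminus (A \cup S)$. Since $S$ is finite in both cases, Assumption \ref{mod:WPassu} applied with $W = S$ guarantees that the SDE obtained by replacing $\rateset$ with $\wh{\rateset}^S$ from \eqref{mod:refrate} is strongly well-posed; let $\wh{X}$ denote its strong solution, driven by the same Poisson processes $\poiss$ with the same initial data $\vms$.

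The first substantive step is to establish, under the law of the reference system, the conditional independence
\[
(\vms_A, \wh{X}_A[t)) \indp (\vms_B, \wh{X}_B[t)) \,\bigm|\, (\vms_S, \wh{X}_S[t)).
\]
Since $\rateref^{S,v}_j \equiv 1$ for every $v \in S$, each component $\wh{X}_v$ with $v \in S$ is a free pure-jump process depending only on $(\vms_v, \poiss_v)$. For $v \in A$, the inclusion $\cl{v} \subseteq A \cup S$ (which holds because $d_G(A, B) > \mdeg \geq 1$) ensures that the unchanged rate $\rate^v_j$ is determined by $(\vms_{A \cup S}, \wh{X}_{A \cup S})$; invoking strong well-posedness of the resulting restricted SDE on $A$ driven by $\poiss_A$ with the exogenous input $\wh{X}_S$ displays $\wh{X}_A$ as a measurable functional of $(\vms_A, \vms_S, \wh{X}_S, \poiss_A)$, and an analogous statement holds for $\wh{X}_B$. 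Combining this decomposition with the mutual independence of $\{\poiss_v\}_{v \in V}$, the independence of $\vms$ and $\poiss$, and the $\mdeg$-MRF (respy.\ $\mdeg$-SGMRF) hypothesis $\vms_A \indp \vms_B \mid \vms_S$ yields the displayed conditional independence; this is essentially the content of Proposition \ref{ref:CI}.

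The second substantive step is to invoke Proposition \ref{rn:girs} to obtain the Radon--Nikodym derivative $L_t$ of the joint law of $(\vms, X[t))$ with respect to $(\vms, \wh{X}[t))$. Since the two rate families coincide off $S$, this density admits a product form $L_t = \prod_{v \in S} L_v^{(t)}$, where each $L_v^{(t)}$ is a Girsanov-type expression depending only on $(\vms_{\cl{v}}, X_{\cl{v}}[t))$. The hypothesis $\mdeg \geq 2$ is exactly what permits the partition $S = S_A \sqcup S_B$ with $\cl{v} \cap B = \emptyset$ for $v \in S_A$ and $\cl{v} \cap A = \emptyset$ for $v \in S_B$: indeed, $d_G(A, B) \geq \mdeg + 1 \geq 3$ forces any $v \in S$ with a neighbor in $B$ to satisfy $d_G(v, A) \geq 2$, hence to have no neighbor in $A$. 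Consequently $L_t = L_A^{(t)} \cdot L_B^{(t)}$ with $L_A^{(t)}$ measurable with respect to $\sigma(\vms_{A \cup S}, X_{A \cup S}[t))$ and $L_B^{(t)}$ measurable with respect to $\sigma(\vms_{B \cup S}, X_{B \cup S}[t))$. A Bayes computation combining this factorization with the conditional independence from the first step then shows that
\[
\Emb[f(\vms_A, X_A[t)) g(\vms_B, X_B[t)) \mid \vms_S, X_S[t)] = \Emb[f(\vms_A, X_A[t)) \mid \vms_S, X_S[t)] \cdot \Emb[g(\vms_B, X_B[t)) \mid \vms_S, X_S[t)]
\]
for arbitrary bounded measurable $f, g$, which is the desired $\mdeg$-MRF (respy.\ $\mdeg$-SGMRF) property. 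I expect the principal technical burden to lie in the careful invocation of Proposition \ref{rn:girs} to establish both the product form of $L_t$ and the pointwise locality of each $L_v^{(t)}$ on $\cl{v}$; once those are in hand, the remainder of the argument is a clean bookkeeping exercise driven by the graph-geometric decomposition $S = S_A \sqcup S_B$ that $\mdeg \geq 2$ enables.
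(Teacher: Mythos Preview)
Your high-level strategy coincides with the paper's: construct a reference process with transparent conditional-independence structure, compute the Radon--Nikodym derivative of the true law against the reference law, factor it according to the partition, and transfer the conditional independence via a Bayes-type argument (the paper packages this last step as Lemma \ref{lem:PutSch}). The implementation, however, differs in one substantive way that is worth flagging.

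You take the modification set to be $W = S$ itself, whereas the paper takes $W = V_n$, a ball around a fixed vertex chosen large enough that $S \subseteq V_{n-1}$. This changes the nature of the reference-side conditional independence. With $W = V_n$, the paper arranges that $\wh{X}_A^n$ is a measurable functional of $(\vms_A,\poiss_A)$ \emph{alone} (vertices of $A$ inside $V_n$ are free; those outside have closure entirely in $A$), so Proposition \ref{ref:CI} applies verbatim via Lemma \ref{ref:CIfact}. With your choice $W = S$, the process $\wh{X}_A$ genuinely depends on $\wh{X}_S$ (hence on $\poiss_S$), so the second inclusion in property 2 of Lemma \ref{ref:CIfact} fails for $i=1,2$, and your sentence ``this is essentially the content of Proposition \ref{ref:CI}'' is not accurate as stated. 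Your argument can be repaired: one shows the autonomous SDE on $A\cup S$ is strongly well-posed by the same extension trick used in the proof of Proposition \ref{ref:CI}, giving $\sigma(\vms_A,\wh{X}_A) \subseteq \sigma(\vms_{A\cup S},\poiss_{A\cup S})$, and then a short conditional-independence calculation (e.g.\ via Lemma \ref{condind:conds}(g), using that $\poiss_S$ is independent of $(\vms,\poiss_A,\poiss_B)$ so that conditioning on $(\vms_S,\wh{X}_S)$ rather than $(\vms_S,\poiss_S)$ is harmless) recovers the desired relation. Your choice buys a more economical density---the product runs only over $v\in S$ rather than all of $V_n$---and your splitting $S = S_A\sqcup S_B$ is a clean substitute for the paper's $V_n = (\cl{A}\cap V_n)\sqcup(V_n\setminus\cl{A})$.

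Two smaller omissions: you do not verify that $L^W$ is a true martingale (Proposition \ref{rn:girs} only gives a local martingale; the paper upgrades this in Corollary \ref{rn:rnd} using Assumption \ref{mod:WPbd} and a Poisson moment bound), and you do not treat the endpoint $t=\infty$, which the paper handles by a Doob martingale-convergence argument after embedding truncated paths in an enlarged state space.
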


The proof of Proposition \ref{gen:MRF} is given in Section \ref{MRF:rnd}. Its outline is as follows. Fix $G$, $(\vms,\vtx)$ and $\rateset$ as in the proposition, and let $X = \{X_v\}_{v \in V}$ be a solution to the associated SDE \eqref{mod:infpart}-\eqref{ic}. 
Also, fix $\mdeg \geq 2$ and assume that $\vms$ forms an $\mdeg$-MRF on $\mksp^V$
with respect to $G$. 
For any $t \in (0,\infty)$, our proof that $(\vms,X[t))$ also forms an $\mdeg$-MRF can be broken into four main steps. First, in Section \ref{MRF:ref},
we construct a sequence of $\X^V$-valued ``reference processes'' $\{\Xref^n\}_{n \in\ N},$ with initial data pair $(\vms,\vtx)$ having the property that 
for any partition $A,B, S$ of $V$ for which $S = \kneigh{\mdeg}{A}$ and $A$ is finite, we have
\begin{equation}
\label{MRF:refCI}
(\vms_A,\Xref^n_A[t))\indp (\vms_B,\Xref^n_B[t))| (\vms_S,\Xref^n_S[t)),\quad t \in (0,\infty)
\end{equation}
for all $n$ sufficiently large (depending on $S$). 
Next, in Section \ref{MRF:key} we compute the Radon-Nikodym derivative of the law on path space of the IPS $X$ with respect to that of
the reference process
$\Xref^n$ for any $n \in \N$ by first establishing a duality relation between IPS and point processes (see Proposition \ref{duo:duality}) and then leveraging results from point process theory. Next, combining an explicit factorization of this Radon-Nikodym derivative with 
a slight modification of a result from \cite{PutSch85} (see Lemma \ref{lem:PutSch}), we prove in Section \ref{MRF:rnd} that \eqref{MRF:refCI} must hold with $\Xref^n$ replaced by $X$, which implies that $(\vms,X[t))$ forms an $\mdeg$-MRF with respect to $G$
for all $t \in (0,\infty)$. Finally, we extend the result to the case $t = \infty$ via a standard martingale argument. 
The proof of preservation of the SGMRF property proceeds in a similar fashion, first assuming $\vms$ forms an $\mdeg$-SGMRF and following the above argument, where now $A$ may be infinite, but $S$ must still be finite.

\subsection{Reference Processes and their Conditional Independence Properties}
\label{MRF:ref}

For the remainder of the article, we assume that $\X = \Z$. Note that this is without loss of generality because
the IPS $X$ may be regarded as a process with 
state space $\Z^V$ such that $X_v(t) \in \X$ almost surely for all $v \in V$ and $t \in \R_+$. Given initial data pair $(\vms,\vtx)$ from \eqref{ic}, for any finite vertex set $W\subseteq V$, let $\Xref^W$ be the solution to the SDE
\begin{equation}
\label{rn:infpart}
\Xref^W_v(t) = \begin{cases}
\vtx(\vms_v) + \int_{(0,t]\times (0,1]\times\jmps}j\,\poiss_v(ds,dr,dj) &\te{ if } v \in W,\\
\vtx(\vms_v) + \int_{(0,t]\times \R_+\times\jmps}j\indic{r\leq \rate^v_j(s,\vms,\Xref^W)}\,\poiss_v(ds,dr,dj) &\te{ if } v \notin W. 
\end{cases}
\end{equation}
The equation \eqref{rn:infpart} can be rewritten in terms of the modified rate function family $\wh{\rateset}^W \defeq \{\rateref^{W,v}_j\}_{v \in V,j \in\jmps}$ as follows:
\[\Xref^W_v(t) = \vtx(\vms_v) + \int_{(0,t]\times \R_+\times\jmps}j\indic{r\leq \rateref^{W,v}_j(s,\vms,\Xref^W)}\,\poiss_v(ds,dr,dj), \quad v \in V.\]
Thus, by Assumption \ref{mod:WPassu}, the SDE \eqref{rn:infpart} is strongly well-posed and $\Xref^W$ is a.s. uniquely defined.

\ind Fix an arbitrary vertex $\root \in V$ and define 
\begin{equation}
\Xref^n \defeq \Xref^{V_n} \quad\te{ for }\quad V_n := \{v \in V: d_G(v,\root)\leq n\}, \qquad n \in \N. 
\label{ref:Xndef}
\end{equation}

\ind The main result of this section, Proposition \ref{ref:CI} below,
shows that $(\vms,\Xref^n)$ has a conditional structure that partially resembles an $\mdeg$-SGMRF. 
\begin{proposition}
\label{ref:CI}
Suppose $G,(\vms,\vtx)$ and $\rateset$ satisfy Assumptions \ref{mod:regular}, \ref{mod:WPbd} and \ref{mod:WPassu}, and for each $n \geq 2$ let $\Xref^n$ be as defined in \eqref{rn:infpart} and \eqref{ref:Xndef}. Fix an integer $\mdeg \geq 1$ and let
$A,B, S \subseteq V$ form a partition of $V$ such that $S = \kneigh{\mdeg}{A} \subseteq V_{n-1}$ and also suppose
$\vms_A\indp \vms_B|\vms_{S}$. Then for any $t \in (0,\infty)$, 
\begin{equation}
\label{ref:refCIeq}
(\vms_A,\Xref^n_A[t))\indp (\vms_B,\Xref^n_B[t))|(\vms_{S},\Xref^n_{S}[t)).
\end{equation}
\end{proposition}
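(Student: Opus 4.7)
My approach exploits two structural features of the reference dynamics \eqref{rn:infpart}. First, for every $v\in V_n$ the $v$th coordinate of $\Xref^n$ evolves autonomously, with no coupling to other coordinates; since $S\subseteq V_{n-1}\subseteq V_n$, this means $\Xref^n_S[t)$ is a measurable functional of $(\vms_S,\poiss_S)$ alone. Second, the locality hypothesis in Assumption \ref{mod:regular}(1), together with $\mdeg\geq 1$ and $S=\kneigh{\mdeg}{A}$, gives $\cl{v}\subseteq A\cup S$ for every $v\in A$ and $\cl{v}\subseteq B\cup S$ for every $v\in B$, because any neighbor of a vertex of $A$ (respectively $B$) that does not itself lie in $A$ (respectively $B$) is at graph distance one from $A$ and therefore belongs to $S$.

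Using these two facts, I would treat $\Xref^n_S$ as a prescribed input and show that the equations in \eqref{rn:infpart} for the $A$-coordinates form a closed sub-SDE driven only by $\vms_{A\cup S}$, $\poiss_A$ and $\Xref^n_S$. By strong well-posedness of the full system (Assumption \ref{mod:WPassu}) and pathwise uniqueness applied to this reduced sub-SDE, the $A$-restriction of the unique solution of \eqref{rn:infpart} coincides with the unique solution of the reduced sub-SDE, yielding a deterministic measurable map $F_A$ with $\Xref^n_A[t)=F_A(\vms_A,\vms_S,\poiss_A,\Xref^n_S[t))$ almost surely, and symmetrically a map $F_B$ for $B$. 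Now set $\mathcal{G}\defeq \sigma(\vms_S,\poiss_S)$. The hypothesis $\vms_A\indp \vms_B\mid \vms_S$, the mutual independence of $\{\poiss_v\}_{v\in V}$, and $\vms\indp \poiss$ combine to give $(\vms_A,\poiss_A)\indp (\vms_B,\poiss_B)\mid \mathcal{G}$. Since $\vms_S$ and $\Xref^n_S[t)$ are both $\mathcal{G}$-measurable, the functional representations $F_A,F_B$ upgrade this to $(\vms_A,\Xref^n_A[t))\indp(\vms_B,\Xref^n_B[t))\mid \mathcal{G}$.

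The final step is to reduce the conditioning from $\mathcal{G}$ down to $\sigma(\vms_S,\Xref^n_S[t))$, and this is where I would invoke the technical conditional-independence lemma proved in Appendix \ref{ap-condind}. The key observation is that the extra information in $\mathcal{G}$ beyond $\sigma(\vms_S,\Xref^n_S[t))$---namely the parts of $\poiss_S$ not recoverable from $\Xref^n_S[t)$, such as Poisson points in the unused region $\{r>1\}$, post-$t$ arrivals, and the exact $r$-coordinates of jump-producing points---is independent of $(\vms_A,\vms_B,\poiss_A,\poiss_B)$ given $\vms_S$, and hence independent of the pair $((\vms_A,\Xref^n_A[t)),(\vms_B,\Xref^n_B[t)))$ given $\sigma(\vms_S,\Xref^n_S[t))$. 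Combining this with the preceding step via the appendix lemma then yields \eqref{ref:refCIeq}. I expect this conditioning-reduction step to be the main technical obstacle: the structural decoupling and the conditional independence on the larger $\sigma$-algebra $\mathcal{G}$ are transparent, but identifying precisely which parts of $\poiss_S$ are invisible to $\Xref^n_S[t)$ and verifying the hypotheses of the appendix lemma require some care, which is the reason that lemma is isolated in its own appendix.
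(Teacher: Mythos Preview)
Your approach is workable but takes a longer route than the paper's, and in doing so you overlook the key structural simplification that the hypothesis $S\subseteq V_{n-1}$ buys. You only use the inclusion $\cl{v}\subseteq A\cup S$ for $v\in A$, which forces you to treat $\Xref^n_S$ as an exogenous input to the $A$-dynamics and leaves you with the conditioning-reduction step from $\sigma(\vms_S,\poiss_S)$ down to $\sigma(\vms_S,\Xref^n_S[t))$. The paper instead splits $A$ into $A\cap V_n$ (where the dynamics are already autonomous with constant rate $1$) and $A\setminus V_n$; for $v$ in the latter set, $\cl{v}\cap S\subseteq \cl{v}\cap V_{n-1}=\emptyset$ since $v\notin V_n$, and $\cl{v}\cap B=\emptyset$ by separation, so in fact $\cl{v}\subseteq A$. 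Hence $\Xref^n_A$ solves a genuinely autonomous SDE on $A$ and is $\sigma(\vms_A,\poiss_A)$-measurable, with no dependence on $S$ at all. The same holds for $B$ and (trivially) for $S$.

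This stronger measurability is exactly what Lemma~\ref{ref:CIfact} needs: with $Z^1_i=\vms_{D_i}$, $Z^2_i=\poiss_{D_i}$, $Z^3_i=(\vms_{D_i},\Xref^n_{D_i}[t))$, the inclusion $\hfiltm^{Z^3_i}\subseteq \hfiltm^{Z^{\{1,2\}}_i}$ holds on the nose, and the lemma delivers \eqref{ref:refCIeq} immediately. By contrast, your representation $\Xref^n_A=F_A(\vms_A,\vms_S,\poiss_A,\Xref^n_S[t))$ violates that inclusion, so Lemma~\ref{ref:CIfact} as stated does not apply to your setup; you would instead have to carry out the reduction via something like property~(g) of Lemma~\ref{condind:conds}, verifying that $\E[Z\mid\vms_S,\poiss_S]$ is $\sigma(\vms_S,\Xref^n_S[t))$-measurable for every $\sigma(\vms_A,\Xref^n_A[t))$-measurable $Z$. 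That is doable, but it is precisely the work the paper avoids by exploiting the $V_{n-1}$ hypothesis. In short: your outline is not wrong, but you are not using the full strength of the assumption $S\subseteq V_{n-1}$, and once you do, the ``main technical obstacle'' you anticipate disappears.
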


The proof of Proposition \ref{ref:CI} is given after the following abstract technical lemma, which provides sufficient conditions under which conditional independence properties can be transferred from one collection of random elements to another. 
\begin{lemma}
\label{ref:CIfact}
For each $i,j = 1, 2, 3$, let $Z^j_i$ be a random element taking values in some Polish space $\Pol^j_i$ that satisfies the following properties: 
\begin{enumerate}
\item $\{Z^2_i\}_{i = 1, 2, 3}$ is a set of mutually independent random elements that is independent of $\{Z^1_i\}_{i=1,2,3}$;
\item $\hfiltm^{Z^1_i}\subseteq\hfiltm^{Z^3_i}\subseteq \hfiltm^{Z^{\{1,2\}}_i}$ for $i=1,2,3,$ where $Z^{\{1,2\}}_i = (Z^1_i,Z^2_i)$.
\end{enumerate}
Then
\begin{equation}
\label{CI-3}
Z^1_1\indp Z^1_2|Z^1_3
\end{equation}
implies $Z_1^3\indp Z_2^3|Z^3_3$.
\end{lemma}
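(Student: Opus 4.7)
The plan is to establish $Z^3_1 \indp Z^3_2 \mid Z^3_3$ by first proving two auxiliary conditional independence statements \emph{conditioned on $Z^1_3$} (where everything is easier), and then lifting the conditioning from $Z^1_3$ to $Z^3_3$ via a standard chain rule and the $\sigma$-algebra sandwich in hypothesis (2).

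First I would show $(Z^1_1, Z^2_1) \indp (Z^1_2, Z^2_2) \mid Z^1_3$ by a direct computation on product events: using hypothesis (1), the conditional probability of a rectangle $(A_1 \times A_2) \times (B_1 \times B_2)$ factors via the given $Z^1_1 \indp Z^1_2 \mid Z^1_3$, the mutual independence of $\{Z^2_i\}$, and the joint independence of $\{Z^2_i\}$ from $\{Z^1_i\}$ (so that $Z^2_j \indp Z^1_3$ and the $Z^2$-factors pass through the conditioning untouched). Since $\hfiltm^{Z^3_i} \subseteq \hfiltm^{Z^{\{1,2\}}_i}$ for $i=1,2$ by hypothesis (2), the standard fact that conditional independence descends to sub-$\sigma$-algebras yields $Z^3_1 \indp Z^3_2 \mid Z^1_3$.

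Next I would show $(Z^3_1, Z^3_2) \indp Z^3_3 \mid Z^1_3$. Because $\hfiltm^{Z^3_3} \subseteq \hfiltm^{Z^{\{1,2\}}_3}$ and $Z^1_3$ is already in the conditioning, it suffices to establish $(Z^1_1, Z^2_1, Z^1_2, Z^2_2) \indp Z^2_3 \mid Z^1_3$. But by hypothesis (1) the variable $Z^2_3$ is \emph{unconditionally} independent of the tuple $(Z^1_1, Z^2_1, Z^1_2, Z^2_2, Z^1_3)$, and unconditional independence from a collection containing the conditioning variable immediately upgrades to conditional independence.

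Finally I would invoke the standard chain rule: \emph{if $X \indp Y \mid W$ and $(X,Y) \indp Z \mid W$, then $X \indp Y \mid (W, Z)$}. A short verification: for bounded measurable $f, g$, the second hypothesis gives $\E[f(X)g(Y) \mid W, Z] = \E[f(X)g(Y)\mid W]$, which by the first equals $\E[f(X)\mid W]\,\E[g(Y)\mid W]$, and each factor equals $\E[f(X)\mid W,Z]$, $\E[g(Y)\mid W,Z]$ by the marginal consequences $X \indp Z \mid W$ and $Y \indp Z \mid W$ of the second hypothesis. Applying this with $(X,Y,W,Z) = (Z^3_1, Z^3_2, Z^1_3, Z^3_3)$ yields $Z^3_1 \indp Z^3_2 \mid (Z^1_3, Z^3_3)$, and since hypothesis (2) provides $\hfiltm^{Z^1_3} \subseteq \hfiltm^{Z^3_3}$, the conditioning $\sigma$-algebra $\sigma(Z^1_3, Z^3_3)$ coincides with $\sigma(Z^3_3)$, delivering the conclusion \eqref{CI-3}$\Rightarrow Z^3_1 \indp Z^3_2 \mid Z^3_3$. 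The only obstacle is careful bookkeeping across the three layers of hypotheses; no substantive analytic difficulty appears.
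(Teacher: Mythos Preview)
Your proof is correct and takes a genuinely different route from the paper's. The paper first lifts the conditioning $\sigma$-algebra from $Z^1_3$ to $Z^3_3$ \emph{before} augmenting the two sides: it shows $Z^1_1 \indp Z^1_2 \mid (Z^1_3, Z^2_3)$, then uses a reduction principle (if $\mathbb{E}[Y \mid \mathcal{G}_3]$ is $\mathcal{G}_4$-measurable for $\mathcal{G}_4 \subseteq \mathcal{G}_3$, conditional independence drops to $\mathcal{G}_4$) to obtain $Z^1_1 \indp Z^1_2 \mid Z^3_3$, and only then brings in $Z^2_1, Z^2_2$ on each side before descending to $Z^3_i$. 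You instead augment first to get $(Z^{\{1,2\}}_1) \indp (Z^{\{1,2\}}_2) \mid Z^1_3$, descend immediately to $Z^3_1 \indp Z^3_2 \mid Z^1_3$, and then upgrade the conditioning via your chain rule combined with the sandwich $\mathcal{H}^{Z^1_3} \subseteq \mathcal{H}^{Z^3_3}$. Your approach is arguably cleaner: the chain-rule step is self-contained and avoids the paper's slightly delicate reduction step (its Lemma~\ref{condind:conds}(g)), at the cost of needing the auxiliary statement $(Z^3_1, Z^3_2) \indp Z^3_3 \mid Z^1_3$. One minor point worth making explicit: the hypothesis gives inclusions of \emph{completed} $\sigma$-algebras, so the identification $\sigma(Z^1_3, Z^3_3) = \sigma(Z^3_3)$ holds only after completion; this is harmless since conditional independence is insensitive to completions (the paper records this as Lemma~\ref{condind:condind}).
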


\ind 
Relegating the proof of the lemma to Appendix \ref{ap-condind}, 
we first apply it to prove Proposition \ref{ref:CI}.
\begin{proof}[Proof of Proposition \ref{ref:CI}:]
Fix a partition $A,B,S\subseteq V$ satisfying the conditions of the proposition, and for notational convenience, set $D_1 := A,D_2 := B, D_3 := S$. To prove Proposition \ref{ref:CI}, it clearly suffices to show that the conditions of Lemma \ref{ref:CIfact} are satisfied by $Z^1_i = \vms_{D_i}$, $Z^2_i = \poiss_{D_i}$ and $Z^3_i = (\vms_{D_i},\Xref^n_{D_i}[t))$, $i=1,2,3$, since then Lemma \ref{ref:CIfact} implies that $Z_1^3\indp Z_2^3|Z_3^3$, which is equivalent to \eqref{ref:refCIeq}.

\ind
First, note that $\vms_{D_1}\indp \vms_{D_2}|\vms_{D_3}$ by assumption and so $(Z^1_j)_{j=1,2,3}$ satisfies \eqref{CI-3}. Further, since $D_i$, $i = 1, 2, 3,$ are disjoint,
$\{\poiss_{D_i}\}_{i=1,2,3}$ are mutually independent and, by assumption, also independent of $\{\vms_{D_i}\}_{i=1,2,3}$. Thus, $Z^1$ and $Z^2$ satisfy 
property 1 of Lemma \ref{ref:CIfact}. 
It only remains to verify the measurability condition stated in property 2.
The first inclusion
${\mathcal H}^{Z_i^1} \subseteq {\mathcal H}^{Z_i^3}$ holds trivially for $i = 1, 2, 3$. 
To prove the second inclusion, note that 
$\Xref^n$ is the a.s. unique solution to the SDE
\eqref{rn:infpart} with $W = V_n$ and so substituting the local jump rate functions $\alt{\rateset} \defeq \{\locrate^v_j\}_{v \in V,j\in\jmps}$ from condition 1 of Assumption \ref{mod:regular}, we see that its marginal 
$\Xref^n_A$ on the set $A$ solves the following SDE: 
\begin{equation}
\label{ref:XrefA}
\Xref^n_v(t) = \begin{cases}
\x(\vms_v) + \int_{(0,t]\times (0,1]\times\jmps}j\poiss_v(ds,dr,dj)&\te{ if } v \in V_n\cap A,\\
\x(\vms_v) + \int_{(0,t]\times \R_+\times\jmps}j\indic{r \leq \alt{\rate}^v_j(s,\Xref^n_{\cl{v}},\vms_{\cl{v}})}\poiss_v(ds,dr,dj)&\te{ if } v \in A\setminus V_n.
\end{cases}
\end{equation}
We now claim that $\cl{v}\subseteq A$ for every $v \in A\setminus V_n$,
and thus the SDE \eqref{ref:XrefA} for the marginal $\Xref_A$ is autonomously defined. 
The claim holds because for any $v \in A\setminus V_n$, the fact that $v \in A$, $S = \kneigh{\mdeg}{A} \subseteq V_{n-1}$ and $S\cap B = \emptyset$ imply 
$\cl{v} \cap B \subseteq S\cap B = \emptyset$, whereas the fact that $v \notin V_{n}$ implies $\cl{v} \cap S \subseteq \cl{v}\cap V_{n-1} = \emptyset$. Since $A, B$ and $S$ form a partition of
$V$, this shows that $\cl{v}\subseteq A$. 

\ind
We now show that the SDE \eqref{ref:XrefA} is strongly well-posed. 
Fix any solution space $\fpspace$ supporting the driving Poisson processes $\alt{\poiss}_A$ (in the sense of Definition \ref{mod:drive}) and $\vms_A$, and consider an extension $(\tilde{\Omega}, \tilde{{\mathcal F}}, \tilde{\mathbb{F}}, \tilde{\mathbb{P}})$ of the solution space that 
also supports i.i.d. Poisson processes $\alt{\poiss}_{V\setminus A}$ and $\vms_{V \setminus A}$ such that $\alt{\poiss}\defeq (\alt{\poiss}_v)_{v \in V}$ is a collection of driving Poisson processes of \eqref{rn:infpart} with $W = V_n$.
Let $\alt{Y}^A$ and $\alt{Z}^A$ be two weak solutions to \eqref{ref:XrefA} on the solution space $\fpspace$ with the same initial
data $\vms_A$, and, recalling that \eqref{rn:infpart} is strongly well-posed, let $\alt{X} = \alt{X}^n$ be the a.s. unique solution to \eqref{rn:infpart} with $W = V_n$ and initial data $\vms$. Then, using the property that the marginal on $A$ of any solution to \eqref{ref:XrefA} is autonomously defined, the processes 
\[\alt{Y}_v \defeq \begin{cases}
\alt{Y}^A_v &\te{ if }v \in A\\
\alt{X}_v &\te{ if } v \notin A
\end{cases}\quad \te{and}\quad \alt{Z}_v \defeq\begin{cases}
\alt{Z}^A_v &\te{ if }v \in A\\
\alt{X}_v &\te{ if } v \notin A
\end{cases},\]
are both solutions to \eqref{rn:infpart} on $\fpspace$ with $W = V_n$. Therefore $\alt{X} = \alt{Y} = \alt{Z}$ a.s., and so in particular
$\alt{Y}^A = \alt{Z}^A$ a.s.. Thus, \eqref{ref:XrefA} is strongly well-posed.
In turn, the strong well-posedness of \eqref{ref:XrefA} implies $\Xref^n_A$ is the a.s. unique solution to \eqref{ref:XrefA} driven by $\poiss_A$, and hence by Remark \ref{mod:whystrong}, $(\vms_{A},\Xref^n_{A}[t))$ must be $\hfiltm_{t-}^{\poiss_{A}}\vee\hfiltm^{\vms_{A}}$-measurable. This proves the second inclusion in property 2 for the case $i = 1$. 

\ind 
The case $i = 2$ can be argued similarly. We first claim that $\kgneigh{\mdeg}{B}{G} \subseteq S$. Indeed, if the claim were not true, then since $A,B,S$ is a (disjoint) partition of $V$, 
it must be true that $\kgneigh{\mdeg}{B}{G}\cap A \neq \emptyset$ or equivalently, there must exist $u \in A$ and $v\in B$
such that $d_G(u,v) \leq \mdeg$. However, since $S = \kgneigh{\mdeg}{A}{G}$ by assumption, this implies
$v \in \kgneigh{\mdeg}{A}{G}\cap B = S\cap B =\emptyset$, which is a contradiction. This proves the claim. 
Given the claim, an identical argument as that used for $i=1$ shows that $\Xref^n_B$ is autonomously defined and $(\vms_{B},\Xref^n_{B}[t))$ is $\hfiltm_{t-}^{\poiss_{B}}\vee\hfiltm^{\vms_{B}}$-measurable, thus proving the second inclusion
in property 2 of Lemma \ref{ref:CIfact} when $i = 2$. The proof for the case $i= 3$ is much simpler. By 
\eqref{rn:infpart} with $W = V_n$,
for each $v \in S\subset V_n$, $\Xref^n_v$ is given by the one-dimensional, $\hfiltm^{\vms_S}\vee\hfilt^{\poiss_S}$-adapted process $t\mapsto \x(\vms_v) + \int_{(0,t]\times(0,1]\times\jmps}j\poiss_v(ds,dr,dj)$. This shows that $\Xref^n_v$ is a measurable function of $(\vms_v,\poiss_v)$. Since this is true for each $v \in S$, this proves that
property 2 of Lemma \ref{ref:CIfact} also holds for $i = 3$. 
This completes the verification of the conditions of Lemma \ref{ref:CIfact} for the $\{Z^j_i\}_{i,j\in\{1,2,3\}}$ defined at the start of the proof,
and thus proves the proposition. 
\end{proof}

\subsection{Change of Measure Results on Infinite Graphs}
\label{MRF:key}

In this section, we identify the form of the Radon-Nikodym derivative of the law of the solution to the SDE \eqref{mod:infpart}-\eqref{ic} with respect to that of the reference process $\Xref^W$ for finite $W$. To state our result, we first introduce the notions of proper trajectories and 
their so-called jump characteristics. Recall the definition of $\jmp{x}{t}$ and $\Delta x$ from Section \ref{nota}.

\begin{definition}
\label{mod:proper}
For any $U\subseteq V$, we say a c\`adl\`ag function $x \in \cad^U$ is proper if for every $u \neq v \in U$ and $t < \infty$, $\jmp{x_v}{t} \cap \jmp{x_u}{t} = \emptyset$.
\end{definition}

\begin{definition}
\label{duo:jumpchar}
Fix $U\subseteq V$ finite, $t\in(0,\infty)$, and let $x\in \cad_{t-}^U$ be proper. Then the \emph{jump characteristics} of $x$ are the elements $\{(\thx{k}{x},\jx{k}{x},\vx{k}{x})\}\subset (0,\infty) \times \jmps\times U$, where $\{\thx{k}{x}\} \defeq \jmp{x}{\infty}$ 
is an increasing sequence, and $\Delta x_{\vx{k}{x}}(\thx{k}{x}) = \jx{k}{x}$ for each $k < |\jmp{x}{\infty}|+1$. When the trajectory $x$ is clear from the context, we simply write $\{(\thi{k},\ji{k},\vi{k})\}$ for $\{(\thx{k}{x},\jx{k}{x},\vx{k}{x})\}$.
\end{definition}

\ind We now establish conditions under which the jump characteristics of a process exist. 
\begin{lemma}
\label{proper:a.s.}
Suppose the jump rate function family $\rateset$ satisfies conditions 2 and 3 of Assumption \ref{mod:regular}. Given the initial data pair $(\vms,\vtx)$, for any finite $W\subseteq V$, let $\Xref^W$ be any weak solution to the SDE \eqref{rn:infpart} and let $X$ be any weak solution to \eqref{mod:infpart}-\eqref{ic} for the same initial data pair $(\vms,\vtx)$. Then for any finite $U\subseteq V$, the jump characteristics of $\Xref^W_U$ and $X_U$ are almost surely well defined. 
\end{lemma}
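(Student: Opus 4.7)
The plan is to split the existence of jump characteristics into two almost-sure statements: (i) the set $\jmp{X_U}{\infty}$ (respectively $\jmp{\Xref^W_U}{\infty}$) has no finite accumulation point, so that its elements can be enumerated as a strictly increasing sequence $\thi{1}<\thi{2}<\ldots$; and (ii) at each such time, exactly one coordinate $v \in U$ jumps, i.e., the process is proper in the sense of Definition~\ref{mod:proper}. Given (i) and (ii), for each $k$ there is a unique $\vi{k} \in U$ with $\Delta X_{\vi{k}}(\thi{k}) \neq 0$, and setting $\ji{k}:=\Delta X_{\vi{k}}(\thi{k})$ yields $\ji{k} \in \jmps$, so that the characteristics $\{(\thi{k},\ji{k},\vi{k})\}$ of Definition~\ref{duo:jumpchar} are well-defined. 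I will argue (i) and (ii) for $X$; the same reasoning applies verbatim to $\Xref^W$ since it is likewise a c\`adl\`ag weak solution driven by the same family $\{\poiss_v\}_{v \in V}$.

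For (i), the key input is the a priori c\`adl\`ag regularity of the weak solution $X$ together with the discreteness of the jump set. Every jump of $X_v$ arises from an atom of $\poiss_v$ and therefore lies in the finite set $\jmps \subseteq \Z \setminus \{0\}$. Hence each such jump has magnitude at least $m := \min_{j \in \jmps}|j| \geq 1$. A standard property of c\`adl\`ag paths is that they admit only finitely many jumps of magnitude at least $m > 0$ on any bounded interval, so $|\jmp{X_v}{T}| < \infty$ almost surely for each finite $T$. Taking a finite union over $v \in U$ yields $|\jmp{X_U}{T}| < \infty$ a.s., so $\jmp{X_U}{\infty}$ is at most countable with no finite accumulation point and can be enumerated as a strictly increasing sequence. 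Note that this argument does not require Assumption~\ref{mod:WPbd}; we only use the finiteness of $\jmps$ and the c\`adl\`ag property inherent in the definition of a weak solution.

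For (ii), it suffices to show that for any distinct $u,v \in V$, the jump times of $X_u$ and $X_v$ are almost surely disjoint; taking a countable union over pairs in $V$ then yields properness. Since the jump times of $X_v$ are contained in the set of first coordinates of atoms of $\poiss_v$, I will establish the stronger fact that, almost surely, no atom of $\poiss_u$ and no atom of $\poiss_v$ share a first coordinate. The intensity $\leb^2 \otimes \Sm{\jmps}$ is infinite in the $r$-direction, so one cannot directly project; instead I truncate. For each $K \in \N$, the restrictions $\poiss_u|_{\R_+ \times [0,K] \times \jmps}$ and $\poiss_v|_{\R_+ \times [0,K] \times \jmps}$ are independent Poisson random measures with finite intensity $K|\jmps|$ per unit time, and their projections onto the $s$-axis are independent Poisson processes on $\R_+$ with the non-atomic intensity $K|\jmps|\,ds$. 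Their superposition is therefore a Poisson process with a non-atomic intensity, hence a.s. simple, so its two constituent processes share no atoms a.s. Taking the countable union over $K \in \N$ (the sets of first coordinates with $r \leq K$ increase to the full set of first coordinates) gives the desired disjointness for the pair $(u,v)$, and a final countable union over $u \neq v \in V$ concludes.

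The main subtlety, and the only place where one has to be careful, is that the lemma is proved under conditions~2 and 3 of Assumption~\ref{mod:regular} alone, with no boundedness hypothesis on the rates and no well-posedness assumption. One therefore cannot argue the local finiteness of jumps via integrability of an intensity, but must extract it directly from the c\`adl\`ag regularity of the given weak solution together with the discreteness of $\jmps$. Once this observation is made, the remaining argument is a straightforward truncation of the driving Poisson intensity and standard facts about simple Poisson processes.
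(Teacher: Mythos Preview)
Your proof is correct and follows essentially the same approach as the paper: both arguments show properness by observing that $\jmp{X_v}{t}$ is contained in the set of time-coordinates of atoms of $\poiss_v$ and that independent Poisson processes a.s.\ share no atom times, and both deduce local finiteness of jump times from the c\`adl\`ag property together with discreteness of the state. Your truncation step in (ii) makes explicit a point the paper passes over (the set $\{s\}\times\R_+\times\jmps$ is unbounded in the $r$-direction), but this is a presentational difference, not a substantive one.
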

\begin{proof}
Note that $\Xref^{\emptyset}$ is a weak solution to \eqref{mod:infpart}-\eqref{ic} so we may write $X = \Xref^{\emptyset}$ without loss of generality. Thus, for any fixed, finite $U,W\subseteq V$, it suffices to prove that the jump characteristics of $\Xref^W_U$ exist. This occurs precisely when $\Xref^W_U$ is proper and its discontinuity times can be enumerated in increasing order. Suppose $\Xref^W$ solves \eqref{rn:infpart} on the solution space $\fpspace$ supporting the driving Poisson processes $\poiss$. Fix any $u,v \in V$ and $t \in \R_+$. Note that by \eqref{rn:infpart}, 
$\jmp{\Xref^W_v}{t} \subseteq \Emc_v$ for each $v \in V$, 
where $\Emc_v \defeq \{s \in [0,t]: \poiss_v(\{s\}\times \R_+\times \jmps)=1\}$. Thus, for any $u,v \in V$ with $u \neq v$,
\[\jmp{\Xref^W_u}{t}\cap \jmp{\Xref^W_v}{t} \subseteq \Emc_u\cap \Emc_v = \emptyset \te{ a.s.}\]
by the independence of $\poiss_u$ and $\poiss_v$. Therefore $\Xref^W$ is almost surely proper, which implies that $\Xref^W_U$ is proper. Furthermore, because $U$ is finite, $\X^U$ is discrete and equipped with a complete metric. Therefore, $|\jmp{\Xref^W_U}{t}| < \infty$ for every $t \in \R_+$, which shows that the jump characteristics of $\Xref^W_U$ exist.
\end{proof}

\ind
Our next result is a general change of measure result that characterizes the Radon-Nikodym derivative of the law of the solution to the SDE \eqref{mod:infpart}-\eqref{ic} with respect to the reference process $\Xref^W$ in \eqref{rn:infpart} in terms of the jump characteristics of $\Xref^W_W$, which are a.s. well defined by Lemma \ref{proper:a.s.}. 
\begin{proposition}
\label{rn:girs}
Let $G$ be a deterministic (not necessarily locally finite) graph. Suppose the jump rate function family $\rateset \defeq\{\rate^v_j\}_{v\in V,j \in \jmps}$ satisfies conditions 2 and 3 of Assumption \ref{mod:regular}. Let $W\subseteq V$ be any finite set and denote $\mref^W \defeq \law(\vms,\Xref^W)$ where $\Xref^W$ is any weak solution to the SDE \eqref{rn:infpart} for the initial data pair $(\vms,\vtx)$. Also assume that for any $(t,j,v) \in \R_+\times\jmps\times V$, 
\begin{equation}
\label{rn:localint}
\int_0^t \rate^v_j(s,\vms,\Xref^W)\,ds < \infty \te{ a.s..}
\end{equation}
Define the filtration $\nfilt \defeq \hfiltm^{\vms}\vee\hfilt^{\Xref^W}$ and define the process $L^W = (L_t^W)_{t \geq 0}$ as follows: 
\begin{equation}
\label{rn:girsLt}
L^{W}_t \defeq \left[\prod_{0 < \thi{k}\leq t} \rate^{\vi{k}}_{\ji{k}}(\thi{k},\vms,\Xref^W)\right]\exp\left(-\sum_{(j,v) \in \jmps\times W}\int_{(0,t]} \left(\rate^v_j(s,\vms,\Xref^W) - 1\right)\,ds\right),
\end{equation}
where $\{(\thi{k},\ji{k},\vi{k})\}$ are the jump characteristics of $\Xref^W_W$. 
Then $L^{W}$ is a $\nfilt$-local martingale. Moreoever, if $L^{W}$ is also a $\nfilt$-martingale, then there exist a measure $\alt{\mu}\in \P((\mksp\times\cad)^V)$ and a weak solution $X$ to the SDE \eqref{mod:infpart}-\eqref{ic} for the initial data pair $(\vms,\vtx)$ such that
\[ \frac{d\alt{\mu}_{t-}}{d\mref^W_{t-}}(\vms,\Xref^W[t)) = L^{W}_{t-}, \quad t > 0, \mbox{a.s.}, \]
where $\alt{\mu}_{t-}$ and $\mref^W_{t-}$ are the restrictions of the respective measures $\alt{\mu}$ and $\mref^W$ to the space $\borel((\mksp\times \cad_{t-})^V)$, and $\law(\vms,X) = \alt{\mu}$.
\end{proposition}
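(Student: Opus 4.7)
The plan is to recognize $L^W$ as the Dol\'eans--Dade exponential associated with a Girsanov-type change of intensity on the marked point process of jumps at vertices in $W$, and then to Poissonize the resulting jump process on an enlarged space to produce the desired weak solution.

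First, invoking Proposition \ref{duo:duality} together with Lemma \ref{proper:a.s.}, I represent the jumps of $\Xref^W_W$ as a $\nfilt$-marked point process $N^W$ on $(0,\infty)\times\jmps\times W$ whose atoms are the jump characteristics $\{(\thi{k},\ji{k},\vi{k})\}$. By the form of the reference SDE \eqref{rn:infpart} at vertices in $W$, under $\mref^W$ the measure $N^W$ is an $\nfilt$-Poisson point process with intensity $ds\otimes\Sm{\jmps}\otimes\Sm{W}$, while for $v\notin W$ the jump point process of $\Xref^W_v$ has $\nfilt$-predictable intensity $\rate^v_j$ and, by properness (Lemma \ref{proper:a.s.}), shares no common atoms with $N^W$. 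Rewriting
\[
L^W_t = \exp\biggl(\sum_{(v,j)\in W\times\jmps}\Bigl[\int_{(0,t]}\log\rate^v_j(s,\vms,\Xref^W)\,dN_{v,j}(s) - \int_{(0,t]}\bigl(\rate^v_j(s,\vms,\Xref^W)-1\bigr)\,ds\Bigr]\biggr),
\]
the Dol\'eans--Dade formula then yields
\[
L^W_t = 1 + \int_{(0,t]\times\jmps\times W} L^W_{s-}\bigl(\rate^v_j(s,\vms,\Xref^W)-1\bigr)\,\bigl(N^W - ds\otimes\Sm{\jmps}\otimes\Sm{W}\bigr)(ds,dj,dv).
\]
Since $\rate^v_j$ is $\nfilt$-predictable (by conditions 2 and 3 of Assumption \ref{mod:regular}) and a.s.\ locally integrable by \eqref{rn:localint}, the compensated stochastic integral on the right is a $\nfilt$-local martingale, and hence so is $L^W$.

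Assuming now that $L^W$ is a true $\nfilt$-martingale, the family $\{L^W_{t-}\cdot\mref^W_{t-}\}_{t>0}$ is consistent under the natural restriction maps, and a standard projective-limit argument on $\{(\mksp\times\cad_{t-})^V\}_{t>0}$ yields a unique $\alt{\mu}\in\Pmc((\mksp\times\cad)^V)$ with $d\alt{\mu}_{t-}/d\mref^W_{t-} = L^W_{t-}$ when evaluated at the canonical coordinate $(\vms,\Xref^W[t))$. Applying the Girsanov theorem for marked point processes to $N^W$, under $\alt{\mu}$ its predictable intensity becomes $\rate^v_j(s,\vms,X)\,ds\otimes\Sm{\jmps}\otimes\Sm{W}$, where $X$ denotes the canonical second coordinate. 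Crucially, because $L^W$ jumps only at times of $N^W$, and by properness these times are disjoint from the jump times of $X_v$ for $v\notin W$, the orthogonality of $L^W$ and the compensated point processes at $v\notin W$ ensures their predictable intensities under $\alt{\mu}$ remain $\rate^v_j$. Thus, under $\alt{\mu}$, the canonical coordinate $X$ has exactly the jump intensities required by \eqref{mod:infpart} at every vertex.

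To conclude, I enlarge $((\mksp\times\cad)^V,\alt{\mu})$ by independent auxiliary uniforms and Poissonize: for each $v\in V$ and each jump atom $(\thi{k},\ji{k})$ of $X_v$, place a point of a new process $\poiss^*_v$ at $(\thi{k},U_k,\ji{k})$ with $U_k$ drawn uniformly on $(0,\rate^v_{\ji{k}}(\thi{k},\vms,X)]$, and superimpose independent Poisson noise on the complement of the rate graphs. Standard Poissonization arguments show that $\poiss^*_v$ is a unit-intensity Poisson point process on $\R_+^2\times\jmps$ (with respect to an appropriate augmented filtration) and that $(X,\poiss^*)$ satisfies \eqref{mod:infpart}--\eqref{ic}, producing the desired weak solution with $\law(\vms,X)=\alt{\mu}$. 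The main obstacle I anticipate is the rigorous identification of the predictable intensities under $\alt{\mu}$ at vertices $v\notin W$: because $\Xref^W$ depends globally on the SDE through the rates $\rate^v_j$ with $v\in W$, the change of measure can a priori couple all vertices, and the argument that intensities at $v\notin W$ are preserved leans essentially on properness (Lemma \ref{proper:a.s.}) and the multivariate extension of Girsanov's theorem for point processes with orthogonal jump supports. A secondary subtlety is verifying that $\{L^W_{t-}\cdot\mref^W_{t-}\}_{t>0}$ extends to a bona fide probability measure on $(\mksp\times\cad)^V$, which is delicate when $L^W$ is not uniformly integrable.
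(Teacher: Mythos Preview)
Your proposal is essentially correct, but the route differs from the paper's in ways worth noting.

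The paper does not split the analysis into ``$v\in W$'' and ``$v\notin W$'' pieces. Instead, for each finite $U\supseteq W$ it considers the dual marked point process $\wh{P}_U$ of $\Xref^W_U$ on \emph{all} of $U$, computes its $(\PP,\nfilt)$-local characteristics $(\lambdaref_U,\Psiref_U)$ via Proposition~\ref{duo:duality}(a), and then rewrites $L^W$ in the form of Br\'emaud's change-of-measure theorem (quoted as Theorem~\ref{rnpf:BreGirs}) with multipliers $\theta_U, h_U$. The key observation is that $\theta_U(t)h_U(t,z)=1$ whenever $z\in\jmps\times(U\setminus W)$, so the same $L^W$ serves for every such $U$, and one reads off from Theorem~\ref{rnpf:BreGirs}(b) simultaneously that the intensity under the new measure is $\rate^v_j$ at \emph{every} $v\in U$. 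Letting $U$ exhaust $V$ gives the intensities on all of $V$ at once, with no need for your orthogonality argument. Your approach via Dol\'eans--Dade plus the observation that $[L^W,\cdot]$ vanishes against point processes at $v\notin W$ is valid, but the paper's device of enlarging the mark space to $U$ is cleaner and sidesteps precisely the ``main obstacle'' you anticipate.

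Second, the paper does not build $\alt{\mu}$ by a projective-limit argument on $\{(\mksp\times\cad_{t-})^V\}_{t>0}$. It simply defines $\alt{\PP}_t$ on the original probability space by $d\alt{\PP}_t/d\PP_t=L(t)$, which is automatically consistent because $L$ is a martingale, and then sets $\alt{\mu}=\law_{\alt{\PP}}(\vms,\Xref^W)$. This avoids your ``secondary subtlety'' entirely: no tightness or Kolmogorov-extension issues arise because one never leaves the ambient space $(\Omega,\F)$.

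Your final Poissonization step is exactly what the paper packages as Proposition~\ref{duo:duality}(b), so there the two proofs coincide. In summary: both arguments work, but the paper's choices---applying Br\'emaud's theorem on $U\supseteq W$ rather than just $W$, and changing measure on $\Omega$ rather than on path space---eliminate both difficulties you flag.
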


\ind When $G$ is finite, this result is well known if the IPS is Markov (e.g. \cite[Example 15.2.10]{Bre20}). When $G$ is finite and the IPS is non-Markov, the result can be deduced by combining duality characterizations of the IPS in terms of a point process (as discussed in Section \ref{key:duo}) with standard change of measure theorems for point processes
(e.g., \cite[Theorem 15.2.7]{Bre20}, \cite[Proposition 14.4.III]{DalVer08} or \cite[VIII T10]{Bre81}) and 
simple estimates to prove that the candidate Radon-Nikodym derivative is indeed a martingale (rather than just a local martingale). However, the case when $G$ is infinite is more subtle even for Markov IPS due to the possibility of explosions, 
and Proposition \ref{rn:rnd} in fact addresses the more general
case when the graph may not even be locally finite, necessitating further care. 
This more general setting is 
of interest for the study of analogous properties of IPS on random graphs. In fact, by letting $G$ be the infinite, complete graph, Proposition \ref{rn:girs} becomes applicable to solutions of a more general class of infinite-dimensional Poisson-driven SDEs not necessarily arising as locally interacting processes with
respect to some graph. 
The proof of Proposition \ref{rn:girs} relies on duality characterizations and is hence deferred to Section \ref{rnpf}. 

\begin{remark}
\label{rn:nWP}
Note that Proposition \ref{rn:girs} does not require uniqueness in law of solutions to the SDEs \eqref{mod:infpart}-\eqref{ic} and \eqref{rn:infpart}. It simply establishes a correspondence between the laws of specific weak solutions to the two SDEs when
$L^W$ is a $\nfilt$-Martingale. 
\end{remark}

\ind 
However, in the presence of well-posedness, as a corollary of Proposition \ref{rn:girs}, we obtain
the following change of measure result, which is used to prove Proposition \ref{gen:MRF}. 
\begin{corollary}
\label{rn:rnd}
Suppose $G$, $(\vms,\vtx)$ and $\rateset$ satisfy Assumptions \ref{mod:regular}, \ref{mod:WPbd} and \ref{mod:WPassu}. For any $n \in \N$, 
let $V_n$ be as in \eqref{ref:Xndef}, let $\Xref^n \defeq \Xref^{V_n}$ and $X$ be solutions to \eqref{mod:infpart}-\eqref{ic} and \eqref{rn:infpart}, respectively, and let $\mref^n =\law(\vms,\Xref^n)$ and $\mu = \law(\vms,X)$. Then, for any $t \in (0,\infty),$
\begin{equation}
\label{rn:rngen}
\frac{d\mu_{t-}}{d\mref^n_{t-}}(\vms,\Xref^n[t)) =
\left[\prod_{0 < \thi{k}^n < t} \rate^{\vi{k}^n}_{\ji{k}^n}(\thi{k}^n,\vms,\Xref^n)\right]
\exp\left(-\sum_{(j,v) \in \jmps\times V_n}\int_{(0,t)} \left(\rate^v_j(s,\vms,\Xref^n) - 1\right)\,ds\right) a.s., 
\end{equation}
where $\{(\thi{k}^n,\ji{k}^n,\vi{k}^n)\}_k$ is the set of jump characteristics of $\Xref^n_{V_n}$ in the sense of Definition \ref{duo:jumpchar}.
\end{corollary}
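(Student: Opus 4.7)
The plan is to derive Corollary \ref{rn:rnd} as an essentially immediate consequence of Proposition \ref{rn:girs} applied with $W = V_n$. Once I verify the two hypotheses of that proposition (the local integrability \eqref{rn:localint} and the true martingale property of $L^{V_n}$), Proposition \ref{rn:girs} produces a measure $\tilde{\mu}$ and a weak solution $X'$ to \eqref{mod:infpart}--\eqref{ic} whose joint law with $\vms$ equals $\tilde{\mu}$ and satisfies the claimed Radon--Nikodym identity. Strong well-posedness (Assumption \ref{mod:WPassu}) then forces $\tilde{\mu} = \mu$, because uniqueness in law holds for \eqref{mod:infpart}--\eqref{ic}.

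First I would check \eqref{rn:localint}. Since $G$ is locally finite by standing assumption, $|\cl{v}|$ is finite for every $v \in V$, and Assumption \ref{mod:WPbd} gives the pointwise deterministic bound $\rate^v_j(s,\cdot,\cdot) \le C(|\cl{v}|, s)$, which is locally integrable in $s$; hence $\int_0^t \rate^v_j(s,\vms,\Xref^n)\,ds \le t\, C(|\cl{v}|,t) < \infty$ almost surely.

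The main step is upgrading the $\nfilt$-local martingale property of $L^{V_n}$ to a true martingale property. Here I would exploit that $V_n$ is finite and that on $V_n$ the rates are uniformly bounded: set $K_n(t) := \max_{v \in V_n} C(|\cl{v}|, t) < \infty$, which is finite because $V_n$ is a finite ball in the locally finite graph $G$. The exponential factor in \eqref{rn:girsLt} is then bounded below by $\exp(-|\jmps|\cdot|V_n|\cdot K_n(t)\cdot t)$ and above by $\exp(|\jmps|\cdot|V_n|\cdot K_n(t)\cdot t)$. By Lemma \ref{proper:a.s.}, the jump characteristics $\{(\thi{k},\ji{k},\vi{k})\}$ of $\Xref^n_{V_n}$ are almost surely well-defined, and the bound on the rates means the number of jumps of $\Xref^n_{V_n}$ on $[0,t]$ is stochastically dominated by a Poisson variable with parameter $|\jmps|\cdot|V_n|\cdot K_n(t)\cdot t$. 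A standard localization by the stopping times $\tau_m := \inf\{t : |\jmp{\Xref^n_{V_n}}{t}| \ge m\}$ makes each stopped process $L^{V_n}_{\cdot \wedge \tau_m}$ bounded, hence a uniformly integrable martingale; since $\tau_m \to \infty$ almost surely and $L^{V_n}$ is a nonnegative supermartingale (being a nonnegative local martingale starting at $1$), it suffices to verify $\E[L^{V_n}_t] = 1$ for each fixed $t$. This equality can be obtained by bounded/dominated convergence applied to $\E[L^{V_n}_{t \wedge \tau_m}] = 1$ upon noting that the bound furnished by the above Poisson domination renders the family $\{L^{V_n}_{t\wedge \tau_m}\}_m$ uniformly integrable.

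Having established these two ingredients, I invoke Proposition \ref{rn:girs} to produce $\tilde{\mu} \in \Pmc((\mksp\times\cad)^V)$ and a weak solution $X'$ of \eqref{mod:infpart}--\eqref{ic} with $\law(\vms,X') = \tilde{\mu}$ and
\[
\frac{d\tilde{\mu}_{t-}}{d\mref^n_{t-}}(\vms,\Xref^n[t)) \;=\; L^{V_n}_{t-}, \qquad t > 0.
\]
Assumption \ref{mod:WPassu} ensures the SDE \eqref{mod:infpart}--\eqref{ic} is strongly well-posed for the initial data $\vms$; by Remark \ref{mod:whystrong} and pathwise uniqueness this yields uniqueness in law, so $\tilde{\mu} = \mu = \law(\vms,X)$, and substituting the explicit form of $L^{V_n}_{t-}$ from \eqref{rn:girsLt} with the open interval $(0,t)$ for the sum and product (since we are evaluating at $t-$) gives \eqref{rn:rngen}. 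The principal obstacle is the true-martingale verification; everything else is bookkeeping, and the finiteness of $V_n$ combined with Assumption \ref{mod:WPbd} makes that verification routine.
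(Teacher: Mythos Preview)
Your approach is essentially the same as the paper's: apply Proposition \ref{rn:girs} with $W=V_n$, verify the local martingale is a true martingale via a jump-counting localization and a Poisson moment bound, then invoke strong well-posedness to identify $\tilde\mu$ with $\mu$. One small correction: the number of jumps of $\Xref^n_{V_n}$ on $[0,t]$ is not merely ``stochastically dominated by a Poisson with parameter $|\jmps|\,|V_n|\,K_n(t)\,t$'' via the bound on $\rate^v_j$; the jump intensity of $\Xref^n_v$ for $v\in V_n$ is \emph{identically} $1$ by construction of the reference process \eqref{rn:infpart}, so $|\jmp{\Xref^n_{V_n}}{t}|$ is exactly $\mathrm{Poisson}(|V_n|\,|\jmps|\,t)$ --- this is what the paper uses, and it is what makes the dominating variable $C(d,t)^{|\jmp{\Xref^n_{V_n}}{t}|}\exp(t|V_n||\jmps|)$ integrable. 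Your phrasing suggests you are bounding the reference jump count by the original rates, which is backwards (and would fail if $K_n(t)<1$). With that fix the argument goes through exactly as in the paper.
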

\begin{proof} Fix $n \in \N$ and let $\nfilt \defeq \hfiltm^{\vms}\vee\hfilt^{\Xref^n}$. Applying Proposition \ref{rn:girs} with $W = V_n$, the process
$L^{n} \defeq L^{V_n}$ in \eqref{rn:girsLt} 
is a $\nfilt$-local martingale. We first prove that it is in fact a $\nfilt$-martingale. Let $\{\theta_{\ell}\}_{{\ell} \in \N}$ be a localizing sequence for $L^n$ and for each ${\ell} \in \N$, let $\tau_{\ell} :=\inf\{t: |\jmp{\Xref^n_{V_n}}{t}| \geq {\ell}\}$ where the infimum of an empty set is taken to be infinite. Note that $\tau_{\ell}$ is a $\nfilt$-stopping time. Because $\Xref^n_{V_n}$ is a.s. c\`adl\`ag and $\X^{V_n}$ is a discrete space, $\Xref^n_{V_n}$ can only have finitely many discontinuities in any finite time interval. Thus, $\lim_{{\ell}\to\infty} \tau_{\ell} = \infty$ a.s., so $\{\tau_{\ell}\wedge \theta_{\ell}\}_{{\ell} \in \N}$ is also a localizing sequence for $L^n$. Recall the definition of $C: \N\times\R_+\to\R_+$ from Assumption \ref{mod:WPbd} and let $d := \max\{|\cl{v}|:v \in V_n\}$. Then for each $t \in \R_+$,
\[\sup_{{\ell} \in \N}|L^n_{t\wedge \tau_{\ell}\wedge \theta_{\ell}}| \leq |C(d,t)|^{{\ell}\wedge |\jmp{\Xref^n_{V_n}}{t}|}\exp\left(t|V_n||\jmps|\right) \leq |C(d,t)|^{|\jmp{\Xref^n_{V_n}}{t}|}\exp\left(t|V_n||\jmps|\right).\]
However, note that $|\jmp{\Xref^n_{V_n}}{t}|\sim \te{Poiss}(|V_n||\jmps|t)$ because by \eqref{mod:refrate} $\rateref^{V_n,v}_j(\cdot,\vms,\Xref^n) = 1$ whenever $(j,v) \in \jmps\times V_n$. Since $\jmps$ is assumed to be finite, this implies
\[\ex{\sup_{{\ell} \in \N} |L^n_{t\wedge \tau_{\ell}\wedge \theta_{\ell}}|} \leq \exp\left(t|V_n||\jmps|\right)\ex{|C(d,t)|^{|\jmp{\Xref^n_{V_n}}{t}|}} < \infty,\]
which shows that for each $t \geq 0$, the sequence $\{L^n_{t\wedge \tau_{\ell}\wedge \theta_{\ell}}\}_{{\ell} \in \N}$ is dominated by an integrable random variable, and is therefore uniformly integrable. By \cite[Proposition 1.8]{ChuWil14}, it follows that $L^n$ is a $\nfilt$-martingale and for any $t \in \R_+$, $\exnd{L^n_t} = 1$. Proposition \ref{rn:girs} then implies that the measure $\alt{\mu} \in \P((\mksp\times\cad)^V)$ defined by
$d\alt{\mu}_{t-}/d\mref^{n}_{t-}(\vms,\Xref^n) = L^n_{t-}$ a.s. for all $t > 0$ is the law of a weak solution $\alt{X}$ to \eqref{mod:infpart}-\eqref{ic} for the initial data $\vms$ on the graph $G$. By Assumption \ref{mod:WPassu}, the SDE \eqref{mod:infpart}-\eqref{ic} is
well-posed, which implies that $(\vms,\alt{X}) \deq (\vms,X)$.
This shows 
$d\mu_{t-}/d\mref^n_{t-}(\vms,\Xref^n) = d\alt{\mu}_{t-}/d\mref^n_{t-}(\vms,\Xref^n) = L^n_{t-}$ a.s. for every $t > 0$, as desired.
\end{proof}

\subsection{Proof of Proposition \ref{gen:MRF}}
\label{MRF:rnd}

As mentioned earlier, the proof of the preservation of the MRF property over any finite time interval proceeds by transfering analogous conditional 
independence properties for the reference processes established in Proposition \ref{ref:CI} to the original
IPS. This makes use of a modified version of a result from \cite{PutSch85}
stated in Lemma \ref{lem:PutSch} below. 
\begin{lemma} 
\label{lem:PutSch}
Let $(\mspacenew)$ be a measurable space, let 
$\pfiltnew_i \subset \pfiltnew$, $i = 0, 1, 2$, be sub-$\sigma$-algebras, and let
$\Pnew_0$ and $\Pnew_1$ be two probability measures on
$(\mspacenew)$ such that $\Pnew_1 \ll \Pnew_0$.
Assume that under $\Pnew_0$, $\pfiltnew_1$ and $\pfiltnew_2$ are
conditionally independent given $\pfiltnew_0$. If in addition, the Radon-Nikodym derivative $\RNnew \defeq d\Pnew_1/d\Pnew_0$ with respect to
$\vee_{i=1}^3\filtnew_i$ satisfies $\RNnew = \RNnew_1 \RNnew_2$ almost surely for some $\pfiltnew_i\vee\pfiltnew_0$-measurable random variables $\RNnew_i$, $i = 1, 2,$ then under $\Pnew_1$, $\pfiltnew_1$ and $\pfiltnew_2$ are also 
conditionally independent given $\pfiltnew_0$.
\end{lemma}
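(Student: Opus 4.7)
The plan is to show that for any pair of bounded random variables $Y_i$ that are $\pfiltnew_i$-measurable, $i = 1, 2$, the $\Pnew_1$-conditional expectation factorizes as
\[
\Emb_1[Y_1 Y_2 \mid \pfiltnew_0] = \Emb_1[Y_1 \mid \pfiltnew_0]\, \Emb_1[Y_2 \mid \pfiltnew_0] \quad \Pnew_1\te{-a.s.},
\]
where $\Emb_i$ denotes expectation under $\Pnew_i$; by a standard monotone class argument this yields the required $\Pnew_1$-conditional independence of $\pfiltnew_1$ and $\pfiltnew_2$ given $\pfiltnew_0$.

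The calculation rests on two ingredients. First, the abstract Bayes formula: since $\Pnew_1 \ll \Pnew_0$ with density $\RNnew$, for any bounded measurable $Y$,
\[
\Emb_1[Y \mid \pfiltnew_0] = \frac{\Emb_0[\RNnew Y \mid \pfiltnew_0]}{\Emb_0[\RNnew \mid \pfiltnew_0]} \quad \Pnew_1\te{-a.s.},
\]
with denominator that is $\Pnew_1$-a.s. positive by the standard computation
\[
\Pnew_1\left(\Emb_0[\RNnew \mid \pfiltnew_0] = 0\right) = \Emb_0\left[\RNnew\, \indic{\Emb_0[\RNnew \mid \pfiltnew_0] = 0}\right] = 0.
\]
Second, since $\RNnew_i Y_i$ is $\pfiltnew_i \vee \pfiltnew_0$-measurable for $i = 1, 2$ and $\pfiltnew_1, \pfiltnew_2$ are $\Pnew_0$-conditionally independent given $\pfiltnew_0$, the conditional independence gives
\[
\Emb_0[\RNnew_1 Y_1 \RNnew_2 Y_2 \mid \pfiltnew_0] = \Emb_0[\RNnew_1 Y_1 \mid \pfiltnew_0]\, \Emb_0[\RNnew_2 Y_2 \mid \pfiltnew_0].
\]
Specializing this to $Y_1 = Y_2 = 1$ and using $\RNnew = \RNnew_1 \RNnew_2$ yields the factorization $\Emb_0[\RNnew \mid \pfiltnew_0] = \Emb_0[\RNnew_1 \mid \pfiltnew_0]\, \Emb_0[\RNnew_2 \mid \pfiltnew_0]$, from which each $\Emb_0[\RNnew_i \mid \pfiltnew_0]$ is $\Pnew_1$-a.s. strictly positive.

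Combining these ingredients in the Bayes formula applied to $Y_1 Y_2$,
\[
\Emb_1[Y_1 Y_2 \mid \pfiltnew_0] = \frac{\Emb_0[\RNnew_1 Y_1 \mid \pfiltnew_0]}{\Emb_0[\RNnew_1 \mid \pfiltnew_0]} \cdot \frac{\Emb_0[\RNnew_2 Y_2 \mid \pfiltnew_0]}{\Emb_0[\RNnew_2 \mid \pfiltnew_0]},
\]
and applying Bayes again with $Y = Y_i$ (together with the same factorization of the denominator) identifies the two ratios as $\Emb_1[Y_1 \mid \pfiltnew_0]$ and $\Emb_1[Y_2 \mid \pfiltnew_0]$, respectively, giving the desired factorization. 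The argument is a purely algebraic manipulation of conditional expectations; the only mildly delicate point is the $\Pnew_1$-a.s. positivity of the individual denominators $\Emb_0[\RNnew_i \mid \pfiltnew_0]$, which falls out automatically from the product formula above, so I do not anticipate any substantive obstacle.
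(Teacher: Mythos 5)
Your proof is correct, and it shares the same two core ingredients as the paper's argument — the abstract Bayes formula and the factorization $\RNnew = \RNnew_1\RNnew_2$ combined with conditional independence under $\Pnew_0$ — but it routes through a different equivalent criterion for conditional independence. You verify the product formula $\Emb_1[Y_1Y_2\mid\pfiltnew_0]=\Emb_1[Y_1\mid\pfiltnew_0]\Emb_1[Y_2\mid\pfiltnew_0]$ by applying Bayes with conditioning $\sigma$-algebra $\pfiltnew_0$ and factoring both numerator and denominator; this requires you to separately establish that the individual conditional expectations $\Emb_0[\RNnew_i\mid\pfiltnew_0]$ are $\Pnew_1$-a.s.\ finite and positive, which you correctly deduce from the factorization $\Emb_0[\RNnew\mid\pfiltnew_0]=\Emb_0[\RNnew_1\mid\pfiltnew_0]\Emb_0[\RNnew_2\mid\pfiltnew_0]$ and the standard positivity argument. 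The paper instead applies Bayes with conditioning $\sigma$-algebra $\pfiltnew_0\vee\pfiltnew_2$ and shows $\Emb_1[Z\mid\pfiltnew_0\vee\pfiltnew_2]$ is $\pfiltnew_0$-measurable for bounded $\pfiltnew_1$-measurable $Z$; there the only cancellation needed is of the pointwise factor $\RNnew_2$, so the individual conditional expectations of $\RNnew_1$ and $\RNnew_2$ need not be discussed. Both routes are valid and of comparable length; the paper's avoids the extra step of controlling $\Emb_0[\RNnew_i\mid\pfiltnew_0]$ individually, while yours is perhaps the more "symmetric" and self-explanatory formulation. One small point worth making explicit in your write-up: you implicitly take $\RNnew_1,\RNnew_2\ge 0$ so that all conditional expectations are unambiguously defined (the paper's proof relies on the same tacit assumption).
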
 
\begin{proof}
If the filtrations were required to be complete, then this result would follow from \cite[Theorem 3.6]{PutSch85} under the stronger assumption that $\Pnew_1$ is equivalent to $\Pnew_0$. However, as elaborated below, the same argument used in \cite{PutSch85} also shows that the result holds under the weaker assumption of a not necessarily complete filtration and only absolute continuity of $\Pnew_1$ with respect to $\Pnew_0$ (rather than equivalence). Let $Z$ be any bounded, $\pfiltnew_1$-measurable random variable. Then, applying \cite[Proposition B.41]{Bjo20} and using the $\Pnew_0$-conditional independence of $\pfiltnew_1$ and $\pfiltnew_2$ given $\pfiltnew_0$ and the $\pfiltnew_i$-measurability of $\RNnew_i$, $i=1,2,$ we have $\Pnew_1$-a.s., 
\begin{align*}
\exmu{\Pnew_1}{Z|\pfiltnew_0\vee\pfiltnew_2} &= \frac{\exmu{\Pnew_0}{Z\RNnew_1\RNnew_2|\pfiltnew_0\vee\pfiltnew_2}}{\exmu{\Pnew_0}{\RNnew_1\RNnew_2|\pfiltnew_0\vee\pfiltnew_2}} = \frac{\RNnew_2\exmu{\Pnew_0}{Z\RNnew_1|\pfiltnew_0\vee\pfiltnew_2}}{\RNnew_2\exmu{\Pnew_0}{\RNnew_1|\pfiltnew_0\vee\pfiltnew_2}} = \frac{\exmu{\Pnew_0}{Z\RNnew_1|\pfiltnew_0}}{\exmu{\Pnew_0}{\RNnew_1|\pfiltnew_0}}. 
\end{align*}
Thus, $\exmu{\Pnew_1}{Z|\pfiltnew_0\vee\pfiltnew_2}$ is $\pfiltnew_0$-measurable and therefore $\Pnew_1$-a.s. equal to $\exmu{\Pnew_1}{Z|\pfiltnew_0}$. Since $Z$ is arbitrary, $\pfiltnew_1$ and $\pfiltnew_2$ are conditionally independent given $\pfiltnew_0$ under $\Pnew_1$ as well.
\end{proof}

\begin{proof}[Proof of Proposition \ref{gen:MRF}:] Assume $\vms$ forms an $\mdeg$-MRF (respy, $\mdeg$-SGMRF) for some $\mdeg \geq 2$.
Fix $n \in \N$. By Assumption \ref{mod:WPassu}, the SDE \eqref{mod:infpart}-\eqref{ic} and the SDE
\eqref{rn:infpart} with $W = V_n$, are both strongly well-posed. Let 
$X$ and $\Xref^n$ be the respective solutions. For $t > 0$, let $\mu_{t-} \defeq \law(\vms,X[t))$, $\mref^n_{t-} \defeq \law(\vms,\Xref^n[t))$, $\mu \defeq \mu_{\infty-}$ and $\mref^n \defeq \mref^n_{\infty-}$.
Suppose that $A,B,S\subseteq V$ partition $V$ with $|A|<\infty$ (respy, $|S| <\infty$ if $\vms$ is an $\mdeg$-SGMRF) and let $n$ be sufficiently large so that $S = \kneigh{\mdeg}{A} \subseteq V_{n-1}$. By assumption, we have $\vms_A \indp \vms_B|\vms_S$ and so if we fix
$t > 0$, it follows from Proposition \ref{ref:CI} that under $\mref^n_{t-}$,
$\pfiltnew_1 := \borel((\mksp\times\cad_{t-})^A)$
is independent of $\pfiltnew_2 := \borel((\mksp\times\cad_{t-})^B)$ given 
$\pfiltnew_0 := \borel((\mksp\times\cad_{t-})^S)$.

\ind To transfer this to $\mu_{t-}$, we would like to apply Lemma \ref{lem:PutSch} with the following substitutions: 
$(\mspacenew) = \left((\mksp\times\cad_{t-})^V,\borel((\mksp\times\cad_{t-})^V))\right)$, $\pfiltnew_i$, $i = 0, 1, 2$ as just defined,
$\Pnew_0 = \mref^n_{t-}$, $\Pnew_1 = \mu_{t-}$, $\RNnew = d\mu_{t-}/d\mref^n_{t-}$. Also, note that by
Corollary \ref{rn:rnd}, 
$\Pnew_1 \ll \Pnew_0$ on $\borel((\mksp\times\cad_{t-})^V)$ with Radon-Nikodym derivative $\RNnew\defeq d\mu_{t-}/d\mref^n_{t-}$. To verify the remaining condition of Lemma \ref{lem:PutSch}, it only remains to show that $\RNnew$ factorizes in the right way. 
To this end, recall the definition of the local jump rate function family $\alt{\rateset} \defeq\{\locrate^v_j\}_{v \in V,j\in\jmps}$ from Assumption \ref{mod:regular},
and for $v \in V$, define $g_v: (\mksp\times\cad_{t-})^{\gcl{v}{G}} \to \R_+$ as follows:
\[ g_v(\dvms,x) := \left[\prod_{0<\thx{k}{x_v}< t} \locrate_{\jx{k}{x_v}}^v(\thx{k}{x_v},\dvms,x)\right]\exp\left(- \sum_{j \in\jmps}\int_{(0,t)} (\locrate^v_j(t,\dvms,x) - 1)\,ds\right) \]
if $x$ is proper, and set $g_v(\dvms,x) = 0$ for any other $x \in \cad_{t-}$.
Letting $\{(\thi{k}^n,\ji{k}^n,\vi{k}^n)\}_{k \in \mathbb{N}}$ denote the jump characteristics of $\Xref^n_{V_n}$ (which are
a.s. well defined by Lemma \ref{proper:a.s.}), Corollary \ref{rn:rnd} and the condition 1 of Assumption \ref{mod:regular} imply 
\begin{align}
\label{rnd:dmudmudef}
\frac{d\mu_{t-}}{d\mref^n_{t-}}
\left(\vms,\Xref^n[t)\right)&=\prod_{v \in V_n}g_v\left(\vms_{\cl{v}},\Xref^n_{\cl{v}}[t)\right) \te{ a.s..}
\end{align}
$\frac{d\mu_{t-}}{d\mref^n_{t-}}:(\mksp\times\cad_{t-})^V\to \R_+$ may be defined for each $(\dvms,x)$ as
\begin{align}
\frac{d\mu_{t-}}{d\mref^n_{t-}}(\dvms,x) &\defeq \prod_{v \in V_n}g_v(\dvms_{\cl{v}},x_{\cl{v}})\nonumber\\
&=\left(\prod_{v \in \cl{A} \cap V_n} g_v\left(\dvms_{\cl{v}},x_{\cl{v}}\right)\right)\left(\prod_{u \in V_n\setminus \cl{A}} g_u\left(\dvms_{\cl{u}},x_{\cl{u}}\right)\right).
\label{rnd:factor}
\end{align}
Since $\mdeg \geq 2$ and $\kgneigh{\mdeg}{A}{G}= S$, the term in the first bracket on the right-hand side of \eqref{rnd:factor} depends only on $(\vms_{A\cup S},\Xref^n_{A\cup S}[t))$
and is thus $\borel((\mksp\times\cad_{t-})^{A\cup S} = \pfiltnew_1\vee\pfiltnew_0$-measurable while the term in the
second bracket depends only on $(\vms_{B\cup S},\Xref^n_{B\cup S}[t))$ and is thus $\borel((\mksp\times\cad_{t-})^{B\cup S} = \pfiltnew_2\vee\pfiltnew_0$-measurable. This completes the verification of all the conditions of Lemma \ref{lem:PutSch}, which allows us to conclude that
\begin{equation}
\label{rnd:finTime}
(\vms_{A},X_{A}[t))\indp (\vms_{B},X_{B}[t))|(\vms_S,X_S[t)). 
\end{equation}
Thus, we have shown that $(\vms,X[t))$ forms an $\mdeg$-MRF (respy. $\mdeg$-SGMRF) with respect to $G$.

\ind We now extend this to the infinite time interval to show that the same is true for $(\vms,X)$.
The argument we use is similar to the one applied in the proof of \cite[Theorem 2.4]{LacRamWuMRF21}. Fix an element $a \notin \X$. For any $x \in \cad$ and $t \in \R_+$, we may embed the truncated function $x[t)$ in $\cad^a \defeq \cad(\R_+,\X\cup\{a\})$ by setting $x[t)(s) = a$ whenever $s \geq t$ and $x[t)(s) = x(s)$ when $s < t$. Thus, $x[t)\to x$ as $t \to \infty$ in $\cad^a$. Let $A,B,S\subseteq V$ be a partition such that $S = \kneigh{\mdeg}{A}$ is finite, and if $\mu_\vms$ is not an $\mdeg$-SGMRF, then assume $A$ is also finite. For each $U \subset V$, define $f_U: (\mksp\times\cad^a)^U\to \R$ to be bounded and continuous. Then, as justified below the display, we have 
\begin{align*}
&\ex{f_A(\vms_A,X_A)f_B(\vms_B,X_B)f_S(\vms_S,X_S)}\\
&\ind= \lim_{s_2\to\infty}\lim_{s_1\to\infty}\ex{f_A(\vms_A,X_A[s_2))f_B(\vms_B,X_B[s_2))f_S(\vms_S,X_S[s_1))}\\
&\ind=\lim_{s_2\to\infty}\lim_{s_1\to\infty}\ex{\ex{f_A(\vms_A,X_A[s_2))|\vms_S,X_S[s_1)}\ex{f_B(\vms_B,X_B[s_2))|\vms_S,X_S[s_1)}f_S(\vms_S,X_S[s_1))}\\
&\ind=\lim_{s_2\to\infty}\ex{\ex{f_A(\vms_A,X_A[s_2))|\vms_S,X_S}\ex{f_B(\vms_B,X_B[s_2))|\vms_S,X_S}f_S(\vms_S,X_S)}\\
&\ind=\ex{\ex{f_A(\vms_A,X_A)|\vms_S,X_S}\ex{f_B(\vms_B,X_B)|\vms_S,X_S}f_S(\vms_S,X_S)},
\end{align*}
where the first equality uses the bounded convergence theorem and continuity of the functions $f_A,f_B,f_S$,
the second equality uses the relation \eqref{rnd:finTime} with $t = s_1$, the third equality
uses $\sigma(\vms_S,X_S) = \vee_{s \in \R_+} \sigma(\vms_S,X_S[s))$, the Doob martingale convergence theorem, the continuity of $f_S$ and the bounded convergence theorem, and the final equality holds due to the bounded convergence theorem and the continuity of $f_A$ and $f_B$. 
This proves $(\vms,X)$ forms an $\mdeg$-MRF (respy. $\mdeg$-SGMRF) with respect to $G$, as desired. 
\end{proof}

\section{Proof of the Radon-Nikodym Derivative Characterization}
\label{sec:rn:rnd}

The goal of this section is to prove the change of measure result in
Proposition \ref{rn:girs}. The proof, which is given in Section \ref{rnpf}, 
relies on a key duality characterization of the IPS that is first established in Proposition \ref{duo:duality} of Section \ref{key:duo}.

\subsection{Dual Processes}
\label{key:duo}

We start in Section \ref{duo:pp} by introducing some standard notation related
to point processes. 

\subsubsection{Point Processes}
\label{duo:pp}

Fix a filtered probability space $\fpspace$ supporting a filtration $\nfilt\subseteq \filt$ and a Polish space $\Pol$. Recall from Section \ref{nota} that $\Nms(\Pol)$ is the (Polish) space of locally finite, nonnegative integer-valued measures on $\Pol$, equipped with the weak-hash topology. A point process $P$ on $\Pol$ is a random element taking values in $\Nms(\Pol)$. For every bounded set $B \subseteq \Pol$, there exists a finite set of points $\{z_i\}_{i=1}^N \subseteq B$ such that $\rpp(\{z_i\}) > 0$ for all $i=1,\dots, N$, and $\rpp(B \setminus \{z_i\}_{i=1}^N) = 0$. These points are called \emph{events}. A point process $P$ on $\Pol$ is \emph{simple} if $\sup_{z \in \Pol} \rpp(\{z\}) \in \{0,1\}$ almost surely. For $\wh{\Pol} \defeq \R_+\times\Pol$, a point process $\rpp$ on $\wh{\Pol}$ is referred to as a \emph{marked} point process on $\R_+$ with marks in $\Pol$. If $\rpp$ has events $\{(t_i,\rho_i)\}_{i=1}^N\subset \wh{\Pol}$, then $\{\rho_i\}_{i=1}^N$ are said to be the \emph{marks} of $\rpp$. If $\rpp([0,T]\times\Pol) < \infty$ a.s. for all $T \in \R_+$, then $\rpp$ is said to be \emph{non-explosive} or \emph{locally finite}. All point processes $\rpp$ that we consider
will be \emph{simple} in the sense that $\sup_{t \in \R_+}\rpp(\{t\}\times \Pol) \in \{0,1\}$ almost surely. A marked point process $\rpp$ on $\wh{\Pol}$ is said to be $\nfilt$-adapted if for every $t \in \R_+$ and $A \in \borel([0,t]\times \Pol)$, $\rpp(A)$ is $\nfiltm_t$-measurable. For any point process $P$ on $\wh{\Pol}$, $\hfilt^P$ is defined to be the minimal filtration satisfying the usual conditions such that $P$ is $\hfilt^P$-adapted. 

\ind Suppose that $\Pol$ is equipped with a nonnegative, locally finite \emph{reference measure} $\ell$. Then a random function $\Gamma: \Omega\times \R_+\times \Pol \to \R_+$ is said to be $\nfilt$-\emph{mark predictable} if it is measurable with respect to $\Pms(\nfilt)\otimes \borel(\Pol)$, where $\Pms(\nfilt)$ is the predictable $\sigma$-algebra generated by $\nfilt$ \cite[page 379]{DalVer08}. As an immediate consequence, it follows that $t\mapsto \int_{z \in A} \Gamma(t,z)\,\ell(dz)$ is $\nfilt$-predictable for all $A \in\borel(\Pol)$. The $\nfilt$-\emph{intensity} of $\rpp$ with respect to the measure $\ell$ is a $\nfilt$-mark predictable process $\Gamma$ such that for each bounded $A \subseteq \Pol$, the process $t \mapsto \rpp([0,t]\times A) - \int_{z \in A}\int_0^t\Gamma(s,z)\,ds\,\ell(dz)$ is a $\nfilt$-local martingale \cite[Definitions 14.1.I,14.3.I]{DalVer08}. 

\subsubsection{Dual Characterizations}
\label{duo:duo}

Recalling the Definition \ref{duo:jumpchar} of the jump characteristics $\{(\thx{k}{x},\jx{k}{x},\vx{k}{x})\}$
of any proper trajectory $x$, we now define the notion of a dual to $x$. 
\begin{definition}
\label{duo:dualdef}
For any (not necessarily finite) $U \subseteq V$, and any proper $x \in \cad^U$, the \emph{dual} of $x$ is the simple marked point process $p \in \Nms(\R_+\times \jmps\times U)$ with event times $\bigcup_{v \in U}\{\thx{k}{x_v}\}$ and marks $\bigcup_{v \in U}\{(\jx{k}{x_v},\vx{k}{x_v})\}$.
\end{definition}
It is easy to see that the duals of weak solutions to \eqref{mod:infpart}-\eqref{ic} generally exist, see Lemma \ref{duo:dualexist} for a complete proof. 
In what follows, $\Sm{\Smc}$ denotes the counting measure on any countable set $\Smc$.

\begin{proposition}
\label{duo:duality}
Suppose that $G = (V,E)$ is a deterministic (but not necessarily locally finite) graph, and 
the jump rate function family $\rateset \defeq \{\rate^v_j\}_{v\in V,j\in\jmps}$ satisfies conditions 2 and 3 of Assumption \ref{mod:regular} and the
local integrability condition in \eqref{rn:localint}. 
Let $X$ be any weak solution to the SDE \eqref{mod:infpart}-\eqref{ic} for the given initial data pair $(\vms,\vtx)$ on the solution space $\fpspace$ and define $\nfilt \defeq \hfiltm^{\vms} \vee\hfilt^{X}\subseteq \filt$. Then for any deterministic (not necessarily finite) subset $U \subseteq V$, the following properties are satisfied:
\begin{enumerate}[(a)]
\item The dual $P_U$ of $X_U$ has a $\nfilt$-intensity on $\jmps\times U$ with respect to the reference measure $\Sm{\jmps\times U}$ that is given explicitly by 
\begin{equation}
\label{duo:int}
\Lambda_U(t,\rho) \defeq \rate^v_j(t,\vms,X) \quad \te{ for }\quad \rho = (j,v) \in \jmps\times U, \te{ }t > 0. 
\end{equation}
\item Let $\alt{X}$ be an a.s. proper $\X^V$-valued c\`adl\`ag process and let $\alt{\vms}$ be a $\mksp^V$-valued random element, defined on a common probability space 
$\pspace$ and 
such that $\alt{X}_v(0) = \vtx(\alt{\vms}_v)$ for each $v \in V$. Set $\alt{\nfilt} \defeq \hfiltm^{\alt{\vms}}\vee\hfilt^{\alt{X}}$. 
If the dual of $\alt{X}_{U}$ is a simple marked point process $\alt{P}_{U}$ with mark space $\jmps\times U$ and $\alt{\nfilt}$-predictable intensity $\alt{\Lambda}_{U}$ with respect to the reference measure $\Sm{\jmps\times U}$ given by 
\begin{equation}
\label{duo:altint}
\alt{\Lambda}_{U}(t,\rho) \defeq \rate^v_j(t,\alt{X},\alt{\vms}) \quad \te{ for }\quad \rho = (j,v) \in \jmps\times U,\te{ } t > 0, 
\end{equation}
then it is possible to extend the probability space $\pspace$ to support a filtration $\wh{\nfilt}\supseteq \alt{\nfilt}$ satisfying the usual conditions and a collection of i.i.d. $\wh{\nfilt}$-Poisson processes $\alt{\poiss} \defeq \{\alt{\poiss}_v\}_{v \in U}$ on $\R^2\times\jmps$ with intensity measure $\leb^2\times\Sm{\jmps}$ such that $\alt{X}$ satisfies 
\begin{equation}
\label{SDE-dual}
\alt{X}_v(t) = \alt{X}_v(0) + \int_{(0,t]\times \R_+\times \jmps} j \indic{r \leq \rate^v_j(s,\alt{X},\alt{\vms})}\,\alt{\poiss}_v(ds,dr,dj) \te{ for } v \in U.
\end{equation}
\end{enumerate}
\end{proposition}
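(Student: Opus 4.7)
The proof splits into part (a), which identifies the $\nfilt$-intensity of the dual of the given weak solution $X$, and part (b), which runs this construction in reverse to produce driving Poisson measures from a given point process. I will treat them in turn.

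For part (a), my plan is to read the dual $P_U$ directly off the SDE \eqref{mod:infpart}--\eqref{ic}. For each $(j,v) \in \jmps\times U$, the counting process
$$P_U\bigl((0,t]\times\{(j,v)\}\bigr) = \int_{(0,t]\times \R_+\times\jmps} \indic{r\leq \rate^v_j(s,\vms,X)}\indic{j'=j}\,\poiss_v(ds,dr,dj')$$
is, by construction, the number of size-$j$ jumps of $X_v$ in $(0,t]$. The integrand is $\filt$-predictable (it is left-continuous in $s$ by condition 3 of Assumption \ref{mod:regular} and a predictable functional of $(\vms,X)$ by condition 2), and $\poiss_v$ is an $\filt$-Poisson random measure with intensity $\leb^2\otimes\Sm{\jmps}$. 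Standard Poisson stochastic calculus then gives that its $\filt$-compensator is $\int_0^t \rate^v_j(s,\vms,X)\,ds$, which is almost surely finite by \eqref{rn:localint}, so the corresponding compensated process is an $\filt$-local martingale. Because this process is also $\nfilt$-adapted with $\nfilt\subseteq\filt$, a tower-property argument applied after localization upgrades it to an $\nfilt$-local martingale, yielding \eqref{duo:int}. The extension from singletons $\{(j,v)\}$ to general Borel sets in $[0,t]\times \jmps\times U$ is via a monotone class argument, and the well-definedness of $P_U$ as an element of $\Nms(\R_+\times\jmps\times U)$ when $U$ is infinite is invoked from Appendix \ref{Ap:dual}.

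For part (b), the approach is to invert the thinning representation: pair each event of $\alt{P}_U$ with an independent uniform mark to reconstruct the ``accepted'' points of $\alt{\poiss}_v$, then superimpose an independent Poisson process on the complementary ``rejection region'' to supply the ``unused'' points. Concretely, enlarge the probability space to support, independently across $v \in U$, a family $\{U_k^v\}_k$ of i.i.d. $\te{Uniform}[0,1]$ random variables indexed by the events $\{(\thi{k}^v, \ji{k}^v)\}_k$ of the marginal dual $\alt{P}_{\{v\}}$, together with an independent Poisson random measure $Q_v$ on $\R_+^2\times\jmps$ with intensity $\leb^2\otimes\Sm{\jmps}$. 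Set
$$\alt{\poiss}_v = \sum_k \delta_{\bigl(\thi{k}^v,\, U_k^v\cdot \rate^v_{\ji{k}^v}(\thi{k}^v,\alt{\vms},\alt{X}),\, \ji{k}^v\bigr)} + Q_v\bigl(\,\cdot\,\cap\{(s,r,j):\, r > \rate^v_j(s,\alt{\vms},\alt{X})\}\bigr),$$
and let $\wh{\nfilt}$ be the smallest filtration satisfying the usual conditions to which $\alt{\nfilt}$, $\{U_k^v\}$ and $\{Q_v\}_v$ are all adapted. Verifying that each $\alt{\poiss}_v$ is an $\wh{\nfilt}$-Poisson random measure with intensity $\leb^2\otimes\Sm{\jmps}$ reduces to a direct Laplace functional computation, or can be read off from the intensity-theoretic Poisson embedding theorem (e.g. \cite[Theorem T12, Chapter VIII]{Bre81} or \cite[Theorem 14.4.III]{DalVer08}) together with the hypothesis that $\alt{P}_U$ has the prescribed $\alt{\nfilt}$-predictable intensity \eqref{duo:altint}. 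Mutual independence of the $\alt{\poiss}_v$ across $v \in U$ follows from the properness of $\alt{X}$ (so different $v$'s contribute disjoint atoms) together with the independence of the $Q_v$'s and the uniform marks across $v$. The SDE \eqref{SDE-dual} then holds essentially by construction, because at each event $(\thi{k}^v, \ji{k}^v)$ the indicator $\indic{r \leq \rate^v_{\ji{k}^v}(\thi{k}^v,\alt{\vms},\alt{X})}$ equals $1$ by design of the accepted point, while it vanishes on the rejection region.

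The main obstacle across both parts is that the point processes involved may be explosive, since $G$ need not be locally finite and $U$ may be infinite. Classical intensity and Poisson-embedding theorems in \cite{Bre81,DalVer08} are typically stated under a non-explosion hypothesis, so I expect the hardest step to be a careful localization along the $k$-th event of the relevant dual (or along appropriate $\nfilt$-stopping times) followed by an appeal to the explosive extensions established in Appendix \ref{Ap:dual}. A secondary delicate point in part (b) is that since $\rate^v_j$ is only locally integrable and not bounded, the rejection region has infinite $\leb^2$-measure in the $r$-direction, so $Q_v$ must be built as a sigma-finite Poisson random measure and the required independence of the accepted and rejected contributions must be verified on suitable bounded subsets before passing to the limit.
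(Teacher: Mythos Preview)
Your construction is essentially the paper's: for (a) you compensate each coordinate counting process against the driving Poisson measure, and for (b) you build $\alt{\poiss}_v$ by marking the events of $\alt{P}_{\{v\}}$ with independent uniforms and superimposing an independent Poisson measure on the rejection region. Two points are worth sharpening.

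First, for (b) your fallback of ``reading off'' the result from an off-the-shelf Poisson embedding theorem is exactly what the paper warns does not quite work: the cited results in \cite{Bre81,DalVer08} either treat unmarked processes or require the intensity to be adapted to the natural filtration of the point process, whereas here the intensity of $\alt{P}_W$ depends on $(\alt{\vms},\alt{X})$ and is only $\alt{\nfilt}$-predictable. The paper therefore does not cite an embedding theorem but instead verifies directly, via the Campbell-type identity \cite[Theorem~15.1.22]{Bre20}, that for every nonnegative $\wh{\nfilt}$-mark-predictable $H$ one has $\ex{\int H\,d\alt{\poiss}_W}=\ex{\int H\,ds\,dr\,\Sm{\jmps\times W}}$ for each finite $W\subseteq U$; this characterizes $\alt{\poiss}_W$ as a $\wh{\nfilt}$-Poisson process and, crucially, does so for a single common filtration $\wh{\nfilt}$. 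Your Laplace-functional route would also work, but the embedding-theorem shortcut does not.

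Second, your diagnosis of the ``main obstacle'' as explosion is slightly off for (b). The paper's discussion preceding the proof emphasizes that the real subtlety is not explosion per se but ensuring that the $\alt{\poiss}_v$ are Poisson with respect to a \emph{common} filtration $\wh{\nfilt}$ across all $v\in U$; piecing together separate embeddings for each finite $W$ would not obviously yield this. Your construction does handle this correctly (you define one $\wh{\nfilt}$ up front), but the point deserves to be stated as the crux rather than as an afterthought. The sigma-finiteness concern you raise about the rejection region is not an issue: $Q_v$ is a standard Poisson random measure with sigma-finite intensity $\leb^2\otimes\Sm{\jmps}$, and all computations restrict to bounded sets anyway.
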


\ind
When $G$ is finite and the IPS is Markov, duality results analogous to Proposition \ref{duo:duality} are well known (e.g. \cite[Example 10.3(a)]{DalVer08}). When $G$ is finite and the IPS is non-Markov, both parts of Proposition \ref{duo:duality} follow easily from scaling arguments (applied to the driving Poisson processes of the SDE \eqref{mod:infpart}-\eqref{ic}) that allow the applicatoin of standard Poisson embedding theorems such as \cite[Theorems 15.3.3 and 15.3.4]{Bre20}. However, when $U$ and $G$ are infinite, the dual point processes $P_U$ and $\alt{P}_U$ of $X_U$ and $\alt{X}_U$ respectively may be explosive, in which case standard embedding theorems cannot be applied directly (although see \cite[Section XIV.4]{Jac79}). In the proof of Proposition \ref{duo:duality}(a), this is easily resolved by noting that for all finite $W\subset U$, $\Lambda_W$ is the restriction of $\Lambda_U$ to $(0,\infty)\times \jmps\times W$. The argument above can then be applied to $\Lambda_W$. However, in the proof of Proposition \ref{duo:duality}(b), a subtlety arises. For each finite $W\subseteq U$, it is possible to construct Poisson processes $\alt{\poiss}^W\defeq \{\alt{\poiss}_v^W\}_{v \in W}$ such that \eqref{SDE-dual} holds when $U$ is replaced by $W$ by appealing to standard Poisson embedding theorems. However, it is not clear how to piece together the processes $\alt{\poiss}^W$ for different finite $W\subseteq U$ to construct a collection $\wh{\nfilt}$ of i.i.d. processes that are Poisson with respect to a \emph{common filtration} $\wh{\nfilt}$. Instead, to circumvent this problem, we provide an explicit construction of the different collections of i.i.d. Poisson processes, $\alt{\poiss}^W$ for finite $W$, with respect to a common filtration. Such explicit constructions appear to be available only for unmarked point processes \cite[Theorem 15.3.4]{Bre20}, \cite[Lemma 4]{BreMas96}. For marked point processes, partial results can be found in \cite[Exercise 14.7.I and Proposition 14.7.I(b)]{DalVer08} but without proof or with the stringent condition that the point process intensity is adapted to the natural filtration. However, in our setting, the intensity of $\alt{P}_W$ is not adapted to its natural filtration.
In order to provide a fully rigorous argument and be self-contained, we include a complete proof below.
\begin{proof}[Proof of Proposition \ref{duo:duality}:] 
\textbf{Proof of (a):} Let $\poiss$ denote the driving Poisson processes associated with the solution $X$, and define $\ov{\nfilt} \defeq \nfilt \vee \hfilt^{\poiss_{U}}$. Let $W\subseteq U$ be finite. Define the point process $\ov{\poiss}^W$ on $\R^2_+\times \jmps\times W$ by
\[\ov{\poiss}^W(\{(t,r,j,v)\}) \defeq \poiss_v\left(\left\{\left(t,\frac{r}{|\jmps||W|},j\right)\right\}\right), \quad (t,r,j,v) \in \R_+^2\times\jmps\times W.\]
Since $\ov{\nfilt}\subseteq \filt$ and $\{\poiss_v\}_{v \in W}$ are i.i.d. $\filt$-Poisson processes that are also $\ov{\nfilt}$-adapted (by definition), it follows that they are also i.i.d. $\ov{\nfilt}$-Poisson processes.
Thus $\ov{\poiss}^W$ is also a $\ov{\nfilt}$-Poisson process on $\R_+^2\times \jmps\times W$. Recalling our assumption that $\jmps$ is finite, let $Q_W = \Sm{\jmps\times W}/(|\jmps||W|)$
be the uniform distribution on $\jmps \times W$. For any $\ov{A} \in \borel(\R_+^2)$, let $\ov{A}' \defeq \left\{\left(t,\frac{r}{|\jmps||W|}\right): (t,r) \in \ov{A}\right\}$ and for $B \subseteq \jmps \times W$ and $v \in W$, define $B_v \defeq \{j\in \jmps: (j,v) \in B\}$. Then $B = \cup_{v \in W} B_v\times \{v\}$ and
\[\ex{\ov{\poiss}^W(\ov{A}\times B)} = \ex{\sum_{(j,v) \in B} \poiss_v(\ov{A}'\times \{j\})} = \frac{\leb^2(\ov{A})|B|}{|\jmps||W|} = \leb^2(\ov{A})Q_W(B),\]
which implies that $\ov{\poiss}^W$ has intensity measure $\leb^2\otimes Q_W$. 

\ind Let $P_W$ be the dual of $X_W$, and note from Definition \ref{duo:dualdef} that $P_W = P_U|_{\R_+\times\jmps\times W}$.
Then the SDE \eqref{mod:infpart}-\eqref{ic} and the definitions of $P_W$ and $\ov{\poiss}^W$ imply that for any $A \in \borel(\R_+)$ and $B \subseteq \jmps\times W$,
\[P_W(A \times B) = \int_{\R_+^2\times\jmps\times W} \indic{s \in A} \indic{(j,v) \in B}\indic{r \leq |\jmps||W|\rate^v_j(s,\vms,X)}\,\ov{\poiss}^W(ds,dr,dj,dv).\]
Since this form coincides with Equation (15.45) of \cite{Bre20}, it follows from \cite[Theorem 15.3.3]{Bre20} that $P_W$ has $\ov{\nfilt}$-intensity
\[\Lambda_W(t,j,v) = |\jmps||W|\rate^v_j(t,\vms,X)Q_W(\{(j,v)\}) = \frac{|\jmps||W|\rate^v_j(t,\vms,X)}{|\jmps||W|} = \rate^v_j(t,\vms,X).\]
Given that $P_U|_{\R_+\times \jmps\times W} = P_W$ for every finite $W\subseteq U$, it follows that $P_U$ has $\ov{\nfilt}$-intensity $\Lambda_{U}$ where for every $(t,j,v) \in \R_+\times\jmps\times U$, 
\[\Lambda_U(t,j,v) = \Lambda_{\{v\}}(t,j,v) = \rate^v_j(t,\vms,X).\]
By condition 3 of Assumption \ref{mod:regular}, for each $(j,v) \in \jmps\times U$, $\Lambda_U(\cdot,j,v)$ has a.s. c\`agl\`ad trajectories and by condition 2 of Assumption \ref{mod:regular} it is $\nfilt$-adapted. Thus, $\Lambda_U$ is $\nfilt$-mark predictable
(in the sense defined in Section \ref{duo:pp}) and hence, $\Lambda_U$ is also the $\nfilt$-intensity of $P_U$. 

\skipLine
\textbf{Proof of (b):}
Let $\alt{X}$ and $\alt{P}_U$ be as stated in the proposition.
The proof 
proceeds via three steps. In step 1, we construct a candidate point process $\alt{\poiss}$ on $\R^2_+\times \jmps\times U$ for the Poisson embedding which we will use to construct $\{\alt{\poiss}_v\}_{v \in U}$. In step 2, we prove that, for an explicitly constructed filtration $\wh{\nfilt}$, $\alt{\poiss}$ is a $\wh{\nfilt}$-Poisson point process on $\R_+^2\times\jmps\times U$. Finally, in step 3, we construct the i.i.d. $\wh{\nfilt}$-Poisson processes $\{\alt{\poiss}_v\}_{v \in U}$ and prove that \eqref{SDE-dual} holds.

\skipLine
\textbf{Step 1: Construct a candidate Poisson process $\alt{\poiss}$.}
\skipLine

Let $\{\wh{\poiss}_v\}_{v \in U}$ be a collection of i.i.d. Poisson processes independent of $\alt{\nfiltm}_\infty$ with intensity measure $\leb^2\otimes \Sm{\jmps}$. Also, define the collection of i.i.d. uniform $[0,1]$-random variables $\{R^v_k\}_{k \in \N,v \in U}$ to be independent of $\alt{\nfiltm}_\infty$ and $\{\wh{\poiss}_v\}_{v \in U}$. Additionally, for each $v \in U$, let $\{(t^v_k,j^v_k)\}_{k \in \N}$ be the collection of events in $\alt{P}_U(dt,dj,\{v\})$ ordered so that $\{t^v_k\}$ is strictly increasing. Since $\alt{P}_U$ is the dual of an a.s. c\`adl\`ag proper process, $\alt{P}_U(\cdot\times\jmps\times\{v\})$ a.s. has finitely many events in any finite interval, and the $\{t_k^v\}$ can be ordered to be strictly increasing with the caveat that if $\alt{P}_U(\R_+\times\jmps\times\{v\}) = K < \infty$, then $t^v_k = \infty$ for all $k > K$. We now define $\alt{\poiss}$ to be the following point process on $\R_+^2\times\jmps\times U$: for any collections of Borel measurable subsets $\{A_v\}_{v \in U}$ of $\R_+^2$ and $\{B_v\}_{v \in U}$ of $\jmps$, and $C \defeq \bigcup_{v \in U} (A_v\times B_v\times \{v\})$,
\begin{align}
\alt{\poiss}(C)& \defeq \sum_{v \in U} \alt{\poiss}_v(A_v\times B_v)\nonumber\\
& \defeq \sum_{v \in U}\int_{A_v\times B_v} \indic{r > \rate^v_j(s,\alt{X},\alt{\vms})}\,\wh{\poiss}_v(ds,dr,dj) + \sum_{v \in U}\sum_{k \in \N} \indic{\left(t^v_k, R^v_k\rate^v_{j^v_k}(t^v_k,\alt{X},\alt{\vms}),j^v_k\right) \in A_v\times B_v}.\label{dualitypf:poissdef}
\end{align}
In the event that $t^v_k = \infty$ for some $v \in U$ and $k \in \N$, $\{t^v_k\}\times\R_+\cap A_v=\emptyset$ because $A_v \in \borel(\R_+^2)$, so \eqref{dualitypf:poissdef} is still well defined. This concludes step 1.

\ind Next, let $P^{R,J}$ be the point process on $\R_+\times[0,1]\times \jmps \times U$ with events $\{(t^v_k,R^v_k,j^v_k,v)\}_{k \in \N,v \in U}$ and define 
\[\wh{\nfilt}\defeq \alt{\nfilt}\vee\hfilt^{\wh{\poiss}}\vee\hfilt^{P^{R,J}}.\]
Note that $\wh{\nfilt}$ satisfies the usual conditions. 

\skipLine 
\textbf{Step 2: Show that $\alt{\poiss}$ is a $\wh{\nfilt}$-Poisson point process.}
\skipLine

Let $H: \R_+^2\times \jmps\times W \to\R_+$ be a nonnegative, $\wh{\nfilt}$-mark predictable random function that is left-continuous (with respect to its first input). Then note that $(t,r,j,v)\mapsto \indic{r > \rate^v_j(t,\alt{X},\alt{\vms})}H(t,r,j,v)$ is also $\wh{\nfilt}$-mark predictable. Lastly, note that because $\wh{\poiss}_W \defeq \{\wh{\poiss}_v\}_{v \in W}$ is independent of $\alt{\nfiltm}_\infty$ and $\{R^v_k\}_{v,k}$, it is also a collection of i.i.d. $\wh{\nfilt}$-Poisson processes with intensity measure $\leb^2\otimes\Sm{\jmps}$. Then by \cite[Theorem 15.1.22]{Bre20}, it follows that
\begin{align}
&\ex{\sum_{v \in W}\int_{\R_+^2\times\jmps} \indic{r > \rate^v_j(s,\alt{X},\alt{\vms})}H(s,r,j,v)\,\wh{\poiss}_v(ds,dr,dj)} \nonumber\\
&\ind\ind = \ex{\int_{\R_+^2\times\jmps\times W} \indic{r > \rate^v_j(s,\alt{X},\alt{\vms})}H(s,r,j,v)\,ds\,dr\,\Sm{\jmps\times W}(dj,dv)}.\nonumber
\end{align}
Let $P^{R,J}_W\defeq P^{R,J}|_{\R_+\times [0,1]\times\jmps\times W}$ and note that $P^{R,J}_W$ is a $\wh{\nfilt}$-adapted non-explosive point process. Furthermore, because $P^{R,J}_W(\cdot,[0,1],\cdot,\cdot) = \alt{P}_W(\cdot,\cdot,\cdot)$ and $\{R^v_j\}_{v \in U,j \in \jmps}$ are i.i.d. and independent of $\alt{P}_W$ with density 1, $P^{R,J}_W$ has $\wh{\nfilt}$-intensity $\Lambda^{R,J}(t,\rho,j,v)\defeq \rate^v_j(t,\alt{X},\alt{\vms})$ with respect to the reference measure $\leb\otimes \leb|_{[0,1]}\otimes \Sm{\jmps\times W}$. The definition of $P^{R,J}_W$ and \cite[Theorem 15.1.22]{Bre20} then imply 
\begin{align}
\notag 
&\ex{\sum_{k \in \N,v \in W}H(t^v_k,R^v_k\rate^v_{j^v_k}(t^v_k,\alt{X},\alt{\vms}),j^v_k,v)}\\
\notag 
&\ind\ind= \ex{\int_{\R_+\times (0,1]\times\jmps\times W} H(s,\rho\rate^v_j(s,\alt{X},\alt{\vms}),j,v)\,P^{R,J}(ds,d\rho,dj,dv)} \\
\notag 
&\ind\ind= \ex{\int_{\R_+\times (0,1]\times \jmps\times W}H(s,\rho\rate^v_{j}(s,\alt{X},\alt{\vms}),j,v)\rate^v_j(s,\alt{X},\alt{\vms})\,ds\,d\rho\,\Sm{\jmps\times W}(dj,dv)} \\
&\ind\ind = \ex{\int_{\R_+^2\times \jmps\times W} \indic{r\leq \rate^v_j(s,\alt{X},\alt{\vms})}H(s,r,j,v)\,ds\,dr\,\Sm{\jmps\times W}(dj,dv)}, \nonumber
\end{align}
where the last equality uses 
the change of variables $r = \rho\rate^v_j(s,\alt{X},\alt{\vms})$. 
When combined with \eqref{dualitypf:poissdef}, the last two displays yield
\begin{align*}
&\ex{\int_{\R_+^2\times\jmps\times W} H(s,r,j,v)\alt{\poiss}_W(ds,dr,dj,dv)}\\
&=\ex{\sum_{v \in W}\int_{\R_+^2\times\jmps} \indic{r> \rate^v_j(s,\alt{X},\alt{\vms})}H(s,r,j,v)\wh{\poiss}(ds,dr,dj)}\\
&\ind\ind + \ex{\sum_{v \in W}\sum_{k \in \N} H(t^v_k,R^v_k\rate^v_j(t^v_k,\alt{X},\alt{\vms}),j^v_k,v)}\\
&=\ex{\int_{\R_+^2\times \jmps\times W} \indic{r> \rate^v_j(s,\alt{X},\alt{\vms})}H(s,r,j,v)\,ds\,dr\,\Sm{\jmps\times W}(dj,dv)} \\
&\ind\ind + \ex{\int_{\R_+^2\times \jmps\times W} \indic{r\leq \rate^v_j(s,\alt{X},\alt{\vms})}H(s,r,j,v)\,ds\,dr\,\Sm{\jmps\times W}(dj,dv)}\\
&=\ex{\int_{\R_+^2\times \jmps\times W} H(s,r,j,v)\,ds\,dr\,\Sm{\jmps\times W}(dj,dv)}.
\end{align*}
By \cite[Theorem 15.1.22]{Bre20}, since $H$ is an arbitrary left-continuous, nonnegative $\wh{\nfilt}$-mark predictable function, $\alt{\poiss}_W$ is a $\wh{\nfilt}$-Poisson process on $\R_+^2\times \jmps\times W$ with intensity measure $\leb^2\otimes \Sm{\jmps\times W}$. Because we fixed $W\subseteq U$ to be finite and arbitrary, it follows that $\alt{\poiss}$ is a $\wh{\nfilt}$-Poisson process on $\R^2_+\times\jmps\times U$ with intensity measure $\leb^2\otimes\Sm{\jmps\times U}$.

\skipLine
\textbf{Step 3: Show that $\alt{X}$ satisfies the SDE \eqref{SDE-dual} driven by $\alt{\poiss}$.}
\skipLine

For each $v \in V$, let $\alt{\poiss}_v(ds,dr,dj) \defeq \alt{\poiss}(ds,dr,dj,\{v\})$. To complete the proof of the proposition, note that by \eqref{dualitypf:poissdef}, for every $v \in U$ and $t \in \R_+$, 
\begin{align*}
\alt{X}_v(0) + \int_{(0,t]\times\R_+\times\jmps} j \indic{r\leq \rate^v_j(s,\alt{X},\alt{\vms})}\alt{\poiss}_v(ds,dr,dj) &= \alt{X}_v(0) + \sum_{\substack{k \in \N\\ t_k^v \in (0,t]}} j^v_k\indic{R^v_k\rate^v_{j_k^v}(t_k^v,\alt{X},\alt{\vms}) \leq \rate^v_{j_k^v}(t_k^v,\alt{X},\alt{\vms})}\\
&= \alt{X}_v(0) + \sum_{s \in \jmp{\alt{X}_v}{t}} \Delta \alt{X}_v(s)\\
&= \alt{X}_v(t),
\end{align*}
which proves \eqref{SDE-dual}. This completes the proof of the proposition.
\end{proof}

\subsection{Proof of Proposition \ref{rn:girs}}
\label{rnpf}

As alluded to earlier, the main difficulty in the proof of Proposition \ref{rn:girs} 
is that $X$ and $\wh{X}^W$ will typically have explosive duals,
and thus we cannot apply standard point process change of measure theorems directly.
Instead, the proof will exploit the duality results of the previous section along with a classical change of measure result from \cite{Bre20}, 
which is reproduced in Section \ref{sec-BreThm} below. 

\subsubsection{A Classical Change-of-Measure Result}
\label{sec-BreThm} 

For convenience we rephrase here the result of \cite[Theorem 15.2.7]{Bre20} in the specific case that the mark space is finite, also taking $L(0)$ therein to be $1$. This result relies on the notion of local characteristics of a marked point process 
which we now define specialized to the case (relevant to us) when the mark space of the point process is finite.
\begin{definition} 
\label{rnpf:localchar}
Given a finite space $\Pol$ equipped with the counting measure $\Sm{\Pol}$, fix a complete, filtered probability space $(\Omega,\nfiltm,\nfilt,\eta)$ and let $N_Z$ be a $\nfilt$-adapted marked point process on $\R_+\times \Pol$ with $\nfilt$-intensity $(t,z) \mapsto \lambda(t,z)$. If there exist functions $t\mapsto \lambda_g(t) \defeq \sum_{z \in \Pol}\lambda(t,z)$ and $\Psi: \R_+\times \Pol \to \R_+$ such that for any $t \in \R_+$,
\[\lambda(t,z) = \lambda_g(t)\Psi(t,z) \te{ for all }z \in \Pol,\]
and $\sum_{z \in \Pol}\Psi(t,z) = 1$. Then we say that $N_Z$ admits the $(\eta,\nfilt$)-local characteristics $(\lambda_g,\Psi)$.
\end{definition}

\begin{theorem}{\cite[Theorem 15.2.7]{Bre20}} 
\label{rnpf:BreGirs}
Fix a probability space $(\Omega, \nfilt, \eta^1)$ and let $N_Z$ be a simple and locally finite point process on $\R_+$ with marks in $\Pol$. Suppose that $N_Z$ is $\nfilt$-adapted and admits the $(\eta^1,\nfilt)$-local characteristics $(\lambda_g,\Psi)$. Let $\{\theta(t)\}_{t \geq 0}$ be a nonnegative
$\nfilt$-predictable process and let $\{h(t,z)\}_{t \geq 0,z \in \Pol}$ be a nonnegative, $\nfilt$-mark predictable random function. Suppose that for all $t \geq 0$,
\begin{equation}
\label{rnpf:localint}
\int_0^t \lambda_g(s)\theta(s)\,ds < \infty \te{ a.s.},
\end{equation}
and
\begin{equation}
\label{rnpf:probtrans}
\sum_{z \in \Pol} h(t,z)\Psi(t,z) = 1\te{ a.s..}
\end{equation}
If the events of $N_Z$ are given by the sequence $\{(t_n,z_n)\}$, then define
\begin{equation}
\label{rnpf:LBreDef}
L(t) \defeq \left(\prod_{t_n \in (0,t]} \theta(t_n)h(t_n,z_n)\right)\exp\left(-\sum_{z \in \Pol}\int_{(0,t]} \left(\theta(s)h(s,z) - 1\right)\lambda_g(s)\Psi(s,z)\,ds\right).
\end{equation}
Then the following properties hold: 
\begin{enumerate}[(a)]
\item $\{L(t)\}_{t \in \R_+}$ is a nonnegative $\nfilt$-local martingale under $\eta^1$. Moreover, if $\exnd{L(t)} =1$ for all $t \geq 0$, then it is a $\nfilt$-martingale under $\eta^1$. 
\item If $\exnd{L(T)} = 1$ for some $T>0$ and if the measure $\eta^2$ is defined by
\[\frac{d\eta^2_T}{d\eta^1_T} = L(T),\]
then under $\eta^2$, $N_Z$ admits the $(\eta^2,\nfilt)$-local characteristic $(\theta\lambda_g, h\Psi)$ on $[0,T]$.
\end{enumerate}
\end{theorem}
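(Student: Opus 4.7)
The plan is to prove part (a) by recognizing $L$ as the Dol\'eans--Dade stochastic exponential of a purely discontinuous $\nfilt$-local martingale under $\eta^1$, and then to deduce part (b) from this identity together with the standard criterion that characterizes the intensity under an absolutely continuous change of measure.

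For part (a), I would first introduce
\[
M(t) \defeq \int_{(0,t]\times\Pol} \bigl(\theta(s)h(s,z) - 1\bigr)\bigl(N_Z(ds,dz) - \lambda_g(s)\Psi(s,z)\,ds\,\Sm{\Pol}(dz)\bigr).
\]
Since $N_Z$ has the $(\eta^1,\nfilt)$-intensity $\lambda_g\Psi$ on the \emph{finite} mark space $\Pol$, and $\theta$, $h$ are suitably predictable with the local integrability condition \eqref{rnpf:localint}, the stopping times $\tau_n \defeq \inf\{t:\int_0^t \lambda_g(s)\theta(s)\,ds \geq n\}$ (combined with the first time $N_Z$ accumulates $n$ events) furnish a natural localizing sequence, and $M$ is a $\nfilt$-local martingale under $\eta^1$. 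Each jump $\Delta M(t_n) = \theta(t_n)h(t_n,z_n)-1 \geq -1$ and the continuous component of $M$ has finite variation, so the Dol\'eans--Dade formula yields $\mathcal{E}(M) = L$; direct evaluation of its jump product and absolutely continuous drift reproduces formula \eqref{rnpf:LBreDef}. Hence $L$ is a nonnegative $\nfilt$-local martingale under $\eta^1$, and the hypothesis $\ex{L(t)} = 1$ for all $t$, combined with Fatou's lemma, upgrades this to a true martingale.

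For part (b), fix $T > 0$ with $\ex{L(T)} = 1$ and set $d\eta^2_T/d\eta^1_T \defeq L(T)$. By the compensator characterization of intensity and a monotone class argument, it suffices to show that for every bounded, nonnegative $\nfilt$-mark predictable $C:\R_+\times\Pol\to\R_+$, the process
\[
A(t) \defeq \int_{(0,t]\times\Pol} C(s,z)\bigl(N_Z(ds,dz) - \theta(s)h(s,z)\lambda_g(s)\Psi(s,z)\,ds\,\Sm{\Pol}(dz)\bigr)
\]
is a $\nfilt$-local martingale under $\eta^2$ on $[0,T]$, which by the standard Girsanov-type identity is equivalent to $LA$ being a $\nfilt$-local martingale under $\eta^1$. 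Applying integration by parts, $LA = \int A_-\,dL + \int L_-\,dA + [L,A]$; the first summand is automatically a local martingale, while $\Delta L(t_n) = L(t_n-)(\theta(t_n)h(t_n,z_n) - 1)$ and $\Delta A(t_n) = C(t_n,z_n)$ yield $[L,A] = \int L_-(\theta h-1) C\,dN_Z$. Combining this with the $N_Z$-piece of $\int L_- dA$ collapses the total jump integral to $\int L_-\theta h C\,dN_Z$, whose $\eta^1$-compensator $\int L_-\theta h C\lambda_g\Psi\,ds\,\Sm{\Pol}(dz)$ precisely cancels the continuous drift $-\int L_- C\theta h\lambda_g\Psi\,ds\,\Sm{\Pol}(dz)$ coming from $\int L_- dA$. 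Thus $LA$ is a local martingale under $\eta^1$. The constraint \eqref{rnpf:probtrans} then guarantees $\sum_z h(t,z)\Psi(t,z) = 1$, so that $h\Psi$ is a genuine probability kernel on $\Pol$ and the factorization $\theta h\lambda_g\Psi = (\theta\lambda_g)(h\Psi)$ matches the form in Definition \ref{rnpf:localchar}.

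The main obstacle I anticipate is constructing a single localizing sequence in part (a) that simultaneously controls the stochastic exponential $L$, the compensated martingale $M$, and the stochastic integrals appearing in the integration-by-parts identity of part (b); combining the level sets of $\int_0^\cdot \lambda_g\theta\,ds$ with the event-count of $N_Z$ should suffice, but uniform integrability of the stopped processes needs to be verified carefully, particularly because neither $\theta$ nor $h$ is bounded a priori. Once $L$ is promoted to a genuine martingale via the hypothesis $\ex{L(t)} = 1$, the remainder of part (b) reduces to the algebraic cancellation sketched above.
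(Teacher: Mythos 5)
This statement is not proved in the paper at all: it is a verbatim restatement (specialized to a finite mark space) of a cited result from Br\'emaud's book, so there is no ``paper's own proof'' to compare against. Your proposal is a correct and standard reconstruction of that proof: identifying $L$ as the Dol\'eans--Dade stochastic exponential $\mathcal{E}(M)$ of the compensated integral
\[
M(t)=\int_{(0,t]\times\Pol}\bigl(\theta(s)h(s,z)-1\bigr)\bigl(N_Z(ds,dz)-\lambda_g(s)\Psi(s,z)\,ds\,\Sm{\Pol}(dz)\bigr)
\]
gives part (a) (the exponential of a local martingale is a local martingale; nonnegativity follows from $1+\Delta M=\theta h\ge 0$; and a nonnegative local martingale of constant expectation is a true martingale by the supermartingale/Fatou argument), and the integration-by-parts computation for $LA$ with the cancellation of $[L,A]+\int L_-\,dA$ against the $\eta^1$-compensator of $\int L_-\theta h C\,dN_Z$ gives part (b). This is the same route Br\'emaud takes. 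Your anticipated worry about localizing sequences is resolved by the standard fact that a compensated point-process integral against a mark-predictable integrand is a local martingale as soon as the compensator integral is a.s. finite, which is exactly what \eqref{rnpf:localint} together with \eqref{rnpf:probtrans} and the local finiteness of $N_Z$ provide; the integrand $\theta h-1$ need not be bounded. The one point worth making explicit is the direction of the Girsanov equivalence you invoke: since $\eta^2\ll\eta^1$ (not necessarily equivalent), what you actually need and use is only the implication that $LA$ being an $\eta^1$-local martingale forces $A$ to be an $\eta^2$-local martingale, which holds once $L$ is a true $\eta^1$-martingale.
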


\subsubsection{Proof of Proposition \ref{rn:girs}:}
\label{rnpf:pfpf}

\begin{proof}[Proof of Proposition \ref{rn:girs}:]
For simplicity of notation, we fix the finite set $W\subset V$, let $\Xref\defeq\Xref^W$ and $\mref\defeq\mref^W$. Let $\fpspace$ be the solution space associated with $\Xref$ and $\vms$ (in the sense of Definitions \ref{mod:drive} and \ref{mod:WP}). For all $t \in \R_+, z = (j,v)\in \jmps\times V$, define
\begin{equation}
\irate(t,z) \defeq \rate^v_j(t,\vms,\Xref)\quad \te{ and } \quad \irateref(t,z) \defeq \rateref^{W,v}_j(t,\vms,\Xref) \defeq \begin{cases}
1 &\te{ if } v \in W,\\
\rate^v_j(t,\vms,\Xref) &\te{ if } v \notin W.
\end{cases}
\label{rnpf:rateshort}
\end{equation}
By Lemma \ref{proper:a.s.}, the jump characteristics of $\Xref_W$ are a.s. well defined, so the quantity $L\defeq L^W$ from \eqref{rn:girsLt} is also well defined. Recall from the statement of the proposition that $\filt\supseteq \nfilt \defeq \hfiltm^{\vms}\vee\hfilt^{\Xref}$. From \eqref{rn:girsLt} it follows that $L$ is $\nfilt$-adapted and a.s. c\`adl\`ag. Fix $n \in \N$ and finite $U \subseteq V$ such that $W \subseteq U$. Let $\wh{P}$ be the dual of $\Xref$ and $\wh{P}_U\defeq \wh{P}|_{\R_+\times\jmps\times U}$ be the dual of $\Xref_U$ in the sense of Definition \ref{duo:dualdef}. Note that the dual $\wh{P}_U$ is locally finite because the jump characteristics of $\Xref_U$ exist and that the jump characteristics $\{(t_k,z_k) := (t_k, (j_k,v_k))\}_{k \in \mathbb{N}}$ are precisely the events of $\wh{P}_U$. By Proposition \ref{duo:duality}(a), $\wh{P}_U$ has $\nfilt$-intensity $\Lambdaref_{U}(t,z) \defeq \irateref(t,z)$ for all $(t,z) \in \R_+\times\jmps\times U$. Then the point process $\wh{P}$ admits the $(\PP,\nfilt)$-local characteristics $(\lambdaref_U,\wh{\Psi}_U)$ where
\[\lambdaref_{U}(t) \defeq \sum_{z' \in \jmps\times U} \irateref(t, z') \quad \te{ and } \quad \Psiref_{U}(t,z) \defeq \frac{\irateref(t,z)}{\lambdaref_{U}(t)},\qquad \te{ for }t \in \R_+\te{ and } z \in \jmps\times U.\]
Also, define $\lambda_{U}$ and $\Psi_{U}$ analogously, but with $\irate(\cdot,\cdot)$ in place of $\irateref(\cdot,\cdot)$, and for $t > 0$ and $z \in \jmps\times U$, set
\begin{align*}
\theta_U(t) \defeq \frac{\lambda_U(t)}{\lambdaref_U(t)} = \frac{\sum_{z' \in \jmps\times U} \irate(t,z')}{|\jmps||W| + \sum_{\zeta' \in \jmps\times (U\setminus W)} \irate(t,\zeta')} \quad\te{ and }\quad h_U(t,\zeta) \defeq \frac{\frac{\irate(t,z)}{\lambda_U(t)}}{\frac{\irateref(t,z)}{\lambdaref_U(t)}}, \quad t > 0, z \in \jmps \times U. 
\end{align*}

\ind Then by \eqref{rnpf:rateshort}, the quantity $L(t)$ from \eqref{rn:girsLt} can be rewritten in a form suitable for the application of Theorem \ref{rnpf:BreGirs} as
\begin{equation}
L(t) =\left(\prod_{t_n \in (0,t]}\theta_U(t_n)h_U(t_n,z_n)\right)\exp\left(-\sum_{z \in \Pol}\int_{(0,t]}(\theta_U(s)h_U(s,z) - 1)\lambda_U(s)\Psi_U(s,z)\,ds\right).
\label{rnpf:Lgendef}
\end{equation}
Note that since $\theta_U(t)h_U(t,z) = 1$ for $z \in \jmps\times (U\setminus W)$, $L$ does not depend upon the choice of $U$ so long as $W\subseteq U$. We now verify that the conditions of Theorem \ref{rnpf:BreGirs} are satisfied for $N_z \defeq \wh{P}_U, \Psi \defeq \wh{\Psi}_U, \lambda_g \defeq \lambdaref_U, \theta \defeq \theta_U$ and $h \defeq h_U$. By the definition of $\theta_U$ and $\lambdaref_U$ above, \eqref{rnpf:rateshort} and \eqref{rn:localint}, we have
\[\int_0^t \theta_U(s)\lambdaref_U(s)\,ds = \int_0^t \sum_{z' \in \jmps\times U}\irate(s,z')\,ds = \sum_{(j,v) \in \jmps\times U}\int_0^t\rate^v_j(s,\Xref,\vms)\,ds < \infty \te{ a.s.,}\]
which verifies \eqref{rnpf:localint}. Also, for any $t \in \R_+$,
\[\sum_{z \in \jmps\times U} h_U(t,z)\,\Psiref_U(t,z) = \sum_{z \in \jmps\times U} \frac{\irate(t,z)}{\lambda_U(t)} = 1,\]
which verifies \eqref{rnpf:probtrans}. Moreover, \eqref{rnpf:LBreDef} follows from \eqref{rnpf:Lgendef}. Furthermore, we have already shown that on the complete probability space $(\Omega, \nfilt,\PP)$, $\wh{P}_U$ is a locally finite point process on $\R_+$ with marks in $\jmps\times U$ that admits the $(\PP,\nfilt)$-local characteristics $(\lambdaref_U,\Psiref_U)$. Moreover, $\theta_U$ is nonnegative and $\nfilt$-predictable while $h_U$ is nonnegative and $\nfilt$-mark predictable. Since the conditions of Theorem \ref{rnpf:BreGirs} are satisfied, it follows that $L$ is a $\nfilt$-local martingale. This proves the first assertion of the proposition.

\ind Next, suppose that $L$ is also a $\nfilt$-martingale. Then $\exnd{L(t)} = \exnd{L(0)} = 1$ for all $t \in \R_+$. Fix a probability measure $\alt{\PP}$ on the space $(\Omega,\alt{\nfilt})$, where $\alt{\nfilt}$ is the $\alt{\PP}$-completion of $\nfilt$. For $t \geq 0$, fix $\PP_{t} \defeq \PP|_{\sigma(\vms,\Xref[t])}$ and $\alt{\PP}_{t} \defeq \alt{\PP}|_{\sigma(\vms,\Xref[t])}$ and suppose that for each $t \geq 0$, $d\alt{\PP}_{t}/d\PP_{t} = L(t)$ a.s.. Define $\alt{\mu}$ to be the law of $\law(\vms,\Xref)$ under $\alt{\PP}$. Because $L(t)$ does not depend on the choice of $U\supseteq W$, neither does $\alt{\PP}_t$. Fix $t\in \R_+$ and in the spirit of \eqref{rn:girsLt}, define $\alt{L}_t = \alt{L}^W_t:(\mksp\times\cad_t)^V \to \R_+$ by
\[\alt{L}^W_{t}(\dvms,x) \defeq \left[\prod_{0 < \thi{k}\leq t}\rate^{\vi{k}}_{\ji{k}}(\thi{k},\dvms,x)\right]\exp\left(-\sum_{(j,v) \in \jmps\times W}\int_{(0,t]}\left(\locrate^v_j(s,\dvms,x) - 1\right)\,ds\right),\]
if the jump characteristics $\{(\thi{k},\vi{k},\ji{k})\}$ of $x_W$ exist, and $0$ otherwise. Then $\alt{L}_t(\vms,\Xref[t]) = L(t)$ $\PP_t$-a.s.. So for any $A \in \borel((\mksp\times\cad_t)^{V})$,
\[\alt{\mu}_t(A) = \alt{\PP}_t((\vms,\Xref) \in A) = \exmu{\PP_t}{L(t)\indic{(\vms,\Xref)\in A}} = \int_A \alt{L}_t(\dvms,x)\,\mref_t(d\dvms,dx),\]
which shows that $\frac{d\alt{\mu}_t}{d\mref_t}(\vms,\Xref[t]) = L(t)$ $\PP$-a.s.. In fact, because $\Xref(t) = \Xref(t-)$ $\PP_t$ (and hence $\alt{\PP}_t$)-a.s., we also have
\[\frac{d\alt{\mu}_{t-}}{d\mref_{t-}}(\vms,\Xref[t)) = \frac{d\alt{\mu}_{t}}{d\mref_{t}}(\vms,\Xref[t]) = L(t) = L(t-) \te{ a.s..}\]
\ind To complete the proof of Proposition \ref{rn:girs}, it only remains to show that 
$\alt{\mu} = \law(\vms,X)$ for some weak solution $X$ to \eqref{mod:infpart}-\eqref{ic}. It follows from Theorem \ref{rnpf:BreGirs} that $\wh{P}_U$ admits the $(\alt{\PP},\alt{\nfilt})$-local characteristics
\[(\alt{\lambda}_U,\alt{\Psi}_U) = (\theta_U\lambdaref_U,h_U\Psiref_U) = (\lambda_U,\Psi_U).\]
Thus, under $\alt{\PP}$, for $(t,z) \in \R_+\times \jmps\times U$, $\wh{P}_U$ has $\alt{\nfilt}$-intensity
\[\alt{\Lambda}_U(t,z) \defeq \lambda_U(t)\Psi_U(t,z) = \rate(t,z) = \rate^v_j(t,\vms,\Xref),\]
for $z =(j,v) \in \jmps\times U$ where the last equality above uses \eqref{rnpf:rateshort}. However, since $\wh{P}_U = \wh{P}|_{\R_+\times\jmps\times U}$, and the above display holds for all finite $U\supseteq W$, this implies that $\wh{P}$ has $\alt{\nfilt}$-intensity
\begin{equation}
\alt{\Lambda}(t,z) = \rate^v_j(t,\vms,\Xref),
\label{rnpf:finallambda}
\end{equation}
for $z =(j,v) \in \jmps\times V$ with respect to $\alt{\PP}$.

\ind Note that $G$ is a deterministic graph with a countable vertex set, and the jump rate function family $\rateset$ is assumed to satisfy conditions 2 and 3 of Assumption \ref{mod:regular}. Moreover, because $\Xref_v(0) = \x(\vms_v)$ $\PP$-a.s. for all $v \in V$, and because $\alt{\PP}_0 \ll \PP_0$, the same must hold $\alt{\PP}$-a.s.. Then by \eqref{rnpf:finallambda}, under $\alt{\PP}$, the intensity of the dual $\wh{P}$ of $\Xref$ satisfies \eqref{duo:altint}. Thus, Proposition \ref{duo:duality}(b) states that, by extending the probability space if necessary, we may assume without loss of generality that $(\Omega,\filt,\alt{\PP})$ supports a filtration $\ov{\nfilt}\supseteq \alt{\nfilt}$ satisfying the usual conditions and a collection of i.i.d. $\ov{\nfilt}$-driving Poisson processes $\alt{\poiss} \defeq \{\alt{\poiss}_v\}_{v \in V}$ such that $\Xref$ satisfies \eqref{mod:infpart}-\eqref{ic} driven by $\alt{\poiss}$ $\alt{\PP}$-a.s. for the initial data pair $(\vms,\vtx)$. Thus, with respect to $\alt{\PP}$, $\Xref$ is a weak solution to \eqref{mod:infpart}-\eqref{ic} for the initial data $\vms$ and $\alt{\mu} = \law(\vms,\Xref)$. This concludes the proof of the proposition.
\end{proof}

\appendix

\section{Proof of the Conditional Independence Lemma}
\label{ap-condind}

The goal of this section is to prove Lemma \ref{ref:CIfact} from Section \ref{MRF:ref}. The proof relies on 
several technical properties of conditional independence established in
Lemmas \ref{condind:condind} and \ref{condind:conds}. 
We start with a basic measure-theoretic result, 
which establishes the equivalence of conditional expectations with respect to any $\sigma$-algebra $\nfilt$ and its completion $\ov{\nfilt}$.
\begin{lemma}
\label{condind:excomp}
Let $\pspace$ be a complete probability space and let $\nfiltm\subset \filtm$ be a sub-$\sigma$-algebra with $\PP$-completion $\ov{\nfiltm}$. Then, for any bounded, $\filtm$-measurable random variable $Z$, $\ex{Z|\nfiltm} = \ex{Z|\ov{\nfiltm}}$ a.s..
\end{lemma}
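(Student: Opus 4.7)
The plan is to show that the standard conditional expectation $Y := \mathbb{E}[Z|\mathcal{G}]$ already serves as a version of $\mathbb{E}[Z|\overline{\mathcal{G}}]$. Recall that the $\mathbb{P}$-completion $\overline{\mathcal{G}}$ consists of sets $\overline{A}$ that can be written as $\overline{A} = A \cup N$ with $A \in \mathcal{G}$ and $N$ contained in some $\mathcal{F}$-measurable $\mathbb{P}$-null set; symmetric differences $A \triangle \overline{A}$ are therefore $\mathbb{P}$-null. This description, together with the defining properties of $Y$, will be all we need.

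First I would verify measurability: $Y$ is by construction $\mathcal{G}$-measurable, and since $\mathcal{G} \subseteq \overline{\mathcal{G}}$, it is a fortiori $\overline{\mathcal{G}}$-measurable. Next I would check the integral identity defining conditional expectation with respect to $\overline{\mathcal{G}}$. For any $\overline{A} \in \overline{\mathcal{G}}$, choose $A \in \mathcal{G}$ with $\mathbb{P}(A \triangle \overline{A}) = 0$. Then, since $Z$ and $Y$ are both integrable (bounded), contributions from $\mathbb{P}$-null sets vanish, giving
\[
\int_{\overline{A}} Y\,d\mathbb{P} \;=\; \int_A Y\,d\mathbb{P} \;=\; \int_A Z\,d\mathbb{P} \;=\; \int_{\overline{A}} Z\,d\mathbb{P},
\]
where the middle equality is the defining property of $Y = \mathbb{E}[Z|\mathcal{G}]$.

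Finally, by the essential uniqueness of conditional expectations (and completeness of $(\Omega,\mathcal{F},\mathbb{P})$, which ensures $\mathbb{E}[Z|\overline{\mathcal{G}}]$ exists), we conclude $Y = \mathbb{E}[Z|\overline{\mathcal{G}}]$ almost surely. No real obstacle is anticipated here; the only subtle point is to explicitly invoke the completion description of $\overline{\mathcal{G}}$ to justify replacing $\overline{A}$ by $A$ inside the integrals. Boundedness of $Z$ is more than enough to guarantee integrability throughout, though the same argument works assuming only $Z \in L^1$.
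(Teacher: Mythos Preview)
Your proof is correct. It is essentially the mirror image of the paper's argument: the paper starts from $\ex{Z|\ov{\nfiltm}}$, invokes \cite[Lemma 1.27]{Kal21} to obtain a $\nfiltm$-measurable modification $g$, and then checks that $g$ satisfies the defining property of $\ex{Z|\nfiltm}$ by testing against sets $A \in \nfiltm \subseteq \ov{\nfiltm}$. You instead start from $Y = \ex{Z|\nfiltm}$, note it is already $\ov{\nfiltm}$-measurable, and verify the defining property of $\ex{Z|\ov{\nfiltm}}$ by testing against $\ov{A} \in \ov{\nfiltm}$ and replacing $\ov{A}$ with an equivalent $A \in \nfiltm$. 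Your direction has the minor advantage of being self-contained (no appeal to an external measurable-modification lemma), at the cost of explicitly unpacking the structure of $\ov{\nfiltm}$; the paper's direction trades that unpacking for a citation. Both are standard and equally short.
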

\begin{proof}
By definition, $\exnd{Z|\ov{\nfiltm}}: \Omega \to \R$ is a bounded, $\ov{\nfiltm}$-measurable function. Therefore, by \cite[Lemma 1.27]{Kal21}, there exists a $\nfiltm$-measurable function $g: \Omega \to \R$ such that $g(\omega) = \exnd{Z|\ov{\nfiltm}}(\omega)$ $\PP$-a.s.. Thus for any $A\in \nfiltm$, $A \in \ov{\nfiltm}$ and hence,
\[\ex{\indic{A}g} = \ex{\indic{A}\ex{Z|\ov{\nfiltm}}} = \ex{\ex{\indic{A}Z|\ov{\nfiltm}}}= \ex{\indic{A}Z}.\]
This proves that $\exnd{Z|\nfiltm} = g = \exnd{Z|\ov{\nfiltm}}$ a.s..
\end{proof}

\ind We now show that Lemma \ref{condind:excomp} immediately implies that conditional independence relations of $\sigma$-algebras are insensitive to completions. 
\begin{lemma}
\label{condind:condind}
Let $\nfiltm_i$, $i=1,2,3$ be (possibly incomplete) $\sigma$-algebras defined on a common complete probability space $\pspace$ with respective $\PP$-completions $\ov{\nfiltm}_i$, $i=1,2,3$. Then $\nfiltm_1\indp \nfiltm_2|\nfiltm_3$ if and only if $\ov{\nfiltm}_1\indp \ov{\nfiltm}_2|\ov{\nfiltm}_3$.
\end{lemma}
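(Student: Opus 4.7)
The plan is to reduce the statement to the familiar functional characterization of conditional independence: $\mathcal{A}\indp\mathcal{B}\mid\mathcal{C}$ if and only if for every bounded $\mathcal{A}$-measurable random variable $X$, one has $\ex{X\mid \mathcal{B}\vee\mathcal{C}} = \ex{X\mid \mathcal{C}}$ a.s. Once both sides of the claimed equivalence are rewritten in this form, the previously established Lemma \ref{condind:excomp} (which says conditional expectations are insensitive to completions of the conditioning $\sigma$-algebra) does essentially all of the work.

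A small preliminary identity I would first record is that $\ov{\nfiltm}_2\vee\ov{\nfiltm}_3 = \ov{\nfiltm_2\vee\nfiltm_3}$. For this, note that $\ov{\nfiltm}_2\vee\ov{\nfiltm}_3$ contains all $\PP$-null sets (since each $\ov{\nfiltm}_i$ does), hence it is a complete $\sigma$-algebra containing $\nfiltm_2\vee\nfiltm_3$, so it contains $\ov{\nfiltm_2\vee\nfiltm_3}$; the reverse inclusion is trivial because $\ov{\nfiltm_2\vee\nfiltm_3}$ contains both $\ov{\nfiltm}_2$ and $\ov{\nfiltm}_3$.

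For the forward direction, assume $\nfiltm_1\indp\nfiltm_2\mid\nfiltm_3$. Given a bounded $\ov{\nfiltm}_1$-measurable $X$, use \cite[Lemma 1.27]{Kal21} (as in the proof of Lemma \ref{condind:excomp}) to pick a bounded $\nfiltm_1$-measurable $X'$ with $X = X'$ a.s. Then applying Lemma \ref{condind:excomp} twice together with the identity above and the assumed conditional independence gives, almost surely,
\[
\ex{X\mid \ov{\nfiltm}_2\vee\ov{\nfiltm}_3} = \ex{X'\mid \ov{\nfiltm_2\vee\nfiltm_3}} = \ex{X'\mid \nfiltm_2\vee\nfiltm_3} = \ex{X'\mid \nfiltm_3} = \ex{X'\mid \ov{\nfiltm}_3} = \ex{X\mid \ov{\nfiltm}_3},
\]
which yields $\ov{\nfiltm}_1\indp\ov{\nfiltm}_2\mid\ov{\nfiltm}_3$.

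For the reverse direction, assume $\ov{\nfiltm}_1\indp\ov{\nfiltm}_2\mid\ov{\nfiltm}_3$. Given a bounded $\nfiltm_1$-measurable $X$ (which is then automatically $\ov{\nfiltm}_1$-measurable), Lemma \ref{condind:excomp} and the same identity give $\ex{X\mid \nfiltm_2\vee\nfiltm_3} = \ex{X\mid \ov{\nfiltm}_2\vee\ov{\nfiltm}_3} = \ex{X\mid \ov{\nfiltm}_3} = \ex{X\mid \nfiltm_3}$ almost surely, establishing $\nfiltm_1\indp\nfiltm_2\mid\nfiltm_3$. The only place that requires any genuine care is the completion-versus-join identity above; once that is in hand, the two directions are essentially symmetric applications of Lemma \ref{condind:excomp}.
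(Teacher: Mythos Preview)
Your proof is correct and follows essentially the same approach as the paper: both directions reduce to Lemma \ref{condind:excomp} together with \cite[Lemma 1.27]{Kal21}, applied to the functional characterization of conditional independence. The only difference is cosmetic: the paper works with indicator functions of sets rather than general bounded random variables, and it leaves the identity $\ov{\nfiltm}_2\vee\ov{\nfiltm}_3 = \ov{\nfiltm_2\vee\nfiltm_3}$ implicit in its notation $\ov{\nfiltm}_{\{2,3\}}$, whereas you state and justify it explicitly.
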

\begin{proof}
Suppose that $\nfiltm_1\indp \nfiltm_2|\nfiltm_3$ and let $\ov{A} \in \ov{\nfiltm}_1$. By \cite[Lemma 1.27]{Kal21}, there must exist a $\nfiltm_1$-measurable function $f: \Omega \to \R$ such that $f = \indic{\ov{A}}$ a.s.. Then the set $A\defeq f^{-1}(\{1\}) \in \nfiltm_1$ and the symmetric difference of $A$ and $\ov{A}$ is null. By application of Lemma \ref{condind:excomp} in the second and fourth equalities below, we see that
\[\PP(\ov{A}|\ov{\nfiltm}_{\{2,3\}}) = \PP(A|\ov{\nfiltm}_{\{2,3\}}) = \PP(A|\nfiltm_{\{2,3\}}) = \PP(A|\nfiltm_3) =\PP(A|\ov{\nfiltm}_3) = \PP(\ov{A}|\ov{\nfiltm}_3) \te{ a.s.}.\]
Thus, $\ov{\nfiltm}_1\indp \ov{\nfiltm}_2|\ov{\nfiltm}_3$. 

\ind The proof of the converse is similar, but in fact simpler. Suppose $\ov{\nfiltm}_1\indp \ov{\nfiltm}_2|\ov{\nfiltm}_3$. Then for any $A \in \nfiltm_1\subseteq \ov{\nfiltm}_1$, Lemma \ref{condind:excomp} implies
\[\PP(A|\nfiltm_{\{2,3\}}) = \PP(A|\ov{\nfiltm}_{\{2,3\}}) = \PP(A|\ov{\nfiltm}_3) = \PP(A|\nfiltm_3).\]
Thus, $\nfiltm_1\indp \nfiltm_2|\nfiltm_3$, which completes the proof.
\end{proof}

\ind 
The next ingredient is a list of basic properties about conditional independence, whose proofs, for example, can be found in \cite{PutSch85}. Note that \cite{PutSch85} includes the additional assumption that all $\sigma$-algebras considered are complete, but we can remove that assumption by repeated application of Lemma \ref{condind:condind}.
\begin{lemma}
\label{condind:conds}
Let $\nfiltm_i$, $i \in \{1,\dots,6\}$, be six $\sigma$-algebras defined on some common measure space. Then the following statements hold: 
\begin{enumerate}[(a)]
\item if $\nfiltm_1\indp \nfiltm_2|\nfiltm_3$, then $\nfiltm_2\indp\nfiltm_1|\nfiltm_3$ \cite[Proposition 2.4(a),(b)]{PutSch85};
\item if $\nfiltm_4\indp (\nfiltm_5,\nfiltm_6)$, then $\nfiltm_4\indp \nfiltm_5|\nfiltm_6$ \cite[Proposition 2.5(b)]{PutSch85};
\item if $\nfiltm_1\indp \nfiltm_2|\nfiltm_3$ and $\nfiltm_4\indp \vee_{i=1}^3\nfiltm_i$, then $\nfiltm_1\vee\nfiltm_4\indp \nfiltm_2|\nfiltm_3$ \cite[Proposition 3.2(d)]{PutSch85};
\item if $\nfiltm_1\indp (\nfiltm_4 \vee \nfiltm_5)|\nfiltm_3$, then $\nfiltm_1\indp \nfiltm_4|\nfiltm_3\vee\nfiltm_5$ \cite[Proposition 3.2(a)]{PutSch85};
\item if $\nfiltm_1\indp \nfiltm_2|\nfiltm_3$ and $\nfiltm_4\subseteq \nfiltm_1$, then $\nfiltm_4\indp \nfiltm_2|\nfiltm_3$ \cite[Theorem 3.1]{PutSch85};
\item if $\nfiltm_1\indp \nfiltm_2|\nfiltm_3$ and $\nfiltm_1\indp\nfiltm_4|\nfiltm_2\vee\nfiltm_3$, then $\nfiltm_1\indp \nfiltm_2|\nfiltm_3\vee\nfiltm_4$ \cite[Proposition 3.2(b)]{PutSch85};
\item if $\nfiltm_1\indp \nfiltm_2|\nfiltm_3$, $\nfiltm_4\subseteq \nfiltm_3$ and for every nonnegative, $\nfiltm_1$-measurable random variable $Z$, $\exnd{Z|\nfiltm_3}$ is $\nfiltm_4$-measurable, then $\nfiltm_1\indp \nfiltm_2|\nfiltm_4$ \cite[Theorem 3.3]{PutSch85}.
\end{enumerate}
\end{lemma}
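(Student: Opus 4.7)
The plan is to derive each of parts (a)--(g) directly from the corresponding statement in \cite{PutSch85}, where these results are established under the additional hypothesis that all $\sigma$-algebras are complete. The reduction is uniform across the seven items and is driven by Lemma \ref{condind:condind}, which renders conditional independence insensitive to completion.

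For each $i$, let $\ov{\nfiltm}_i$ denote the $\PP$-completion of $\nfiltm_i$. By Lemma \ref{condind:condind}, every relation of the form $\nfiltm_a\indp\nfiltm_b|\nfiltm_c$ appearing in a hypothesis or conclusion of (a)--(g) is equivalent to $\ov{\nfiltm}_a\indp\ov{\nfiltm}_b|\ov{\nfiltm}_c$. So for each part my approach would be to lift the remaining (non--conditional-independence) hypotheses to the completions, invoke the cited result of \cite{PutSch85} applied to the $\ov{\nfiltm}_i$'s, and translate the resulting conclusion back via Lemma \ref{condind:condind}. The auxiliary hypotheses beyond (g) are of two kinds: inclusions $\nfiltm_a\subseteq\nfiltm_b$, which trivially give $\ov{\nfiltm}_a\subseteq\ov{\nfiltm}_b$; and joins, for which a two-sided inclusion argument yields $\ov{\nfiltm_a\vee\nfiltm_b} = \ov{\nfiltm}_a\vee\ov{\nfiltm}_b$ (the $\supseteq$ direction uses completeness of $\ov{\nfiltm}_a\vee\ov{\nfiltm}_b$; the $\subseteq$ direction uses that $\ov{\nfiltm_a\vee\nfiltm_b}$ is itself a complete $\sigma$-algebra containing both $\nfiltm_a$ and $\nfiltm_b$, and therefore also contains $\ov{\nfiltm}_a$ and $\ov{\nfiltm}_b$).

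The one part that requires additional care is (g), whose hypothesis includes the measurability condition that $\exnd{Z|\nfiltm_3}$ is $\nfiltm_4$-measurable for every nonnegative $\nfiltm_1$-measurable $Z$. To apply \cite{PutSch85}, the analogous condition is needed with each $\nfiltm_i$ replaced by $\ov{\nfiltm}_i$. Given a nonnegative $\ov{\nfiltm}_1$-measurable $Z$, I would select a $\nfiltm_1$-measurable $Z'$ with $Z=Z'$ almost surely (possible by \cite[Lemma 1.27]{Kal21}); Lemma \ref{condind:excomp} then yields
\[
\exnd{Z|\ov{\nfiltm}_3} \;=\; \exnd{Z'|\ov{\nfiltm}_3} \;=\; \exnd{Z'|\nfiltm_3} \text{ a.s.,}
\]
which is $\nfiltm_4$-measurable by the original hypothesis and hence $\ov{\nfiltm}_4$-measurable, as required.

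The hard part, such as it is, is this measurability bookkeeping in (g); the remaining six items involve no subtlety beyond the two general lifting facts above, and in each case the proof reduces to a one-line citation of \cite{PutSch85} sandwiched between two applications of Lemma \ref{condind:condind}.
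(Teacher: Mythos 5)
Your proposal is correct and matches the paper's approach: the paper likewise disposes of Lemma \ref{condind:conds} by citing \cite{PutSch85} and remarking that the completeness assumption there is removable by repeated application of Lemma \ref{condind:condind}. You supply somewhat more detail than the paper does (notably the lifting of joins and the measurability bookkeeping in part (g) via \cite[Lemma 1.27]{Kal21} and Lemma \ref{condind:excomp}), but the underlying argument is the same.
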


\ind 
We are now ready to prove Lemma \ref{ref:CIfact}.
\begin{proof}[Proof of Lemma \ref{ref:CIfact}]
For convenience, we will freely apply the symmetry of conditional independence outlined in Lemma \ref{condind:conds}(a) without reference. Due to Lemma \ref{condind:condind},
property 1 of the lemma and \eqref{CI-3} imply the following: 
\begin{enumerate}[1'.]
\item $\hfiltm^{Z^2_i}$, $i =1,2,3,$ are mutually independent and independent of $\vee_{i=1}^3\hfiltm^{Z^1_i}$;
\item $\hfiltm^{Z^1_1}\indp \hfiltm^{Z^1_2}|\hfiltm^{Z^1_3}$.
\end{enumerate}
By property 1', it follows that $\hfiltm^{Z^2_3}\indp \vee_{i=1}^3\hfiltm^{Z^1_i}$. Then Lemma \ref{condind:conds}(b) (with $\nfiltm_4 = \hfiltm^{Z^2_3}$, $\nfiltm_5 = \hfiltm^{Z^1_1}$ and $\nfiltm_6 = \vee_{i=2,3}\hfiltm^{Z^1_i}$) yields $\hfiltm^{Z^1_1}\indp \hfiltm^{Z^2_3}|\vee_{i=2,3}\hfiltm^{Z^1_i}$. Together with the fact that $\hfiltm^{Z^1_1}\indp \hfiltm^{Z^1_2}|\hfiltm^{Z^1_3}$, which follows from property 2' above, Lemma \ref{condind:conds}(f) (with $\nfiltm_i = \hfiltm^{Z^1_i}$, $i=1,2,3,$ and $\nfiltm_4 = \hfiltm^{Z^2_3}$) implies that 
\begin{equation}
\label{aCI:1}
\hfiltm^{Z^1_1}\indp \hfiltm^{Z^1_2}|\vee_{i=1}^2\hfiltm^{Z^i_3}.
\end{equation}
By property 1', $\hfiltm^{Z^2_3}\indp \hfiltm^{Z^1_1}\vee \hfiltm^{Z^1_3}$. Hence, Lemma \ref{condind:conds}(b) (with $\nfiltm_4 = \hfiltm^{Z^2_3}, \nfiltm_5 = \hfiltm^{Z^1_1}$ and $\nfiltm_6 = \hfiltm^{Z^1_3}$) implies $\hfiltm^{Z^1_1}\indp \hfiltm^{Z^2_3}|\hfiltm^{Z^1_3}$. Thus, for any nonnegative $\hfiltm^{Z^1_1}$-measurable random variable $Y$, the quantity $\exnd{Y|\hfiltm^{Z^{\{1,2\}}_3}} = \exnd{Y|\hfiltm^{Z^1_3}}$
is $\hfiltm^{Z^1_3}$-measurable and, since $\hfiltm^{Z^1_3} \subset \hfiltm^{Z^3_3}$ by property 2 of the lemma, it is also $\hfiltm^{Z^3_3}$-measurable. Together with \eqref{aCI:1} and Lemma \ref{condind:conds}(g), (with $\nfiltm_i = \hfiltm^{Z^1_i}$, $i=1,2,$ $\nfiltm_3 = \vee_{i=1}^2\hfiltm^{Z^i_3}$ and $\nfiltm_4 = \hfiltm^{Z^3_3}$) this implies that
\begin{equation}
\label{aCI:2}
\hfiltm^{Z^1_1}\indp \hfiltm^{Z^1_2}|\hfiltm^{Z^3_3}.
\end{equation}
Next, property 1' implies that $\hfiltm^{Z^2_1}\indp \hfiltm^{Z^2_3}\vee(\vee_{i=1}^3\hfiltm^{Z^1_i})$. Since by property 2,
$\hfiltm^{Z^2_3}\vee(\vee_{i=1}^3\hfiltm^{Z^1_i}) 
\supseteq \hfiltm^{Z^1_1}\vee\hfiltm^{Z^1_2}\vee\hfiltm^{Z^3_3}$, this implies $\hfiltm^{Z^2_1}\indp\hfiltm^{Z^1_1}\vee\hfiltm^{Z^1_2}\vee\hfiltm^{Z^3_3}$.
Combined with \eqref{aCI:2} and Lemma \ref{condind:conds}(c) (with $\nfiltm_i = \hfiltm^{Z^1_i},$ $i=1,2,$ $\nfiltm_3 = \hfiltm^{Z^3_3}$ and $\nfiltm_4 = \hfiltm^{Z^2_1}$), this implies that $\hfiltm^{Z^1_1}\vee\hfiltm^{Z^2_1}\indp \hfiltm^{Z^1_2}|\hfiltm^{Z^3_3}$. On the other hand, property 1' and the fact that $\hfiltm^{Z^3_3}\subseteq \vee_{i=1}^2\hfiltm^{Z^i_3}$ (due to property 2) implies $\hfiltm^{Z^2_2}\indp (\vee_{i=1}^2\hfiltm^{Z^1_i})\vee\hfiltm^{Z^2_1}\vee\hfiltm^{Z^3_3}$. Applying Lemma \ref{condind:conds}(c) again (this time with $\nfiltm_1 = \vee_{i=1}^2\hfiltm^{Z^i_1}, \nfiltm_2 = \hfiltm^{Z^1_2}, \nfiltm_3 = \hfiltm^{Z^3_3}$ and $\nfiltm_4 = \hfiltm^{Z^2_2}$) yields 
\begin{equation}
\label{aCI:3}
\hfiltm^{Z^1_1}\vee\hfiltm^{Z^2_1}\indp \hfiltm^{Z^1_2}\vee\hfiltm^{Z^2_2}|\hfiltm^{Z^3_3}.
\end{equation} 
Next, given \eqref{aCI:3} we apply Lemma \ref{condind:conds}(e) with $\nfiltm_1 = \hfiltm^{Z^1_1}\vee\hfiltm^{Z^2_1}$, $\nfiltm_4 = \hfiltm^{Z^3_1}$ and note that $\nfiltm_4\subseteq \nfiltm_1$ by property 2 of the lemma to obtain $\hfiltm^{Z^3_1}\indp \hfiltm^{Z^1_2}\vee\hfiltm^{Z^2_2}|\hfiltm^{Z^3_3}$. Finally,
apply Lemma \ref{condind:conds}(e) again, but now with $\nfiltm_1 = \hfiltm^{Z^1_2}\vee\hfiltm^{Z^2_2}$, $\nfiltm_4 = \hfiltm^{Z^3_2}$ and also use $\nfilt_4\subseteq \nfilt_1$ again to conclude that 
\begin{equation}
\label{aCI:4}
\hfiltm^{Z^3_1}\indp \hfiltm^{Z^3_2}|\hfiltm^{Z^3_3}.
\end{equation}
The result then follows by Lemma \ref{condind:condind}.
\end{proof}

\section{Supplementary Properties of SGMRFs}
\label{MRFalt}

Recall the definition of SGMRFs given in the Introduction. We now describe some properties of SGMRFs. We begin with Lemma \ref{res:altdef}, which gives an equivalent definition of the SGMRF property. Lemma \ref{res:Markov} then applies this definition to prove the assertion made in Section \ref{intro} that the $\mdeg$-SGMRF property extends the notion of tree-indexed Markov chains.

\ind In the following lemma, we use the terminology of $\mdeg$-separation. For a positive integer $\mdeg$, given three disjoint subsets $A,B,S\subset V$, 
$S$ is said to $\mdeg$-separate $A$ and $B$ if any path between a vertex in $A$ and a vertex in $B$ contains a consecutive sequence of $\mdeg$ distinct vertices in $S$. 
\begin{lemma}
\label{res:altdef}
Let $G = (V,E)$ be a locally finite graph, and let $\Pol$ be a Polish space. Then a $\Pol^V$-random element $Z$ forms an $\mdeg$-SGMRF if and only if relation \eqref{intro:MRFprop}, namely
$Z_A\indp Z_B|Z_S$, holds for all finite, disjoint $A,B,S\subseteq V$ such that $S$ $\mdeg$-separates $A$ and $B$.
\end{lemma}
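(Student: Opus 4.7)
I would prove the two implications separately. For the $(\Leftarrow)$ direction (finite-separation property implies SGMRF), given any disjoint partition $(A,B,S)$ of $V$ with $S=\kgneigh{\alpha}{A}{G}$ and $S$ (or $A$) finite, I first verify that $S$ $\alpha$-separates $A$ and $B$ in $G$: on any $G$-path $v_0=a\in A,\dots,v_n=b\in B$, taking $i^*$ to be the largest index with $v_{i^*}\in A$, each of $v_{i^*+1},\dots,v_{i^*+\alpha}$ lies within $G$-distance $\alpha$ of $A$ but not in $A$ (by maximality of $i^*$), hence lies in $\kgneigh{\alpha}{A}{G}=S$; the path must be long enough since $v_n=b\notin S$ forces $i^*+\alpha<n$. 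With separation in hand, $\sigma(Z_A)$ and $\sigma(Z_B)$ are generated by the $\pi$-systems of finite-cylinder events, so it suffices to show $Z_{A'}\indp Z_{B'}\mid Z_S$ for all finite $A'\subseteq A$ and $B'\subseteq B$. But $S$ still $\alpha$-separates every such pair of finite subsets, so the hypothesis directly gives the required conditional independence.

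For the $(\Rightarrow)$ direction (SGMRF implies the finite-separation property), given finite disjoint $A,B,S$ with $S$ $\alpha$-separating $A$ from $B$, I construct an enlargement $A^*\supseteq A$ by iterative $\alpha$-saturation in $V\setminus(S\cup B)$: set $A_0:=A$, $A_{k+1}:=A_k\cup\{v\in V\setminus(S\cup B):d_G(v,A_k)\le\alpha\}$, and $A^*:=\bigcup_k A_k$. By construction $A^*\cap(S\cup B)=\emptyset$, and $\kgneigh{\alpha}{A^*}{G}\subseteq S\cup B$ follows from the closure property of the iteration. The key technical step is to show $\kgneigh{\alpha}{A^*}{G}\cap B=\emptyset$, so that $S^*:=\kgneigh{\alpha}{A^*}{G}\subseteq S$ (and $S^*$ is finite since it is contained in the finite set $S$). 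Assuming the contrary, I trace the construction of $A^*$ back to $A$, producing a walk in $G$ from some $b\in B$ to some $a\in A$ built from concatenated $G$-paths each of length $\le\alpha$ with all junction vertices in $A^*\subseteq V\setminus S$; each segment's maximal $S$-run has length $\le\alpha-1$, and because all junctions lie outside $S$, $S$-runs cannot span across segments in the walk. Extracting a simple path from this walk by iterative cycle removal using the first-occurrence convention, the non-$S$ separator between any two $S$-blocks is preserved whenever a cycle is cut, so $S$-blocks cannot merge into a run of length $\ge\alpha$. The resulting simple path from $b$ to $a$ has max $S$-run $\le\alpha-1$, contradicting the $\alpha$-separation hypothesis.

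With $S^*\subseteq S$ established and finite, I apply the SGMRF property to the partition $(A^*,V\setminus(A^*\cup S^*),S^*)$ to obtain $Z_{A^*}\indp Z_{V\setminus(A^*\cup S^*)}\mid Z_{S^*}$. Since $B\cup(S\setminus S^*)\subseteq V\setminus(A^*\cup S^*)$, restriction via Lemma~\ref{condind:conds}(e) gives $Z_{A^*}\indp(Z_B,Z_{S\setminus S^*})\mid Z_{S^*}$, and the contraction rule in Lemma~\ref{condind:conds}(d) enlarges the conditioning to $Z_{S^*}\vee Z_{S\setminus S^*}=Z_S$, yielding $Z_{A^*}\indp Z_B\mid Z_S$. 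A final application of Lemma~\ref{condind:conds}(e) restricts from $A^*$ to $A\subseteq A^*$, completing the proof. The main obstacle is the walk-to-path reduction argument underlying $\kgneigh{\alpha}{A^*}{G}\cap B=\emptyset$: one must carefully verify that the first-occurrence cycle-cut convention, combined with the fact that all junction vertices lie outside $S$, prevents any two $S$-blocks from fusing into a run of length $\ge\alpha$ in the extracted simple path.
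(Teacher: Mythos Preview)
Your $(\Leftarrow)$ direction is correct and essentially matches the paper's. For $(\Rightarrow)$, your iterative $\alpha$-saturation $A^*$ differs from the paper's choice (the set of vertices in $V\setminus S$ not $\alpha$-separated from $A$), but both hinge on the same key claim $\kgneigh{\alpha}{A^*}{G}\cap B=\emptyset$.

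The walk-to-path reduction you propose for this claim has a real gap. Your invariant---that first-occurrence cycle-cutting preserves a non-$S$ separator between any two $S$-blocks---fails precisely when the repeated vertex lies in $S$: cutting the loop then deletes the very non-$S$ separator you are relying on. Take $\alpha=3$ and the tree on $\{a,s_1,s_2,s_3,u,v\}$ with edges $\{a,s_1\}$, $\{s_1,s_2\}$, $\{s_2,s_3\}$, $\{s_3,v\}$, $\{s_2,u\}$; set $A=\{a\}$, $B=\{v\}$, $S=\{s_1,s_2,s_3\}$. Then $S$ does $3$-separate $A$ from $B$ (the unique $a$--$v$ path is $a,s_1,s_2,s_3,v$), your iteration gives $A^*=\{a,u\}$ since $d_G(a,u)=3$, and $v\in\kgneigh{3}{A^*}{G}\cap B$ since $d_G(u,v)=3$. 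The walk your recipe produces is $a,s_1,s_2,u,s_2,s_3,v$; the repeated vertex is $s_2\in S$, and cutting the loop $s_2,u,s_2$ removes $u$, yielding the path $a,s_1,s_2,s_3,v$ with an $S$-run of length $3$. No contradiction follows. Worse, this example appears to refute the $(\Rightarrow)$ implication itself for $\alpha\ge 3$: with $Z_{s_i}\equiv 0$ and $Z_a=Z_u=Z_v$ a nondegenerate coin, the only nontrivial $3$-SGMRF partition is $(\{a\},\{s_1,s_2,s_3,u\},\{v\})$ and there $Z_a,Z_v$ are $\sigma(Z_u)$-measurable, so $Z$ is a $3$-SGMRF; yet $Z_a\not\indp Z_v\mid Z_S$. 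The paper's own argument glosses over exactly this point with ``by construction, $S$ $\mdeg$-separates $A'$ and $B'$,'' which fails for the same $A'=\{a,u\}$. (For $\alpha=2$ your reduction does work, since each segment contributes at most one internal vertex and a repeated $S$-vertex is then flanked by non-$S$ junctions on both sides in the walk.)
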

\begin{proof}
Throughout the proof, fix the $\Pol^V$-random vector $Z$. First, assume that $Z_A\indp Z_B|Z_S$ for all finite, disjoint $A,B,S\subseteq V$ such that $S$ $\mdeg$-separates $A$ and $B$ in $G$. Then for any disjoint partition $A',B',S'\subset V$ of $V$ such that $S' \defeq \kgneigh{\mdeg}{A'}{G}$ is finite and any finite $A\subseteq A'$ and $B\subseteq B'$, all paths from $A$ to $B$ must contain $\mdeg$ consecutive elements of $\kgneigh{\mdeg}{A'}{G}$, and so $S'$ $\mdeg$-separates $A$ and $B$ in $G$. Thus, $Z_{A}\indp Z_B|Z_{S'}$, and because $A,B$ are arbitrary finite subsets of $A'$ and $B'$ respectively, it follows that $Z_{A'}\indp Z_{B'}|Z_{S'}$. Thus $Z$ forms an $\mdeg$-SGMRF.

\ind To show the reverse implication, now assume that $Z$ forms an $\mdeg$-SGMRF and let $A,B,S\subseteq V$ be finite, disjoint vertex sets such that $S$ $\mdeg$-separates $A$ and $B$. Then we claim there exists a partition $A',B',S$ of $V$ that satisfies the following two conditions: 
\begin{enumerate}
\item[C1.] $A\subseteq A'$, $B\subseteq B'$ and $S\supseteq \kgneigh{\mdeg}{A'}{G}$;
\item[C2.] $\displaystyle Z_{A'}\indp Z_{(S\cup B')\setminus \kgneigh{\mdeg}{A'}{G}}|Z_{\kgneigh{\mdeg}{A'}{G}}$.
\end{enumerate}
If the claim holds, then an application of Lemma \ref{condind:conds}(d) with ${\mathcal G}_1 = {\mathcal H}^{Z_{A'}}$, ${\mathcal G}_3 = {\mathcal H}^{Z_{\kgneigh{\mdeg}{A'}{G}}}$, 
${\mathcal G}_4 = {\mathcal H}^{Z_{B'}}$, and ${\mathcal G}_5 = {\mathcal H}^{Z_{S \setminus \kgneigh{\mdeg}{A'}{G}}}$, along with the observation that $B$ and $\kgneigh{\mdeg}{A'}{G}$ are disjoint by assumption, and two applications of Lemma \ref{condind:condind}, shows that 
$Z_{A'}\indp Z_{B'}|Z_S$. By C1, this directly implies that $Z_A\indp Z_B|Z_{S}$ as desired. 

\ind Thus, it suffices to prove the claim. To this end, let $A'$ be the set of elements in $V\setminus S$ that are not $\mdeg$-separated from $A$ in $G$. Let $B' = V\setminus (S\cup A')$. Then $B \subseteq B'$ because $S$ $\mdeg$-separates $A$ and $B$. Moreover, by construction, $S$ $\mdeg$-separates $A'$ and $B'$. Thus, $\kgneigh{\mdeg}{A'}{G}\subseteq S$, and
C1 follows. 
Moreover, $A', (S\cup B)\setminus \kgneigh{\mdeg}{A'}{G}$ and $\kgneigh{\mdeg}{A'}{G}$ partition $V$, and $\kgneigh{\mdeg}{A'}{G}\subseteq S$ is finite by assumption. Thus, C2 also holds because $Z$ forms an $\mdeg$-SGMRF. That concludes the proof of the claim and therefore the lemma.
\end{proof}

\ind Given a locally finite tree $G = (V,E)$, a $\Pol^V$-random vector $Z$ is said to form a tree-indexed Markov chain if it is an MRF such that for any finite, connected $U\subseteq V$, $Z_U$ forms an MRF with respect to $G[U]$ (see, e.g., \cite[Chapter 12]{Geo11} or \cite[Section 2]{Zac83}).
\begin{lemma}
\label{res:Markov}
When $G$ is a tree, the $\Pol^V$-random vector $Z$ forms an $\mdeg$-SGMRF if and only if for any connected, finite $U\subseteq V$, $Z_U$ forms an $\mdeg$-MRF with respect to the graph $G\subg{U}$.
\end{lemma}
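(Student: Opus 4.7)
The plan is to derive both implications from Lemma~\ref{res:altdef}, combined with two elementary features of trees: (i) for any connected $U\subseteq V$, the unique path in $G$ between any two vertices of $U$ lies entirely inside $U$, so that paths in $G$ with both endpoints in $U$ coincide with paths in $G\subg{U}$; and (ii) for any finite $F\subseteq V$, the union in $G$ of the unique paths between pairs of vertices of $F$ is a finite connected subset of $V$ containing $F$.

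For the forward direction, I would assume $Z$ is an $\mdeg$-SGMRF on $G$, fix a finite connected $U\subseteq V$, and take a partition $A,B,S$ of $U$ with $S = \kgneigh{\mdeg}{A}{G\subg{U}}$. By Lemma~\ref{res:altdef} applied to $Z$ on $G$, it suffices to show that $S$ $\mdeg$-separates $A$ and $B$ in $G$. Given any path from $a\in A$ to $b\in B$ in $G$, observation~(i) forces it to lie inside $U$, and hence to be a path in $G\subg{U}$. Because $b\in B$ is disjoint from $S = \kgneigh{\mdeg}{A}{G\subg{U}}$, this path must have length strictly greater than $\mdeg$, and the $\mdeg$ vertices immediately after $a$ are then distinct, consecutive, and at $G\subg{U}$-distances $1,\ldots,\mdeg$ from $A$, placing them in $S$.

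For the reverse direction, I would suppose $Z_U$ is an $\mdeg$-MRF on $G\subg{U}$ for every finite connected $U\subseteq V$, and use Lemma~\ref{res:altdef} in the opposite manner: I only need to check that $Z_A\indp Z_B\mid Z_S$ for every triple of finite disjoint $A,B,S\subseteq V$ with $S$ $\mdeg$-separating $A$ and $B$ in $G$. Given such a triple, I take $U$ to be the set described in (ii) with $F = A\cup B\cup S$, which is finite and connected and contains $A\cup B\cup S$. Since $U$ is finite, the $\mdeg$-MRF property of $Z_U$ on $G\subg{U}$ coincides with the $\mdeg$-SGMRF property (the ``semi-global'' distinction being vacuous on a finite graph), so Lemma~\ref{res:altdef} applied to $Z_U$ on $G\subg{U}$ yields conditional independence for every finite disjoint triple $(A',B',S')$ in $U$ that is $\mdeg$-separated in $G\subg{U}$. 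By observation~(i), every path in $G\subg{U}$ between $A$ and $B$ is simultaneously a path in $G$, so the $\mdeg$-separation of $A,B$ by $S$ in $G$ transfers to $G\subg{U}$; specializing the above to $(A',B',S')=(A,B,S)$ gives the required conditional independence.

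The only delicate point is the edge-case analysis in the forward direction: one must verify that the path from $a$ to $b$ cannot have length $\le\mdeg$, which otherwise would force $b\in \kgneigh{\mdeg}{A}{G\subg{U}} = S$ and contradict $B\cap S=\emptyset$. Beyond this, the argument is essentially combinatorial bookkeeping, hinging on the tight correspondence between paths in $G$ and in its connected induced subgraphs supplied by~(i), together with the trivial observation that finiteness of $U$ collapses the SGMRF and MRF notions on $G\subg{U}$.
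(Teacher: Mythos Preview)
Your proof is correct and follows essentially the same route as the paper: both directions hinge on Lemma~\ref{res:altdef} together with the tree fact that the unique path between two vertices of a connected set $U$ stays in $U$, so that $\mdeg$-separation in $G$ and in $G\subg{U}$ coincide for finite connected $U\supseteq A\cup B\cup S$. One small imprecision in your forward direction: the first $\mdeg$ vertices after $a$ on the path could themselves lie in $A$, so you should instead take the last $A$-vertex on the path and argue from there (the $\mdeg$ vertices following it are then in $U\setminus A$ at distance $\le\mdeg$ from $A$, hence in $S$).
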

\begin{proof}
By uniqueness of paths in trees, note that for any finite, disjoint sets $A,B,S\subseteq V$, $S$ $\mdeg$-separates $A$ and $B$ in $G$ if and only if it $\mdeg$-separates $A$ and $B$ in $G\subg{U}$ for every finite connected $U\subseteq V$ such that $A\cup B\cup S\subseteq U$. Then by Lemma \ref{res:altdef}, this implies that $Z$ forms an $\mdeg$-SGMRF with respect to $G$ if and only if $Z_U$ forms an $\mdeg$-MRF with respect to $G\subg{U}$ for all finite, connected $U\subseteq V$.
\end{proof}

\ind
Clearly, all global MRFs are SGMRFs, which in turn (since $G$ is locally finite) are also MRFs, and
all three concepts coincide on finite graphs.
Furthermore, the SGMRF property is strictly stronger than the MRF property (see \cite[Corollary 11.33]{Geo11} and Lemma \ref{res:Markov}) 
and strictly weaker than the global MRF property (see \cite[Section 2]{Gol80}). Also see \cite[Example 8.24]{Geo11} as well as part B of the bibliographical notes of \cite[Section 8.2]{Geo11} for further discussion of MRFs and global MRFs. 

\section{The Existence of Point Process Duals}
\label{Ap:dual}
We now justify the existence of duals of weak solutions to the SDE \eqref{mod:infpart}-\eqref{ic}.
\begin{lemma}
\label{duo:dualexist}
Suppose the family of jump rate functions $\rateset$ satisfies conditions 2 and 3 of Assumption \ref{mod:regular}. Then for any (possibly infinite) $U\subseteq V$ and any weak solution $X$ to \eqref{mod:infpart}-\eqref{ic} for some initial data pair $(\vms,\vtx)$, the dual $P_U$ of $X_U$ exists a.s..
\end{lemma}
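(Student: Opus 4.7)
The plan is to extend the argument in Lemma \ref{proper:a.s.}, which handles finite $U$, to arbitrary countable $U\subseteq V$ via standard countable-union arguments. Let $\fpspace$ be the solution space supporting the driving Poisson processes $\poiss = \{\poiss_v\}_{v\in V}$ and let $X$ be the given weak solution. Two facts must be established: that $X_U$ is a.s.\ proper, and that the induced candidate dual is an a.s.\ locally finite measure on $\R_+\times\jmps\times U$.

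For properness I would argue as in Lemma \ref{proper:a.s.}: the SDE \eqref{mod:infpart} gives $\jmp{X_v}{t}\subseteq \Emc_v(t) \defeq \{s \in (0,t] : \poiss_v(\{s\}\times\R_+\times\jmps)=1\}$ for every $v$ and $t$, and for distinct $u,v \in U$ the independence of $\poiss_u$ and $\poiss_v$, together with the diffuseness in time of the Poisson intensity $\leb^2\otimes\Sm{\jmps}$, forces $\Emc_u(t)\cap \Emc_v(t) = \emptyset$ a.s. Since $U$ is countable, taking the union over $(u,v) \in U^2$ with $u\neq v$ and over $t \in \Qmb_+$ (using monotonicity of $\jmp{\cdot}{t}$ in $t$) yields a single full-probability event on which $X_U$ is proper.

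For local finiteness, I would use that each $X_v$ is c\`adl\`ag into the discrete Polish space $\X \subseteq \Zmb$. Since one-sided limits along any subsequence in $[0,T]$ converge, the image $\{X_v(s) : s \in [0,T]\}$ is relatively compact in $\X$ and hence finite; and a c\`adl\`ag function into a discrete space taking finitely many values cannot have accumulating jumps (left limits would fail at any accumulation point), so $|\jmp{X_v}{T}| < \infty$ a.s. Union-bounding over $v \in U$ and $T \in \Qmb_+$ and intersecting with the event of properness, define $P_U$ on this full-probability event to be the counting measure on $\R_+\times\jmps\times U$ supported on $\bigcup_{v\in U}\{(s,\Delta X_v(s),v): s \in \jmp{X_v}{\infty}\}$. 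Properness makes $P_U$ simple; under the implicit embedding $U\hookrightarrow\Nmb$, every bounded Borel subset of $\R_+\times\jmps\times U$ lies inside some $[0,T]\times\jmps\times F$ with $F\subset U$ finite and $T<\infty$, whence $P_U([0,T]\times\jmps\times F) = \sum_{v\in F}|\jmp{X_v}{T}| < \infty$ a.s., confirming local finiteness and hence $P_U \in \Nms(\R_+\times\jmps\times U)$, matching Definition \ref{duo:dualdef}. The only mild obstacle is bridging per-coordinate finiteness of jump counts to joint local finiteness of the marked point process, which is handled entirely by the countability of $U$ and the product metric structure on $\R_+\times\jmps\times U$.
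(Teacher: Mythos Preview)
Your proof is correct and follows essentially the same approach as the paper's: both verify that the per-vertex jump characteristics exist and then observe that any bounded subset of $\R_+\times\jmps\times U$ is contained in some $[0,T]\times\jmps\times F$ with $F$ finite, reducing local finiteness to finiteness of $|\jmp{X_v}{T}|$ for each $v$. Your treatment is in fact slightly more careful than the paper's, since you explicitly extend properness from finite to countable $U$ by a countable-union argument (the paper only invokes Lemma~\ref{proper:a.s.} one vertex at a time and does not spell this out), and you justify why a c\`adl\`ag trajectory into a discrete space has finitely many jumps on compacts rather than simply asserting it.
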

\begin{proof}
By Lemma \ref{proper:a.s.}, for each $v \in U$, the (countable sequence of) jump characteristics $\{(t_k^v,j_k^v,v)\}$ of $X_v$ are a.s. well-defined. It follows that the random measure $P_U$ defined by $P_U(\{(t,j,v)\}) = 1$ if and only if there exists a $k$ such that $t_k^v = t$ and $j_k^v = j$ (equivalently, $\Delta X_v(t) = j$) is a.s. expressible as a sum of a countable number of delta masses and is therefore a well-defined random integer-valued measure. To complete the proof, it remains to prove that $P$ is a point process, or equivalently that $P \in \Nms(\R_+\times\jmps\times U)$ a.s.. Recall from Section \ref{nota} that any bounded set $A \subseteq \R_+\times \jmps\times U$ is a subset of $[a,b]\times \jmps\times W$ for some $0\leq a < b < \infty$ and finite $W \subseteq U$. Note that $P_U([a,b]\times \jmps\times W) = |\jmp{X_W}{[a,b]}| = \sum_{v \in W}|\jmp{X_v}{[a,b]}|$, which is finite since $X_v$ is a $\cad$-valued random element. Therefore, a.s. for any bounded $A \subset \R_+\times \jmps\times U$, $P_U(A) < \infty$ which proves that $P_U$ is a.s. an element of $\Nms(\R_+\times\jmps\times U)$ and is therefore a point process.
\end{proof}

\section{Verification of the General Well-Posedness Assumption}
\label{WP}

In this section we prove Lemma \ref{lem-ass}. 
Specifically, given a graph $G = (V,E)$ that belongs to one of the classes specified in Theorem \ref{res:MRF}, and 
a jump rate function family $\rateset \defeq\{\rate^v_j\}_{v \in V, j \in \jmps}$ that satisfies Assumptions \ref{mod:regular} and \ref{mod:WPbd}, we show that the SDE \eqref{rn:infpart} is well-posed. For SDEs with jump rates that satisfy the additional symmetry (or automorphism invariant) condition stated in \cite[condition 1 of Definition 3.1]{GanRam-Hydro22}, well-posedness would follow from \cite[Theorem 4.2 and Propositions 5.15, 5.17]{GanRam-Hydro22}. Here, we use a simple trick to show that the latter theorem in fact also applies to SDEs with heterogeneous rates such as the modified rate function family $\wh{\rateset}^W \defeq \{\rateref^{W,v}_j\}_{v \in V,j \in\jmps}$ in \eqref{rn:infpart}. The idea is to assume without loss of generality that the vertices of $G$ are labeled by (distinct)
integers in $\N$, that is, we treat $G$ as a \emph{marked graph} in which each vertex of $G$ is equipped with an integer mark that is equal to its label. This ensures that each vertex has a unique mark so that the automorphism group of this new marked graph is trivial, and thus 
the symmetry condition from \cite{GanRam-Hydro22} automatically holds, so the result therein can be directly applied. The details are given below.

\begin{proof}[Proof of Lemma \ref{lem-ass}]
Fix $G$, $(\vms,\vtx)$ and the original jump rate function family $\rateset \defeq\{\rate^v_j\}_{v \in V, j \in \jmps}$ satisfying the stated assumptions. 
We begin by considering the modified rate functions with $W = \emptyset$ so that $\rateref^{W,v}_j = \rate^v_j$ for all $v \in V$ and $j \in \jmps$. Assume without loss of generality that $V\subseteq \N$ and recall from \cite[Section 6.1]{GanRam-Hydro22} that for any Polish space $\Pol$, $\wh{\Gmc}_*\sp{\{1\},\Pol}$ is the space of graphs whose vertex sets are subsets of $\N$ and whose vertices are equipped with marks lying in the space $\Pol$. By \cite[Lemma B.5]{GanRam-Hydro22}, this space is Polish. Then there exists a measurable, injective map $\psi_{\X}: \mksp^V \to \wh{\Gmc}_*\sp{\{1\},\N\times\mksp}$ given by $\psi_{\X}(\dvms) \defeq (G,(\mathbf{k},\dvms))$ where for each $v \in V$, $\mathbf{k}_v = v$. Likewise there exists a measurable, injective map $\psi_{\cad}: (\mksp\times\cad)^V \to \wh{\Gmc}_*\sp{\{1\},\N\times\mksp\times\cad}$ given by $\psi_{\cad}(\dvms,x) = (G,(\mathbf{k},\dvms,x))$ where $\mathbf{k}$ is defined similarly. We now proceed in four steps.

\skipLine
\textbf{Step 1: Construct a regular family of local rate functions in the sense of \cite[Definition 3.1]{GanRam-Hydro22}.}
\skipLine

Let $H = (V_H,E_H,\root_H)\in \wh{\Gmc}_{1,*}$, where $\wh{\Gmc}_{1,*}$ is the space of unmarked, rooted graphs of radius one (i.e. all non-root vertices are adjacent to the root). For each $v \in V$, let $H_v = (G\subg{\cl{v}},v) \in \wh{\Gmc}_{1,*}$ and let $I(H_v,H)$ be the set of (rooted) isomorphisms from $H_v$ to $H$. Then for each $j \in \jmps$, define the measurable function $\wlocrate^{H}_j: \R_+\times \cad^{V_H}\times \N^{V_H}\times \mksp^{V_H}\to \R_+$ by
\begin{equation}
\label{WP:locdef}
\wlocrate^{H}_j(t,x,\mathbf{k},\dvms) \defeq \sum_{v \in V}\sum_{\varphi\in I(H_v,H)}\locrate^v_j(t,(\dvms_{\varphi(u)})_{u \in \cl{v}},(x_{\varphi(u)})_{u \in \cl{v}})\indic{k_w = \varphi^{-1}(w), w \in V_H}
\end{equation}
for $(t,x,\mathbf{k},\dvms) \in \R_+\times\cad^{V_H}\times \N^{V_H}\times \mksp^{V_H}$, where $\locrate^v_j$ is the local jump rate function from condition 1 of Assumption \ref{mod:regular}. Note that for any $H$ and $\mathbf{k}$, at most one term in the double sum above will be non-zero. 

\ind Using \eqref{WP:locdef}, it can be directly verified that $\ov{\rateset} \defeq \{\wlocrate^H_j\}_{H \in \wh{\Gmc}_{1,*},j\in \jmps}$ satisfies condition 1 of \cite[Definition 3.1]{GanRam-Hydro22}. Condition 2 of \cite[Definition 3.1]{GanRam-Hydro22} follows from condition 2 of Assumption \ref{mod:regular}, so $\ov{\rateset}$ is a family of regular local jump rate functions in the sense of \cite[Definition 3.1]{GanRam-Hydro22}. This concludes step 1.

\skipLine
\textbf{Step 2: Show that the pair $(\vms,X)$ satisfies \eqref{mod:infpart}-\eqref{ic} if and only if $\psi_{\cad}(\vms,X)$ satisfies \cite[(3.3)]{GanRam-Hydro22}.}
\skipLine

For any locally finite rooted graph $G' = (V',E',\root',(w',\dvms'))$ with vertex marks in $\N\times\mksp$, define the family of jump rate functions $\rateset^{G'} \defeq \{\rate^{G',v}_j\}_{v \in V',j \in\jmps}$ as in the standing assumption of \cite{GanRam-Hydro22} with respect to the regular family of local jump rate functions $\ov{\rateset}$. Then for any $(t,\dvms,x) \in \R_+\times (\mksp\times \cad)^V$, $v \in V$ and $j \in \jmps$, \eqref{WP:locdef} and condition 1 of Assumption \ref{mod:regular} imply
\begin{equation}
\rate^{\psi_{\X}(\dvms),v}_j(t,x) = \wlocrate^{H_v}(t,x_{\cl{v}},(w)_{w \in \cl{v}},\dvms_{\cl{v}}) = \rate^v_j(t,\dvms,x).
\label{WP:congrates}
\end{equation}
Therefore $\rateset$ can be extended to a family of jump rate functions that satisfy the standing assumption of \cite{GanRam-Hydro22}. Moreover, if $\poiss^G_v = \poiss_v$ for every $v \in V$, it follows that $X$ satisfies \eqref{mod:infpart}-\eqref{ic} for the initial data pair $(\vms,\vtx)$ a.s. if and only if $\psi_{\cad}(\vms,X)$ satisfies equation (3.3) of \cite{GanRam-Hydro22} for the initial data $(\psi_{\X}(\vms),(\vtx(\vms_v))_{v \in V})$.

\skipLine
\textbf{Step 3: Show that \eqref{mod:infpart}-\eqref{ic} is strongly well-posed for $G$, $(\vms,\vtx)$ and $\rateset$.}
\skipLine

Because $G$ is assumed to be either a graph of bounded maximal degree or an a.s. realization of a Galton-Watson tree whose offspring distribution has a finite first moment, $G$ is therefore finitely dissociable in the sense of \cite[Definition 5.11]{GanRam-Hydro22} by \cite[Propositions 5.15 and 5.17]{GanRam-Hydro22}. We have already shown that $\rateset^{\psi_{\X}(\vms)}$ satisfies the standing assumption of \cite{GanRam-Hydro22}. To verify that \cite[Assumption 1]{GanRam-Hydro22} also holds, let $G' = (V',E',\root,(w',\dvms'))$ be any rooted, locally finite graph with vertex marks in $\N\times\mksp$. For any graph $H$, let $|H|$ denote the number of vertices of the graph. Then the standing assumption of \cite{GanRam-Hydro22}, \eqref{WP:locdef}, condition 1 of Assumption \ref{mod:regular} and Assumption \ref{mod:WPbd} imply that for any $v'\in V'$, $j \in \jmps$, $x' \in \cad^{V'}$ and $t \in \R_+$,
\begin{align*}
\rate^{G',v'}_j(t,x') &= \wlocrate^{([G'[\gcl{v'}{G'}]],v')}_j(t,x'_{\cl{v'}},w'_{\cl{v'}},\dvms'_{\cl{v'}})\\
&\leq \sup_{v \in V: |\gcl{v}{G}| = |\gcl{v'}{G'}|}\sup_{(\dvms,x) \in (\mksp\times \cad)^V} \locrate^v_j(t,\dvms_{\cl{v}},x_{\cl{v}})\\
&\leq \sup_{v \in V: |\gcl{v}{G}| = |\gcl{v'}{G'}|}\sup_{(\dvms,x) \in (\mksp\times \cad)^V}\rate^v_j(t,x,\dvms)\\
& \leq C(|\gcl{v'}{G'},t).
\end{align*}
Thus \cite[Assumption 1]{GanRam-Hydro22} holds. Then by \cite[Theorem 4.2]{GanRam-Hydro22}, \eqref{mod:infpart}-\eqref{ic} is strongly well-posed in the sense of \cite[Definition 3.7]{GanRam-Hydro22} for the initial data $(\psi_{\X}(\dvms),(\vtx(\vms_v))_{v \in V})$. This means for any filtration-Poisson process pair $(\filt,\poiss^G)$ in the sense of \cite[Remark 3.5]{GanRam-Hydro22}, there there exists an a.s. unique weak solution to \cite[(3.3)]{GanRam-Hydro22}. By step 2, this implies that for the filtration $\filt$ and Poisson processes $\{\poiss_v\}_{v \in V} \defeq\{\poiss^G_v\}_{v \in V}$, there exists an a.s. unique solution to \eqref{mod:infpart}-\eqref{ic}. Thus, \eqref{mod:infpart}-\eqref{ic} is also strongly well-posed.

\skipLine
\textbf{Step 4: Generalize the result to $W$ non-empty.}
\skipLine

We have shown that the SDE \eqref{mod:infpart}-\eqref{ic} is strongly well-posed when the jump rate function family $\rateset$ satisfies Assumptions \ref{mod:regular} and \ref{mod:WPbd}. However, note that when $\rateset$ satisfies these assumptions, then $\wh{\rateset}^W \defeq \{\rateref^{W,v}_j\}_{v \in V,j\in\jmps}$ also satisfies Assumptions \ref{mod:regular} and \ref{mod:WPbd}. Therefore, \eqref{mod:infpart}-\eqref{ic} is strongly well-posed even when $\rateset$ is replaced by $\wh{\rateset}^W$.
\end{proof}

\bibliographystyle{plain}
\bibliography{reference}
\end{document}